\theoremstyle{plain}
\newtheorem{theorem}{Theorem}[section]
\newtheorem{corollary}[theorem]{Corollary}
\newtheorem{lemma}[theorem]{Lemma}
\newtheorem{remark}[theorem]{Remark}
\newtheorem{definition}[theorem]{Definition}
\begin{document}

\theoremstyle{definition} 

\newtheorem*{notation}{Notation}  

\theoremstyle{plain}      

\def\H{{\mathbb H}}
\def\F{{\mathcal F}}
\def\R{{\mathbb R}}
\def\Q{\hat{\mathbb Q}}
\def\Z{{\mathbb Z}}
\def\E{{\mathcal E}}
\def\N{{\mathbb N}}
\def\X{{\mathcal X}}
\def\Y{{\mathcal Y}}
\def\C{{\mathbb C}}
\def\D{{\mathbb D}}
\def\G{{\mathcal G}}
\def\T{{\mathcal T}}

\title{Ergodicity of the geodesic flow on symmetric surfaces}

\subjclass[2010]{30F20, 30F25, 30F45, 57K20}

\keywords{}
\date{}

\author{Michael Pandazis and Dragomir \v Sari\'c}

\address[Michael Pandazis]{PhD Program in Mathematics, The Graduate Center, CUNY \\ 365 Fifth Ave., N.Y., N.Y., 10016, USA.}
\email{mpandazis@gradcenter.cuny.edu}

 \thanks{The second author was partially supported by the Simons Foundation Collaboration Grant 346391 and by PSCCUNY grants.}

\address[Dragomir \v Sari\' c]{PhD Program in Mathematics, The Graduate Center, CUNY \\ 365 Fifth Ave., N.Y., N.Y., 10016 and\newline Department of Mathematics, Queens College, CUNY\\ 65--30 Kissena Blvd., Flushing, NY 11367, USA.}
\email{Dragomir.Saric@qc.cuny.edu}

\maketitle

\begin{abstract}
We consider conditions on the Fenchel-Nielsen parameters of a Riemann surface $X$ that guarantee the surface $X$ is of parabolic type. An interesting class of Riemann surfaces for this problem is the one with finitely many topological ends. In this case the length part of the Fenchel-Nielsen coordinates can go to infinity for {parabolic $X$}.  When the surface $X$ is end symmetric, we prove that {$X$ being parabolic} is equivalent to the covering group being of the first kind. Then we give necessary and sufficient conditions on the Fenchel-Nielsen coordinates of a half-twist symmetric surface $X$ such that {$X$ is parabolic}. As an application, we solve an open question from the prior work of Basmajian, Hakobyan and the second author.
\end{abstract}

\section{Introduction}

A Riemann surface $X$ is of {\it parabolic type} if it admits no Green's function (see Ahlfors-Sario \cite{AhlforsSario}). 
A Green's function on $X$ is a harmonic function $u$ with the logarithmic singularity at a single point of $X$ such that $\lim_{z\to\partial X}u(z)=0$ (\cite{AhlforsSario}). The class of parabolic Riemann surfaces is denoted by $O_G$ and the reader should not be confused with the fact that most of these surfaces support a hyperbolic metric.

The function theoretic property of {$X$ being parabolic} has deep connections with other natural properties of Riemann surfaces. To name a few, {$X$ being parabolic} is equivalent to \cite{Nevanlinna:criterion,AhlforsSario,Agard,Tukia,Nicholls1,Sullivan,Astala-Zinsmeister,Bishop,Fernandez-Melian,Saric}:
\begin{itemize} 
\item the geodesic flow on the unit tangent bundle $T^1(X)$ of $X$ is ergodic,
\item the boundary at infinity has zero harmonic measure,
\item the Poincar\'e series $\sum_{\gamma \in\Gamma} e^{-d(z,\gamma (z))}$ is divergent, where $X=\mathbb{H}/\Gamma$, $d(\cdot ,\cdot )$ is the hyperbolic distance in $\mathbb{H}$ and $z\in\mathbb{H}$,
\item the Brownian motion on $X$ is recurrent,
\item the limit set of a quasiconformal deformation of $\Gamma$ has {Bowen's} property, and
\item almost every horizontal trajectory of every finite area holomorphic quadratic differential on $X$ is recurrent.
\end{itemize}

The type problem for Riemann surfaces is a question of determining whether a Riemann surface given by an explicit construction which usually depends on some countable family of parameters is {parabolic}. This problem has been extensively studied by many authors when the Riemann surfaces were naturally defined by either gluing construction along the slits or other constructions motivated by complex analysis considerations (for example, see Ahlfors-Sario \cite{AhlforsSario} and Milnor \cite{Milnor}). Determining whether {$X$ is parabolic} can be a challenging problem for a specific construction.

Basmajian, Hakobyan and  the second author \cite{BHS} used the Fenchel-Nielsen coordinates to determine the type of a Riemann surface. There is a dichotomy in the flavor of the results depending on the number of topological ends of the surface $X$. A Riemann surface $X_C$ with a Cantor set of ends admits no Green's function if each boundary geodesic at the level $n$ has length bounded above by $\frac{n}{2^n}$(see \cite[Theorem 10.3]{BHS}). Due to the presence of a large space of ends, McMullen \cite{McMullen} proved that if all boundary geodesics of the pants decomposition of $X_C$ have lengths between two positive constants then {$X_C$ is not parabolic}. The second author \cite[Theorem 7.4]{Saric} proved that {$X_C$ is not parabolic} when the lengths of geodesics at the level $n$ are at least $\frac{n^r}{2^n}$ for $r>2$. Notice that {$X_C$ is parabolic} for small $\ell_n$ {independent} of the {choice} of twists because twisting along geodesics with lengths bounded above is a quasiconformal deformation which preserves parabolicity.

A Riemann surface $X_f$ with countably many punctures that accumulate to a single topological end is called a {\it flute surface} (see Basmajian \cite{Basmajian}). We denote by $\{\alpha_n\}_{n=1}^{\infty}$ the geodesic boundaries of a fixed pants decomposition of $X_f$ (see Figure 2). Let $\ell_n$ be the length of $\alpha_n$ and let $t_n$ be the twist on $\alpha_n$. A flute surface is called a {\it zero-twist flute surface} if $t_n=0$ for all $n$. 
Basmajian, Hakobyan and the second author \cite{BHS} proved the following for a zero-twist flute surface $X_f^0=\mathbb{H}/\Gamma$:
the covering Fuchsian group $\Gamma$ is of the first kind if and only if  $\sum_{n=1}^{\infty}e^{-\frac{\ell_n}{2}} =\infty$ if and only if {$X_f^{0}$ is parabolic}.

Our first result is a necessary and sufficient condition for symmetric surfaces with small number of ends to be {parabolic}. A surface is {\it symmetric} if it can be decomposed into two parts by the union of simple closed geodesics and bi-infinite geodesics that are interchanged by an orientation reversing isometry. A flute surface is symmetric if twists $t_n$ belong to the set $\{0,\frac{1}{2}\}$; in particular, zero-twist ($t_n=0$ for all $n$)  and half-twist ($t_n=1/2$ for all $n$) flute surfaces are symmetric. We prove (see Theorem \ref{thm:equiv-par-com})

\begin{theorem}
Let $X_f=\mathbb{H}/\Gamma$ be a flute surface with $t_n\in\{ 0,\frac{1}{2}\}$ for all $n$. Then {$X_f$ is parabolic} if and only if $\Gamma$ is of the first kind.
\end{theorem}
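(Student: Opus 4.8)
The plan is to prove the two implications separately, the forward one being soft and the converse carrying all the content.

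For ``parabolic $\Rightarrow$ first kind'' I would argue by contraposition, using the equivalences recalled in the introduction. If $\Gamma$ were of the second kind, its domain of discontinuity on $\partial\mathbb{H}$ would be nonempty, so $X_f$ would contain a funnel, i.e.\ an ideal boundary arc of positive harmonic measure. Solving a Dirichlet problem across that arc produces a bounded harmonic function, hence a Green's function; equivalently, the geodesics escaping into the funnel form a flow-invariant set of positive Liouville measure on which the motion is not recurrent, so the geodesic flow on $T^1(X_f)$ is not ergodic and the Brownian motion is transient. Either way $X_f$ is not parabolic. Thus parabolicity forces $\Gamma$ to be of the first kind, and this direction uses nothing about the twists.

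The substantive direction is ``first kind $\Rightarrow$ parabolic'', and here I would exploit the symmetry encoded by $t_n\in\{0,\frac{1}{2}\}$. These twist values are exactly those for which $X_f$ admits an orientation-reversing isometric involution $\sigma$: reflection fixes the decomposing geodesics $\alpha_n$ together with the seams in the zero-twist pieces, while in the half-twist pieces the reflection is post-composed with the half-twist, and the two prescriptions patch along the common $\alpha_n$. Writing $\mathcal{G}=\mathrm{Fix}(\sigma)$, a union of complete geodesics, realizes $X_f$ as the double $DX^{+}$ of a bordered surface $X^{+}$ along $\mathcal{G}$; lifting $\sigma$ to an anti-conformal involution of $\mathbb{H}$ normalizing $\Gamma$, with fixed geodesic the imaginary axis. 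The gain from doubling is that, since a flute surface has genus zero, the locus $\mathcal{G}$ cuts it into a planar half $X^{+}$, on which the type problem becomes a classical capacity computation.

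I would then run a reflection (folding) argument. The folding map $X_f\to X^{+}=X_f/\sigma$ carries Brownian motion on $X_f$ to reflecting Brownian motion on $X^{+}$ (Neumann conditions along $\mathcal{G}$), so $X_f$ is parabolic if and only if the reflected motion on $X^{+}$ is recurrent, if and only if the ideal boundary $\partial_\infty X^{+}\setminus\mathcal{G}$ has zero capacity for the reflected Dirichlet problem. Equivalently, one symmetrizes a hypothetical Green's function on $X_f$ by $\sigma$ to obtain a $\sigma$-invariant one whose normal derivative vanishes across $\mathcal{G}$, and reads it as a reflected Green's function on $X^{+}$. The remaining task is to show that this capacity is zero exactly when $\Gamma$ is of the first kind: I would map the planar half $X^{+}$ conformally onto a standard slab or half-plane and estimate the modulus of the curve family separating a fixed compact set from the single accumulating end, expressing it through the geometry of the $\alpha_n$. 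The resulting series is the one whose divergence characterizes the first kind, recovering $\sum_n e^{-\ell_n/2}=\infty$ in the zero-twist normalization and the appropriate twist-corrected series otherwise.

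The main obstacle is precisely this last capacity estimate. The delicate point is that ``first kind'' only asserts that the limit set is all of $\partial\mathbb{H}$ (empty domain of discontinuity), which for infinitely generated groups is in general strictly weaker than parabolicity; the symmetry is exactly what upgrades it. Concretely, I expect the work to lie in controlling the harmonic measure of the accumulating end of the planar half $X^{+}$ by extremal-length comparisons adapted to the reflecting boundary $\mathcal{G}$, and in verifying that the half-twist normalization does not alter the conclusion even though it changes the explicit length condition.
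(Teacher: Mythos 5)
Your symmetrization step is essentially the paper's: the twists in $\{0,\frac12\}$ do produce a global orientation-reversing isometry, and your folding of Brownian motion onto the quotient $X^{+}$ with Neumann conditions along $\mathrm{Fix}(\sigma)$ is the probabilistic twin of the paper's extremal-length comparison (Theorem \ref{thm:symm-parabolic}, where an allowable metric on the front side is doubled by reflection). The forward direction via the funnel is also fine. The gap is in your final step, where all the content lives.

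You propose to finish by mapping $X^{+}$ to a standard domain, expressing the capacity of the accumulating end as a series in the $\ell_n$, and matching it against ``the series whose divergence characterizes the first kind,'' which you identify as $\sum_n e^{-\ell_n/2}$ in the zero-twist case ``and the appropriate twist-corrected series otherwise.'' This cannot work as stated. For half-twist flutes, $\sum_n e^{-\ell_n/2}=\infty$ does not characterize the first kind: the paper's Hakobyan-slice examples are first kind and parabolic while even $\sum_n e^{-\ell_n/4}<\infty$. The correct series there is $\sum_n e^{-\sigma_n/2}$ with $\sigma_n=\ell_n-\ell_{n-1}+\cdots$ an alternating sum, its identification requires a monotonicity hypothesis on the $\ell_n$, and proving that its divergence is equivalent to the first-kind condition is the hardest part of the paper (Theorem \ref{thm:half-twist-O_G}, via shear coordinates and piecewise horocyclic paths in the Appendix). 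For mixed twists no such closed-form series is produced at all. So your route would require solving a strictly harder problem than the theorem you are proving. What you are missing is that no series is needed: because the flute is planar and the fixed locus consists of arcs joining the cusps, the front side $X^{+}$ is \emph{simply connected}, hence embeds in $\mathbb{D}$ as a single infinite-sided ideal polygon. The group is of the first kind exactly when the sides of this polygon accumulate, besides the ideal vertices, at a single point of $S^1$ (two accumulation points would leave a free arc of discontinuity). In that case the curve families $\Gamma_n^{*}$ degenerate to a family of curves through one boundary point, which has zero modulus by a classical fact; Theorem \ref{thm:symm-parabolic} then gives $\mathrm{mod}\,\Gamma_n\to 0$ and parabolicity. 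This soft dichotomy, not a capacity series, is the mechanism that upgrades ``first kind'' to ``parabolic'' here.
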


Given a compact exhaustion $\{X_k\}$ of a Riemann surface $X$, an end of the surface is represented by a nested sequence $\{C_K\}$ of open subsets of $X$, where $C_k$ is a connected component of $X - X_k$. We say such an end is {\it accumulated by genus} if each $C_k$ corresponding to the end has positive genus. A Riemann surface $X$ with finitely many ends accumulated by genus is {\it end symmetric} if each end surface, which is a bordered surface with one closed geodesic  on its boundary and one topological end, is symmetric in the above sense (see Figures 5 and 6). For each handle in an end surface we choose a simple closed geodesic $\beta_n$ that cuts off the handle from the end surface.  In the torus complement of $\beta_n$, we choose a simple closed geodesic $\gamma_n$. We assume that the lengths $\ell (\beta_n)$ and $\ell (\gamma_n)$ are between two positive constants. 
We prove that $X=\mathbb{H}/\Gamma$ {being parabolic} is equivalent to $\Gamma$ being of the first kind for an end symmetric Riemann surface $X$ (see Theorem \ref{thm:symmetric-finite-ends}). 

\begin{theorem}
Let $X=\mathbb{H}/\Gamma$ be a Riemann surface that is end symmetric with the lengths $\ell (\beta_n)$ and $\ell (\gamma_n)$ between two positive constants. Then {$X$ is parabolic} if and only if $\Gamma$ is of the first kind.
\end{theorem}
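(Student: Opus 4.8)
The two implications are of quite different natures, and I would separate them at once. The direction that parabolicity forces $\Gamma$ to be of the first kind needs neither symmetry nor the length bounds: if $\Gamma$ were of the second kind, its limit set $\Lambda(\Gamma)$ would omit an open arc $I\subset\partial\mathbb{H}$, and every geodesic with both ideal endpoints in $I$ would project to a non-recurrent geodesic on $X$. The corresponding set of unit tangent vectors is open and $\Gamma$-invariant, hence of positive Liouville measure, so the geodesic flow on $T^1(X)$ is not ergodic; by the equivalences recalled in the introduction $X$ is then not parabolic (equivalently, the funnel over $I$ carries a Green's function). This disposes of ``parabolic $\Rightarrow$ first kind''.

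For the converse I assume $\Gamma$ is of the first kind and aim to show $X$ is parabolic. Parabolicity is a property of the ideal boundary and $X$ has only finitely many ends, so it suffices to treat one end at a time and show that its ideal boundary has zero harmonic measure. Fixing an end, I would cut it along the separating geodesics $\beta_n$ into a base piece and a chain of one-holed tori carrying the curves $\gamma_n$. Because $\ell(\beta_n),\ell(\gamma_n)\in[c,C]$, the collar lemma furnishes collars of definite modulus about every $\beta_n$ and exhibits each one-holed torus as a piece of uniformly bounded geometry, while the orientation-reversing involution realizing end symmetry restricts compatibly to the whole chain.

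The plan is then to model the end by a symmetric flute surface $Y$: opening each handle along $\gamma_n$ turns the chain of one-holed tori into a chain of bounded pieces whose amalgamation is a flute whose pants curves are the $\beta_n$ and whose twists, forced by the reflection, lie in $\{0,\tfrac12\}$. Performing this surgery across the definite collars changes each piece by a quasi-isometry with constants depending only on $c$ and $C$, so it distorts the extremal length of the family of curves escaping to the ideal boundary by at most a bounded multiplicative factor; hence the end of $X$ has zero harmonic measure iff $Y$ is parabolic, and likewise $\Gamma$ is of the first kind iff the covering group of $Y$ is. Theorem~\ref{thm:equiv-par-com} supplies the equivalence for the model $Y$, and transporting it back along the comparison yields the theorem.

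The crux --- and the step I expect to be the main obstacle --- is precisely what separates this setting from the Cantor-set case, where McMullen's theorem shows that bounded lengths alone need not give parabolicity. One must produce uniform, $n$-independent control on how much harmonic measure can leak through a single bounded handle and then sum these contributions coherently along the infinite chain; the reflection across the symmetry locus is what prevents a positive-measure family of geodesics from escaping out the end while $\Lambda(\Gamma)$ stays topologically full, i.e.\ it is what upgrades ``first kind'' to divergence type. Checking that the model flute $Y$ faithfully records the kind of $\Gamma$ is the second delicate point, and it is again the uniform collar estimates, together with the linear (unbranched) structure of a single end, that make it go through.
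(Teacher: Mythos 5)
Your first direction (parabolic $\Rightarrow$ first kind) is fine and matches what the paper takes for granted. The gap is in the converse, at exactly the step you flag as the crux. The proposed reduction to a model flute $Y$ by ``opening each handle along $\gamma_n$'' is not a quasiconformal (or even topological) comparison: cutting a one-holed torus along $\gamma_n$ produces a pair of pants, so the surgery changes the homeomorphism type of the end, and there is no map between $X_i$ and $Y$ whose quasiconformality you could invoke to say extremal length is distorted by a bounded factor. The sentence ``changes each piece by a quasi-isometry \dots so it distorts the extremal length \dots by at most a bounded multiplicative factor'' is therefore unjustified, and the whole transport of Theorem \ref{thm:equiv-par-com} back to $X$ collapses. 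A secondary symptom that the model is off: you name the $\beta_n$ (which have bounded lengths) as the pants curves of $Y$, whereas the curves governing escape to the end are the $\alpha_n$, whose lengths are unbounded and which carry the relevant twist data; a flute with bounded cuffs would be unconditionally parabolic and could not detect the kind of $\Gamma$.

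The paper avoids any such surgery. After reducing to the front side $X_i^*$ via the symmetry comparison (Theorem \ref{thm:symm-parabolic}), it observes that $X_i^*\setminus(\cup_n\gamma_n)$ is simply connected, so its lift to $\mathbb{D}$ is an infinite ideal polygon accumulating at a single boundary point when $\Gamma$ is of the first kind; curve families converging to a point have vanishing modulus. The curves that do enter the handles are then handled by constructing an explicit quasiconformal embedding (not a homeomorphism of surfaces, but a $K$-quasiconformal map of $X_i^*$ \emph{into} the simply connected piece) that compresses the front half of each torus into a half-collar about $\beta_n$; the uniform bounds on $\ell(\beta_n)$ and $\ell(\gamma_n)$ give a uniform $K$, and monotonicity plus quasi-invariance of modulus finishes the argument. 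If you want to salvage your outline, replace the cut-and-reassemble step with such a quasiconformal embedding into a simply connected subsurface; as written, the key comparison does not exist.
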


The above two theorems reduce the question whether $X=\mathbb{H}/\Gamma$ is parabolic to the question whether $\Gamma$ is of the first kind for {a flute surface $X$ with only zero or half twists and for} an end symmetric Riemann surface $X$.

A flute surface $X_f^{1/2}$ is called a {\it half-twist flute surface} if $t_n=\frac{1}{2}$ for all $n$.
Basmajian, Hakobyan and the second author (see \cite[Section 9.1]{BHS}) used a semi-local argument (the countable sum of the moduli  of the union of two non-standard half-collars) to establish that 
\begin{equation}
\label{eq:half-twist-parab-sum}
\sum_{n=1}^{\infty}e^{-\frac{\ell_n}{4}}=\infty
\end{equation} implies that $X_f^{1/2}$ {is parabolic}. 
Under the additional conditions that $\ell_n$ is increasing and concave, the condition (\ref{eq:half-twist-parab-sum}) is equivalent to both $X_f^{1/2}$ {being parabolic} and $\Gamma$ {being} of the first kind (see \cite[Theorem 9.7]{BHS}).  For $\sigma_n=\ell_n-\ell_{n-1}+\cdots +(-1)^{n-1}l_1$, if
\begin{equation}
\label{eq:non-complete-1/2flute}
\sum_{n=1}^{\infty}e^{-\frac{\sigma_n}{2}}<\infty
\end{equation}
then $\Gamma$ is of the second kind and necessarily $X_f^{1/2}$ {is not parabolic} (see \cite[Theorem 9.6]{BHS}). 

To illustrate the difference between (\ref{eq:half-twist-parab-sum}) and (\ref{eq:non-complete-1/2flute}) when {the} $\ell_n$ are increasing but not concave, we define {the} {\it Hakobyan slice} (see \cite[Example 9.9]{BHS}) to consist of {a} two parameter family of half-twist flute surfaces $X_{a,b}$, with $a>0$ and $b>0$, and
$$
\ell_{2n}=a\ln (n+1)+b\ln n ,\ \ \ell_{2n+1}=(a+b)\ln (n+1).
$$
In the first quadrant of the $ab$-plane (Figure 1), the two regions $X_{a,b}=?$  satisfy {n}either property (\ref{eq:half-twist-parab-sum}) {n}or (\ref{eq:non-complete-1/2flute}). An open question was to decide the nature of the surfaces in the regions $X_{a,b}=?$, i.e. where $a+b >4$ and $min\{a,b\}\le2$ (see \cite[Question 1.9]{BHS}).

 \begin{figure}[h]
\leavevmode \SetLabels
\L(.55*.65) $X_{a,b}$ incomplete\\
\L(.26*.2) $X_{a,b}\in O_G$\\
\L(.6*.2) $X_{a,b}\in \ ?$\\
\L(.24*.7) $X_{a,b}\in \ ?$\\
\L(.88*.06) $a$\\
\L(.16*.95) $b$\\
\endSetLabels
\begin{center}
\AffixLabels{\centerline{\epsfig{file= 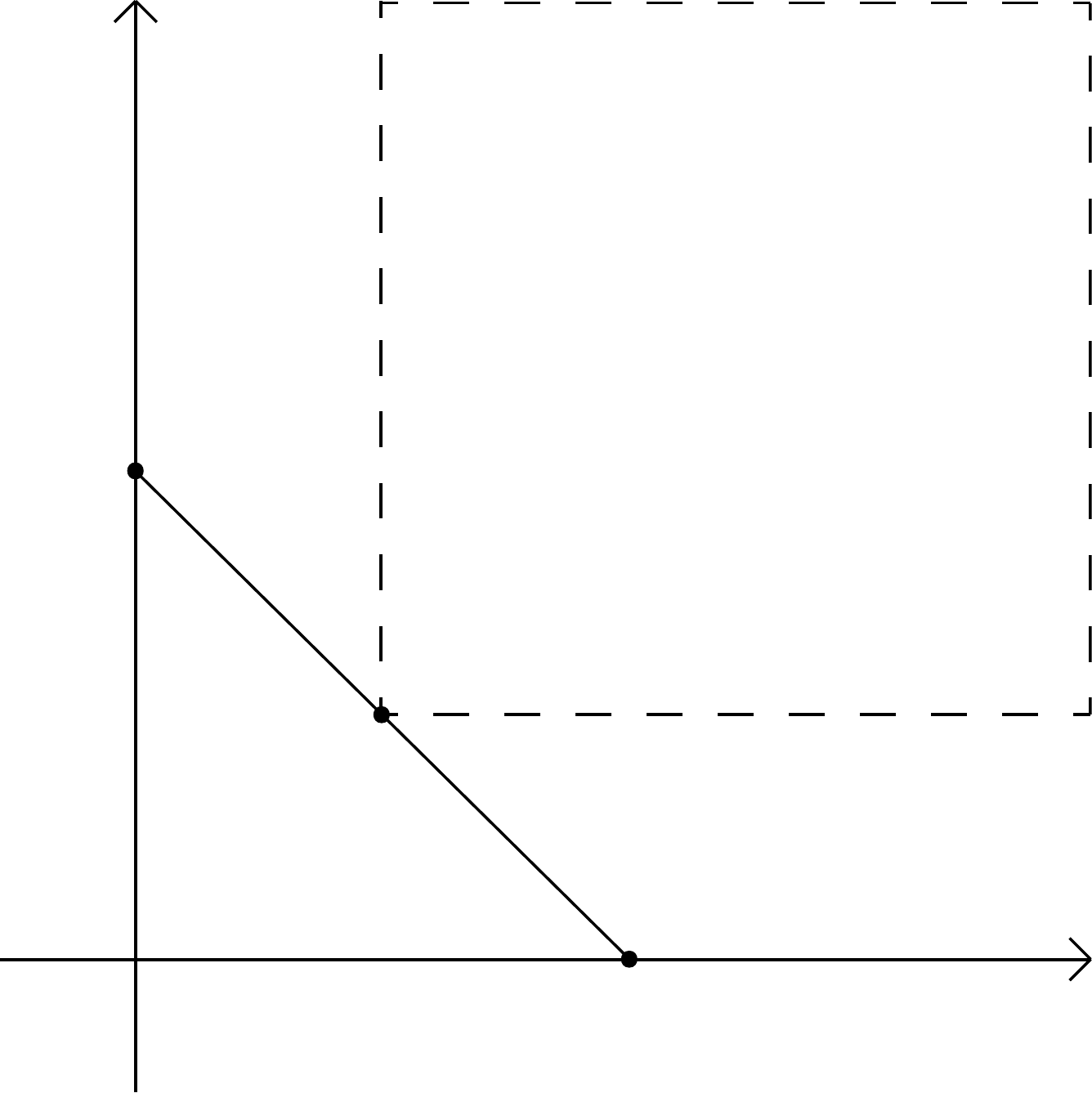,width=4in,height=3in,angle=0}}}
\vspace{-20pt}
\end{center}
\caption{A slice of flutes $X_{a,b}$.}
\end{figure}

We prove (see Theorem \ref{thm:half-twist-O_G})
\begin{theorem}
\label{thm:sum-flute-1/2}
Assume that  the lengths $\ell_n$ of the boundary geodesics of the pants decomposition of a half-twist flute surface $X_f^{1/2}=\mathbb{H}/\Gamma$ are increasing. Then the following are equivalent.
\begin{enumerate}
\item the covering group $\Gamma$ of $X_f^{1/2}$ is of the first kind,
\item $\sum_{n=1}^{\infty}e^{-\frac{\sigma_n}{2}}=\infty$, and
\item $X_f^{1/2}$ {is parabolic}.
\end{enumerate}
\end{theorem}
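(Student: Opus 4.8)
The plan is to deduce the three-way equivalence from the pieces already available, isolating the one genuinely new implication. First, since a half-twist flute surface has $t_n=\frac12\in\{0,\frac12\}$ for every $n$, Theorem \ref{thm:equiv-par-com} applies verbatim and gives (1)$\iff$(3): $\Gamma$ is of the first kind precisely when $X_f^{1/2}$ is parabolic. Next, the contrapositive of \cite[Theorem 9.6]{BHS}, recalled in \eqref{eq:non-complete-1/2flute}, yields (1)$\Rightarrow$(2): if $\Gamma$ is of the first kind then $\sum_n e^{-\sigma_n/2}=\infty$, and this needs no monotonicity hypothesis. Thus the whole theorem reduces to the single implication (2)$\Rightarrow$(1): assuming $\ell_n$ increasing and $\sum_n e^{-\sigma_n/2}=\infty$, show that $\Gamma$ is of the first kind. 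I would note in passing that this cannot be routed through \eqref{eq:half-twist-parab-sum}, since one checks that $\ell_n$ can be made to increase so fast that $\sum_n e^{-\ell_n/4}<\infty$ while $\sigma_n$ stays bounded along a subsequence (so $\sum_n e^{-\sigma_n/2}=\infty$); a genuinely new argument is therefore required.

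To prove (2)$\Rightarrow$(1) I would develop the flute in $\mathbb{H}$ and track the lifts of the cuffs. Fixing a base pair of pants, lift the cuffs to a sequence of pairwise disjoint geodesics $\tilde\alpha_1,\tilde\alpha_2,\dots$ in $\mathbb{H}$, where $\tilde\alpha_{n+1}$ together with $\tilde\alpha_n$ bounds a lift of the pair of pants $P_n$ cut off by $\alpha_n$ and $\alpha_{n+1}$, so that the closed shadows $I_n\subset\partial\mathbb{H}$ cut off by $\tilde\alpha_n$ on the side toward the end form a nested sequence $I_1\supset I_2\supset\cdots$. For a flute every cuff exits the single topological end, so the only possible component of $\partial\mathbb{H}\setminus\Lambda(\Gamma)$ is the interior of $I_\infty:=\bigcap_n I_n$; hence $\Gamma$ is of the first kind if and only if $|I_n|\to 0$. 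The task is thus to show that $\sum_n e^{-\sigma_n/2}=\infty$ forces $|I_n|\to 0$.

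The heart of the matter is a recursion for the Euclidean sizes $|I_n|$. Passing from $\tilde\alpha_n$ to $\tilde\alpha_{n+1}$ is governed by the hyperbolic geometry of $P_n$, with cuff lengths $\ell_n,\ell_{n+1}$, together with the half-twist used to glue $P_n$ to $P_{n-1}$. A computation in hyperbolic trigonometry (normalize $\tilde\alpha_n$ and use the right-angled hexagon and half-collar relations for $P_n$) gives, up to multiplicative constants, a contraction $|I_{n+1}|=|I_n|\,(1-c_n)$ with $c_n\asymp e^{-\sigma_n/2}$; the alternating sum $\sigma_n=\ell_n-\ell_{n-1}+\cdots+(-1)^{n-1}\ell_1$ appears because each half-twist reflects the foot of the common perpendicular to the opposite side of the next cuff, so the signed displacements along the crossing geodesic telescope with alternating signs. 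Iterating gives $|I_n|=|I_1|\prod_{k<n}(1-c_k)$, and since $c_k\in(0,1)$ this product tends to $0$ exactly when $\sum_k c_k$ diverges, i.e.\ when $\sum_k e^{-\sigma_k/2}=\infty$. The increasing hypothesis on $\ell_n$ enters precisely here: it guarantees $\sigma_n\ge 0$, so the $c_n$ are genuine contraction factors and the developed configuration stays nested and untangled, which is what validates the clean recursion.

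The main obstacle is this last geometric estimate, namely establishing $|I_{n+1}|\asymp |I_n|(1-c_n)$ with $c_n\asymp e^{-\sigma_n/2}$ and, in particular, carrying out the bookkeeping of the half-twists that converts the cumulative length into the alternating sum $\sigma_n$; for the implication at hand only the lower bound $c_n\gtrsim e^{-\sigma_n/2}$ is actually needed. Everything else is formal: the reduction via Theorem \ref{thm:equiv-par-com} and \cite[Theorem 9.6]{BHS}, and the elementary fact that $\prod_k(1-c_k)\to 0$ if and only if $\sum_k c_k=\infty$ for $c_k\in(0,1)$.
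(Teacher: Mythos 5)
Your reduction of the theorem to the single implication (2)$\Rightarrow$(1) is correct and matches the paper's structure: Theorem \ref{thm:equiv-par-com} gives (1)$\iff$(3), the contrapositive of \cite[Theorem 9.6]{BHS} gives (1)$\Rightarrow$(2), and your side remark that \eqref{eq:half-twist-parab-sum} cannot be routed around is also right. The gap is that your proof of (2)$\Rightarrow$(1) rests entirely on the asserted recursion $|I_{n+1}|=|I_n|(1-c_n)$ with $c_n\asymp e^{-\sigma_n/2}$, and this is not a routine computation in hyperbolic trigonometry: it is essentially a quantitative restatement of the theorem, and it is exactly the step the paper identifies as the difficulty and spends Section 5 and the Appendix proving. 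Note first that the ratio $|I_{n+1}|/|I_n|$ is not determined by the pair of pants $P_n$ alone: normalizing $\tilde\alpha_n$ destroys the normalization in which $|I_n|$ was measured, so no local hexagon or half-collar identity for $P_n$ can by itself produce the factor $e^{-\sigma_n/2}$, which depends on all of $\ell_1,\dots,\ell_n$. What must be propagated from step to step is the position of the foot of the common perpendicular $\eta_n$ along $\widetilde{\alpha_n}$ relative to the previous foot (offset by $\ell_n/2$ because of the half-twist), and converting these accumulated longitudinal offsets into a definite contraction of the shadow interval is the whole content of the proof. The naive lower bound via perpendicular distances gives only $d(\widetilde{\alpha_1},\widetilde{\alpha_n})\ge\sum_{k<n}\ell(\eta_k)\asymp\sum_{k<n}e^{-\ell_k/2}$ by \eqref{eq:eta_n}, and this sum can converge while $\sum_k e^{-\sigma_k/2}$ diverges (e.g.\ $\sigma_n$ bounded along a subsequence with $\ell_n\to\infty$, as in the Hakobyan slice); so the escape of the $\widetilde{\alpha_n}$ to the boundary, i.e.\ the lower bound $c_n\gtrsim e^{-\sigma_n/2}$ that you yourself flag as the only thing needed, is driven entirely by the sideways sliding and is a genuinely global estimate that you have not supplied.

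For comparison, the paper avoids estimating $|I_n|$ directly (the authors call that approach prohibitive). Instead they pass to the simply connected front side $X^*$, interpolate an extra geodesic $g_{2n}$ between consecutive lifts $g_{2n-1}\supset\widetilde{\alpha_n}$ so as to obtain a nested family of wedges, compute the shears of this family (Lemmas \ref{lem:shear-even} and \ref{lem:shear-odd}, where the half-twist offsets $\pm\ell_n/2$ and the Lambert quadrilateral terms enter), and then use the criterion that a nested family escapes to a single boundary point if and only if the associated piecewise horocyclic path has infinite length (Propositions A.1 and A.3), the length being an explicit exponential of alternating partial sums of shears. The telescoping that produces $\sigma_n$ happens there, in \eqref{eq:sum-4n-est} and \eqref{eq:sum-4n+1-est-final}. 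To salvage your route you would have to prove the contraction estimate from scratch, at which point you would be reconstructing an equivalent of that shear/horocycle computation; as written, the key estimate is assumed rather than proved.
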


A direct corollary to the above theorem is that the half-twist surfaces in the two regions $X_{a,b}=?$ are parabolic which implies the existence of parabolic half-twist surfaces  for which $\sum_{n=1}^{\infty}e^{-\frac{\ell_n}{4}}<\infty$ (thus the non-concave lengths impose a different geometric condition for parabolic surfaces and the sufficient condition for parabolicity from \cite{BHS} is far from necessary).

\begin{corollary}
In the slice $X_{a,b}$ above, the Riemann surface $X_{a,b}$ has Fuchsian covering group of the second kind if and only if $\min\{ a,b\}>2$. Moreover, $X_{a,b}$ is parabolic if and only if $\min\{ a,b\}\leq 2$.
\end{corollary}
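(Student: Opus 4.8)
The plan is to reduce the corollary entirely to Theorem \ref{thm:sum-flute-1/2} together with an explicit computation of the alternating sums $\sigma_n$ for the Hakobyan slice. Since for the slice the lengths $\ell_n$ turn out to be increasing (verified below), Theorem \ref{thm:sum-flute-1/2} gives that $\Gamma$ being of the first kind, $X_{a,b}$ being parabolic, and $\sum_n e^{-\sigma_n/2}=\infty$ are all equivalent. Consequently $\Gamma$ is of the second kind if and only if $\sum_n e^{-\sigma_n/2}<\infty$, while $X_{a,b}$ is parabolic if and only if the same series diverges. As the two assertions of the corollary are exact negations of one another in terms of $\min\{a,b\}$, it suffices to establish the single statement that $\sum_n e^{-\sigma_n/2}<\infty$ if and only if $\min\{a,b\}>2$.

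To carry this out, first I would record the consecutive differences $\ell_{2n}-\ell_{2n-1}=a\ln\frac{n+1}{n}$ and $\ell_{2n+1}-\ell_{2n}=b\ln\frac{n+1}{n}$, both positive for $a,b>0$, which confirms that $\ell_n$ is increasing and hence that Theorem \ref{thm:sum-flute-1/2} applies. Next I exploit the defining relation $\sigma_m=\ell_m-\sigma_{m-1}$, which yields the second-order telescoping identity $\sigma_m-\sigma_{m-2}=\ell_m-\ell_{m-1}$. Summing this identity separately over even and over odd indices, and using that $\sum_{k=1}^{n}\ln\frac{k+1}{k}=\ln(n+1)$ telescopes, I obtain $\sigma_{2n+1}=b\ln(n+1)+O(1)$ and $\sigma_{2n}=a\ln(n+1)+O(1)$, where the bounded error terms arise only from the finitely many initial lengths and are irrelevant to convergence.

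With these asymptotics in hand, the series splits as $\sum_n e^{-\sigma_n/2}=\sum_n e^{-\sigma_{2n}/2}+\sum_n e^{-\sigma_{2n+1}/2}$, and up to a fixed multiplicative constant the summands are comparable to $(n+1)^{-a/2}$ and $(n+1)^{-b/2}$ respectively. Hence the even part converges exactly when $a>2$ and the odd part converges exactly when $b>2$, so the full series converges if and only if both $a>2$ and $b>2$, that is, if and only if $\min\{a,b\}>2$; equivalently, it diverges precisely when $\min\{a,b\}\le 2$. Feeding this back through Theorem \ref{thm:sum-flute-1/2} yields that $\Gamma$ is of the second kind iff $\min\{a,b\}>2$ and that $X_{a,b}$ is parabolic iff $\min\{a,b\}\le 2$. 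The only genuine work is the bookkeeping of the second paragraph: the alternating structure of $\sigma_n$ must be unwound carefully so that the $a$-contributions and the $b$-contributions are correctly separated between even and odd indices, since a slip there would interchange the roles of $a$ and $b$. Everything else is a routine $p$-series comparison.
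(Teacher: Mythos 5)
Your proposal is correct and follows essentially the same route as the paper: verify that $\ell_n$ is increasing, invoke Theorem \ref{thm:half-twist-O_G}, and reduce everything to the asymptotics of $\sigma_n$, which give $\sum_n e^{-\sigma_n/2}\asymp\sum_k k^{-\min(a,b)/2}$. The only difference is that the paper cites this last computation from \cite[Example 9.9]{BHS}, whereas you carry it out explicitly via the recursion $\sigma_m=\ell_m-\sigma_{m-1}$ and telescoping, and your bookkeeping ($\sigma_{2n}=a\ln(n+1)+O(1)$, $\sigma_{2n+1}=b\ln(n+1)+O(1)$) checks out.
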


Let $X^{1/2}=\mathbb{H}/\Gamma$ be a Riemann surface with finitely many ends accumulated by genus such that in each end surface the twists around the boundary geodesics $\alpha_n$ are equal to $1/2$. Let $\beta_n$ be geodesics that cut off handles and let $\gamma_n$ be the closed geodesic in each torus complement of $\beta_n$. We assume that the lengths $\ell (\beta_n)$ and $\ell (\gamma_n)$ are between two positive constants. Then we have (see Theorem \ref{thm:symmetric-1/2-finite-ends})

\begin{theorem}
\label{thm:finite-ends-1/2}
Let $X^{1/2}$ be a Riemann surface with finitely many ends whose twists are $1/2$ in each end as above. Assume that the lengths $\ell (\beta_n)$ and $\ell (\gamma_n)$ are between two positive constants and let $\ell_n$ be the length of $\alpha_n$.
Then the following are equivalent.
\begin{enumerate}
\item the covering group $\Gamma$ of $X^{1/2}$ is of the first kind,
\item $\sum_{n=1}^{\infty}e^{-\frac{\sigma_n}{2}}=\infty$, and
\item $X^{1/2}$ is parabolic.
\end{enumerate}
\end{theorem}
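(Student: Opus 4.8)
The plan is to combine the qualitative equivalence already available from Theorem \ref{thm:symmetric-finite-ends} with the quantitative flute estimate of Theorem \ref{thm:sum-flute-1/2}. First I would observe that the implication $(1)\Leftrightarrow(3)$ requires essentially no new work: since every twist in each end of $X^{1/2}$ equals $1/2$, each end surface carries the orientation reversing isometry interchanging the two halves of the half-twisted pieces, so $X^{1/2}$ is end symmetric in the sense of the paragraph preceding Theorem \ref{thm:symmetric-finite-ends}. The hypotheses that $\ell(\beta_n)$ and $\ell(\gamma_n)$ lie between two positive constants are exactly those of that theorem, so Theorem \ref{thm:symmetric-finite-ends} applies and gives that $X^{1/2}$ is parabolic if and only if $\Gamma$ is of the first kind.

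It therefore remains to prove $(1)\Leftrightarrow(2)$, namely that $\Gamma$ is of the first kind precisely when $\sum_n e^{-\sigma_n/2}=\infty$; this is the only place where the explicit Fenchel--Nielsen data enter. Since being of the first kind is a property of the limit set $\Lambda(\Gamma)\subset\partial\H$, and $X^{1/2}$ has only finitely many ends, I would argue end by end and reduce each end to a half-twist flute. Concretely, for a fixed end I would develop in $\H$ the nested lifts $\tilde\alpha_n$ of the boundary geodesics $\alpha_n$ of the pants decomposition. The half-twists produce the usual sign alternation, so $\tilde\alpha_n$ cuts off a boundary interval whose length is comparable to the quantity $e^{-\sigma_n/2}$ governing first kindness for the flute in the proof of Theorem \ref{thm:sum-flute-1/2}. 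The handles, each cut off by $\beta_n$ and carrying the interior geodesic $\gamma_n$, are attached along the spine by geodesics of uniformly bounded length and are themselves of uniformly bounded diameter; I would show that such a handle perturbs the relevant collar widths and translation lengths, and hence the size of the interval cut off by $\tilde\alpha_n$, by at most a bounded multiplicative factor independent of $n$.

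Granting this comparison, the intervals cut off toward the end shrink to a point if and only if they do so for the half-twist flute with the same sequence $\ell_n$, so that $\Gamma$ is of the first kind if and only if the corresponding flute covering group is. Applying Theorem \ref{thm:sum-flute-1/2} to that flute then identifies the latter with divergence of $\sum_n e^{-\sigma_n/2}$; running the argument over each of the finitely many ends, $\Gamma$ is of the first kind exactly when every end fills $\partial\H$, which is condition $(2)$. Together with the reduction above this yields the asserted equivalence of $(1)$, $(2)$ and $(3)$.

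The main obstacle is the comparison step: controlling the effect of the handles on the limit set. One must verify that a compact one-holed torus of bounded geometry attached along the bounded geodesic $\beta_n$ neither obstructs the interval cut off toward the end from shrinking, nor, conversely, leaves an uncovered subinterval once the flute part already fills the circle; in other words, that the bounded-diameter handles contribute only bounded corrections to the boundary-interval estimates and do not by themselves change whether $\Lambda(\Gamma)=\partial\H$. This is precisely where the uniform bounds on $\ell(\beta_n)$ and $\ell(\gamma_n)$ are used, and making the dependence of the interval lengths on these bounds quantitative, uniformly in $n$, is the technical heart of the proof.
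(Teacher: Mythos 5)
Your reduction of $(1)\Leftrightarrow(3)$ to Theorem \ref{thm:symmetric-finite-ends} is what the paper does, with one omission: symmetry of the ends is not automatic from the half-twists on the $\alpha_n$ alone. One must first use the bounds on $\ell(\beta_n)$ and $\ell(\gamma_n)$ to apply a quasiconformal normalization (via Shiga's theorem) making the cut-off tori isometric with zero twists on $\beta_n$ and $\gamma_n$; only then does the front/back reflection exist. That is fixable. The genuine gap is in $(1)\Leftrightarrow(2)$. You propose to compare the development of an end of $X^{1/2}$ with that of a half-twist flute having the same $\ell_n$, claiming each bounded-geometry handle perturbs the interval cut off by $\tilde\alpha_n$ by a bounded multiplicative factor independent of $n$. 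As stated this does not follow from bounded geometry: the interval cut off by $\tilde\alpha_n$ depends on the geometry of all $n$ pairs of pants between $\alpha_1$ and $\alpha_n$, and a perturbation of each by a factor in $[1/K,K]$ compounds to a factor as large as $K^n$, after which nothing can be said about whether the intervals shrink to a point. Indeed the whole difficulty of Theorem \ref{thm:half-twist-O_G} is that the relevant exponential sums are sensitive to exactly such accumulating errors; its proof succeeds only because the shear formulas are exact and the resulting products telescope to boundary terms. Note also that there is no quasiconformal map from the end surface to the flute (they are not homeomorphic), so the comparison cannot be outsourced to quasi-invariance of moduli.

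The paper avoids the comparison altogether: it reruns the shear and horocyclic-path computation of Theorem \ref{thm:half-twist-O_G} directly on the lift of $X_i^*\setminus(\cup_n\gamma_n)$, observing that the combinatorial configuration of the geodesics $g_n$ is identical to the flute case and that the only change is that the orthogeodesic $\eta_n$ is now a side of a right-angled hexagon whose opposite side is a lift of half of $\beta_n$. The hexagon formula together with the bounds on $\ell(\beta_n)$ yields the same two-sided estimate $e^{-\ell_{n+1}/2}<\ell(\eta_n)<Ce^{-\ell_n/2}$, after which Lemmas \ref{lem:shear-even} and \ref{lem:shear-odd} and the telescoping estimates apply verbatim, and first kindness follows from Remark \ref{rem:nested-alpha-acc}. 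Your sketch also leaves the direction ``$\sum_n e^{-\sigma_n/2}<\infty$ implies $\Gamma$ is of the second kind'' resting entirely on the unproven comparison; the paper proves it separately by exhibiting a finite-length escaping path built from the summits of the Saccheri quadrilaterals with bases $\eta_n$, whose $n$-th summit has length at most $Ce^{-\sigma_{n-1}/2}$.
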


The method of the proof of Theorem \ref{thm:sum-flute-1/2} could be of independent interest. The main direction is to prove that (2) implies (1). The goal is to prove that the nested sequence of lifts of $\alpha_n$ accumulates to a single point on the ideal boundary of $\mathbb{H}$.  This is challenging because the half-twists play the role of off-setting the lifts of the boundary geodesics $\alpha_n$ by the lengths $\frac{\ell_n}{2}$ which  converge to infinity. When we connect the consecutive lifts $\widetilde{\alpha_n}$ of $\alpha_n$ by common orthogeodesic arcs $\eta_n$, the foots on $\widetilde{\alpha_n}$ from the left and the right are a  distance $\frac{\ell_n}{2}$ apart. This problem can be compared to the part of the proof by Kahn and Markovic \cite{KahnMarkovic} of the Surface Subgroup conjecture where their offset is by the length $1$ in the same direction. 

Starting from a point on $\widetilde{\alpha_1}$, we can form a concatenation of the summits of Saccheri quadrilaterals with bases $\eta_n$ (see \cite[Figure 9.2]{BHS}). Then $e^{-\frac{\sigma_{n-1}}{2}}+e^{-\frac{\sigma_{n+1}}{2}}$ is asymptotically the length of the $n$-th summit. 
Showing directly that (2) implies that the lifts $\widetilde{\alpha_n}$ accumulate to a single point on the ideal boundary of $\mathbb{H}$ seems very prohibitive. Instead we add one more geodesic between each consecutive pair $\widetilde{\alpha_n}$ and $\widetilde{\alpha_{n+1}}$ to obtain a sequence of wedges which can be though of as  part of an ideal triangulation of $\mathbb{H}$ (the dotted geodesics in  Figure 10). Then we compute the shears (see \cite[Theorem C]{Saric2011}) along the nested sequence in order to show that the length of a piecewise horocyclic path along the wedges is infinite which shows that the accumulation is to a single point. For convenience, we prove all the necessary properties and give detailed analysis of the piecewise horocyclic path and shears in the Appendix.

\section{The class of parabolic Riemann surfaces}

A Riemann surface $X$ is said to be {\it infinite} if its fundamental group is infinitely generated. An infinite Riemann surface supports a unique hyperbolic metric in its conformal class and it is isometric to $\mathbb{H}/\Gamma$, where $\mathbb{H}$ is the hyperbolic plane and $\Gamma$ is a Fuchsian covering group.

Any infinite Riemann surface $X$ has a topological pants decomposition (see \cite{Ker}, \cite{Richards}). A {\it topological pair of pants} is a bordered surface homeomorphic to a sphere minus three open disks whose {boundaries} are Jordan curves. A {\it geodesic pair of pants} is a bordered hyperbolic surface whose interior is homeomorphic to the interior of a topological pair of pants and whose boundaries are either simple closed geodesics or punctures. We do not allow all three boundary components to be punctures. 

Fix a topological pants decomposition $\mathcal{P}$ of an infinite Riemann surface $X$.
By replacing each boundary curve of every topological pair of pants of $\mathcal{P}$ by either a homotopic simple closed geodesic or a puncture, we can straighten the topological pairs of pants into geodesic pairs of pants. The union of such obtained geodesic pairs of pants covers the convex core $\mathcal{C}(X)$ of $X$ with the exception of open geodesics on its boundary (see \cite{BasmajianSaric}). In fact, the Riemann surface $X$ is obtained by attaching hyperbolic funnels to closed boundary geodesics of the convex core and by attaching geodesic half-planes to open boundary geodesics of the convex core (see \cite{BasmajianSaric} and also \cite{AlvarezRodriguez}). A similar fact holds for any orientable complete Riemannian surface, without any restriction on curvature, see \cite{Portilla}. When $\Gamma$ is of the first kind, then each topological pants decomposition $\mathcal{P}$ straightens necessarily to a geodesic pants decomposition $\hat{\mathcal{P}}$ of the whole surface $X$ (see \cite{BasmajianSaric}). 

Given a geodesic pants decomposition, denote by $\{\alpha_n\}_{n=1}^{\infty}$ the set of all geodesics on the boundaries of the pairs of
pants and let $\ell (\alpha_n)$ be the length of $\alpha_n$ {with} the hyperbolic metric on $X$. For each $\alpha_n$, define the twist parameter $t(\alpha_n)$ to be the relative distance between the foots of the two  orthogeodesics from $\alpha_n$ to the closest geodesics  in the decomposition on both sides of $\alpha_n$. Since we take relative twists, we can normalize such that $-\frac{1}{2}\leq t(\alpha_n)\leq \frac{1}{2}$, where $\frac{1}{2}$ and $-\frac{1}{2}$ represent the same choice. The Fenchel-Nielsen parameters  $\{ (\ell (\alpha_n),t(\alpha_n)\}_n$ are induced by the hyperbolic metric (\cite{ALPS}). Conversely, a choice of pairs $\{ (\ell_n,t_n)\}_n$ with $\ell_n>0$ and $|t_n|\leq \frac{1}{2}$ defines a surface obtained by gluing with isometry geodesic pairs of pants determined by lengths $\ell_n$ along their boundary curves with choice of $t_n$ uniquely determining the gluings. The obtained metric space may not be complete and in this case we need to add hyperbolic funnels and geodesic half-planes to make a hyperbolic Riemann surface. If the union of the pairs of pants is complete and every geodesic boundary of a pair of pants is glued to another geodesic boundary then the covering group of $X$ is of the first kind (see \cite{BasmajianSaric}).

A Riemann surface $X$ is said to be parabolic if it does not admit a Green's function (\cite{AhlforsSario}). By Ahlfors-Sario \cite{AhlforsSario}, a Riemann surface is {parabolic} if and only if the modulus of the curve family connecting a compact subsurface with the infinity of $X$ is zero. In addition, $X$ is parabolic if and only if the geodesic flow on the unit tangent bundle $T^1(X)$ of $X$ is ergodic if and only if the Poincar\'e series of $\Gamma$ is divergent (see \cite{Nicholls}). 

The type problem for Riemann surfaces is determining when an explicit construction gives rise to a Riemann surface which does not admit a Green's function. This problem has been {studied} extensively using complex analytic constructions of Riemann surfaces. More recently, Basmajian, Hakobyan and the second author \cite{BHS} considered  which conditions on the Fenchel-Nielsen parameters guarantee that the corresponding surface is {parabolic}. A  sufficient condition for an arbitrary Riemann surface {being parabolic} in terms of the lengths of the geodesic boundaries  is given in \cite{BHS}. The twist parameters are not used for arbitrary topological type of Riemann surface. However, the twists become important when the Riemann surface has a small set of ends and large lengths. 

\begin{theorem}[\cite{BHS}]
\label{thm:flutes}
Let $X$ be a tight flute surface with the Fenchel-Nielsen coordinates $\{ (\ell_n,t_n)\}_n$ which correspond to the closed geodesics $\alpha_n$ on the boundary of the pants decomposition.

If 
$$
\sum_{n=1}^{\infty}e^{-\frac{\ell_n}{2}}=\infty
$$
or 
$$
\sum_{n=1}^{\infty}e^{-(1-|t_n|)\frac{\ell_n}{2}}=\infty
$$
then $X$ {is parabolic}.
\end{theorem}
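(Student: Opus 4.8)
\emph{Overall strategy.} The plan is to deduce parabolicity from the modulus criterion recorded above: $X$ is parabolic exactly when the modulus $\mathrm{Mod}(\Gamma_\infty)$ of the family $\Gamma_\infty$ of curves joining a fixed compact subsurface $K$ to the ideal boundary of $X$ is zero. I would show that each of the two displayed conditions forces $\mathrm{Mod}(\Gamma_\infty)=0$ by the length--area method. Because $X$ is a flute, each $\alpha_n$ separates $K$ from the single end, so \emph{every} curve escaping to infinity must cross \emph{every} $\alpha_n$; this is what turns the global estimate into a sum of independent local contributions, one per level $n$.

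\emph{Length--area reduction.} For each $n$ I would build an embedded transition region $R_n$ that separates $K$ from the end and is crossed in the $\alpha_n$ direction, assign a constant density $c_n$ on $R_n$ (with respect to the hyperbolic metric) and $0$ elsewhere, and balance two quantities: admissibility, which holds once $\sum_n c_n h_n\ge 1$ where $h_n$ is the least hyperbolic width of $R_n$ in the crossing direction, against the total area $\sum_n c_n^2 A_n$ with $A_n=\mathrm{Area}(R_n)$. Optimizing in the $c_n$ yields $\mathrm{Mod}(\Gamma_\infty)\le\big(\sum_n h_n^2/A_n\big)^{-1}$, so $\sum_n h_n^2/A_n=\infty$ gives parabolicity; equivalently one may argue through the superadditivity (series law) of the extremal lengths of the crossing families of the disjoint $R_n$. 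Everything therefore reduces to the two local lower bounds $h_n^2/A_n\gtrsim e^{-\ell_n/2}$ and $h_n^2/A_n\gtrsim e^{-(1-|t_n|)\ell_n/2}$.

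\emph{The sharp local estimate (main obstacle).} The standard collar of $\alpha_n$ has half-width $w_n=\operatorname{arcsinh}(1/\sinh(\ell_n/2))\sim 2e^{-\ell_n/2}$ and circumference $\ell_n$, and a direct computation gives only $h_n^2/A_n\sim e^{-\ell_n/2}/\ell_n$. Removing this spurious factor $\ell_n$ is the heart of the matter. I would use that in a pair of pants with large boundary lengths the orthogeodesic between $\alpha_n$ and $\alpha_{n+1}$ is short (indeed $\cosh d\to 1$ as $\ell_n,\ell_{n+1}\to\infty$ with a cusp as the third boundary), so consecutive boundary geodesics are forced close together; this is what permits replacing the symmetric collar by a \emph{non-standard} half-collar, and forming $R_n$ as a union of two such half-collars bridging $\alpha_n$ as in the half-twist analysis of \cite{BHS}, whose modulus can then be estimated directly without paying for the full circumference. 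I expect this estimate to be the hardest step, both because the embedding radius of the non-standard half-collars must be controlled against the neighbouring pants, and because it is where the twist enters the computation.

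\emph{Effect of the twist.} Finally, the twist $t_n$ offsets by $t_n\ell_n$ along $\alpha_n$ the feet of the two orthogeodesics that an efficient escaping curve follows on the two sides of $\alpha_n$; when the two half-collars of $R_n$ are glued with this offset they cease to align, and a crossing curve must travel laterally a definite fraction of $\ell_n$, degrading the modulus bound from $e^{-\ell_n/2}$ to $e^{-(1-|t_n|)\ell_n/2}$. This recovers the untwisted value at $t_n=0$ and the half-twist threshold $e^{-\ell_n/4}$ at $t_n=\tfrac12$. Summing over $n$ and invoking the modulus criterion then gives parabolicity under either hypothesis; the passage from the two local bounds to the conclusion is the soft part, and the sharp local moduli of the previous paragraph are the only genuinely delicate point.
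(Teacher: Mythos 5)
Your proposal is sound and follows essentially the same route as the source: the paper does not reprove this theorem but quotes it from \cite{BHS}, whose argument it explicitly describes as ``a semi-local argument (the countable sum of the moduli of the union of two non-standard half-collars)'' --- precisely your length--area scheme with the series law over disjoint transition regions built from non-standard half-collars, with the twist entering through the offset of the orthogeodesic feet. You correctly isolate the only delicate point (removing the factor $\ell_n$ that the standard collar produces, and keeping the enlarged half-collars embedded and disjoint), which is exactly where the work in \cite[Section 9.1]{BHS} lies.
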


When all twists are zero, the following characterization of {parabolic} flute surfaces holds.

\begin{theorem}[\cite{BHS}]
\label{thm:zero-twist}
Let $X$ be a flute surface with twists $t_n=0$ for all $n$. Then the following are equivalent.
\begin{itemize}
\item $X=\mathbb{H}/\Gamma$ has the covering group $\Gamma$ of the first kind,
\item $\sum_{n=1}^{\infty}e^{-\frac{\ell_n}{2}}=\infty$ and
\item $X$ {is parabolic}.
\end{itemize}

\end{theorem}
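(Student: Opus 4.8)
The plan is to prove a cycle of implications among the three conditions, which I label (i) $\Gamma$ is of the first kind, (ii) $\sum_{n}e^{-\ell_n/2}=\infty$, and (iii) $X$ is parabolic; here $\ell_n=\ell(\alpha_n)$. Two of the three links are essentially free: (ii)$\Rightarrow$(iii) is Theorem \ref{thm:flutes} specialized to $t_n=0$, since then both hypotheses there reduce to $\sum_n e^{-\ell_n/2}=\infty$; and (iii)$\Rightarrow$(i) follows from general theory, because parabolicity is equivalent to the divergence of the Poincar\'e series of $\Gamma$, and a Fuchsian group with divergent Poincar\'e series is necessarily of the first kind (a group of the second kind is of convergence type). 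Thus the entire content is the remaining link (i)$\Rightarrow$(ii), which I would prove in its contrapositive form: if $\sum_n e^{-\ell_n/2}<\infty$ then $\Gamma$ is of the second kind.

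To set this up, first I would use the hypothesis $t_n=0$ to straighten the lifts. For each $n$ let $\eta_n$ be the common orthogeodesic between $\alpha_n$ and $\alpha_{n+1}$ inside the pair of pants $P_n$; since all twists vanish, the feet of $\eta_{n-1}$ and $\eta_n$ on $\alpha_n$ coincide, so the lifts of the $\eta_n$ concatenate in $\mathbb{H}$ into a single complete geodesic $L$ that meets every lift $\widetilde{\alpha_n}$ orthogonally. Normalizing $L$ to be the imaginary axis, each $\widetilde{\alpha_n}$ becomes a Euclidean semicircle centered at $0$ of radius $r_n=e^{s_n}$, where $s_n=s_1+\sum_{k<n}d_k$ and $d_k=\ell(\eta_k)$; the sequence $\widetilde{\alpha_n}$ is nested and runs out the single end. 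In this normalization the dichotomy is explicit: if $\sum_n d_n=\infty$ then $r_n\to\infty$ and the $\widetilde{\alpha_n}$ accumulate to the single ideal point $\infty$, whereas if $\sum_n d_n<\infty$ then $s_n\to s_\infty<\infty$ and the $\widetilde{\alpha_n}$ converge to the genuine geodesic of radius $e^{s_\infty}$, which cuts off an open boundary arc lying in the ordinary set; in the latter case $\Gamma$ is of the second kind.

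It remains to match $\sum_n d_n$ with $\sum_n e^{-\ell_n/2}$. I would compute $d_n$ from the cuff-distance formula in the pair of pants $P_n$, whose two cuffs are $\alpha_n$ and $\alpha_{n+1}$ and whose third boundary is a puncture:
\[
\cosh d_n=\frac{1+\cosh(\ell_n/2)\cosh(\ell_{n+1}/2)}{\sinh(\ell_n/2)\sinh(\ell_{n+1}/2)}.
\]
If the lengths stay bounded both series diverge trivially, so the only case to treat is $\ell_n\to\infty$, where expanding the right-hand side gives $d_n\asymp e^{-\ell_n/2}+e^{-\ell_{n+1}/2}$. Consequently $\sum_n d_n$ and $\sum_n e^{-\ell_n/2}$ converge or diverge together, so $\sum_n e^{-\ell_n/2}<\infty$ forces $\sum_n d_n<\infty$ and hence, by the previous paragraph, $\Gamma$ of the second kind. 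This proves (i)$\Rightarrow$(ii) and closes the cycle.

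The step I expect to demand the most care is the geometric claim that, when the nested lifts converge to an honest geodesic, the cut-off arc really lies in the ordinary set, so that $\Gamma$ is genuinely of the second kind rather than having the arc filled in by other translates of the fundamental domain. Here the vanishing of the twists is decisive: it is exactly what produces the common perpendicular $L$ and the reflection symmetry across it, pinning the $\widetilde{\alpha_n}$ to concentric semicircles and making the limiting geodesic, and the gap it bounds, completely explicit. This is precisely the simplification that disappears in the half-twist case, where the $\ell_n/2$ offsets destroy the common perpendicular and necessitate the Saccheri-quadrilateral and shear analysis alluded to in the introduction.
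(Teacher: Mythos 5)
Your proposal is correct. Note, however, that the paper itself offers no proof of this statement: it is imported verbatim from \cite{BHS} (their Theorem 1.5), so there is no in-paper argument to compare against line by line. Measured against the paper's surrounding machinery, your route is the natural one and essentially reconstructs the original: the cycle (ii)$\Rightarrow$(iii) via Theorem \ref{thm:flutes}, (iii)$\Rightarrow$(i) via divergence type, and the contrapositive of (i)$\Rightarrow$(ii) via the concatenated orthogeodesics is exactly the skeleton of the zero-twist case, and your hexagon computation $\cosh d_n=\frac{1+\cosh(\ell_n/2)\cosh(\ell_{n+1}/2)}{\sinh(\ell_n/2)\sinh(\ell_{n+1}/2)}$ with the resulting $d_n\asymp e^{-\ell_n/2}+e^{-\ell_{n+1}/2}$ matches the estimate (\ref{eq:eta_n}) the paper uses in the half-twist setting. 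On the one point you flag as delicate --- that the limiting semicircle really bounds an interval of the ordinary set --- you do not actually need to rule out other translates filling in the gap: the completeness criterion of \cite{BasmajianSaric} quoted in Section 4 (the covering group is of the first kind if and only if every geodesic ray exiting the union of the pairs of pants has infinite length) disposes of it immediately, since $\sum_n d_n<\infty$ exhibits an escaping ray of finite length, so the union of pants is incomplete, a half-plane gets attached, and $\Gamma$ is of the second kind. This is also why the paper's own Theorem \ref{thm:equiv-par-com}, which reproves (i)$\Leftrightarrow$(iii) for all twists in $\{0,1/2\}$ by the symmetry/modulus argument, does not need your explicit concentric-semicircle normalization; what your normalization buys, and what Theorem \ref{thm:equiv-par-com} does not give, is the quantitative link to $\sum_n e^{-\ell_n/2}$, which is precisely the content of item (ii).
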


\section{Symmetric surfaces and modulus of curve families}

\begin{definition}
An infinite Riemann surface $X$ is called {\it symmetric} if there exists an orientation reversing isometry (anti-conformal reflection) $R:X\to X$ whose set of fixed points consists of pairwise disjoint bi-infinite and/or closed geodesics that divide the surface into two connected components that are permuted by $R$.
\end{definition}

Denote by $R_f\subset X$ the set of fixed points of $R$. If $R$ is an orientation reversing isometry of an infinite Riemann surface $X$ as in the above definition, then $R_f$ has finitely or infinitely many connected components while $X\setminus R_f$ has two components, denoted by $X^*$ and $X^{**}$. Each component is a hyperbolic surface with geodesic boundary such that each boundary component is either a bi-infinite geodesic or a closed geodesic. We call $X^*$ the {\it front side} of $X$.

Consider a family of curves $\Gamma$ in $X$ that are locally rectifiable. Let $\rho$ be a metric on $X$ that is non-negative and Borel measurable. A metric $\rho$ on $X$ is allowable for $\Gamma$ if the $\rho-$length of every curve in $\Gamma$ is at least one. Keep in mind the $\rho-$length of non-rectifiable curves is said to be infinite. We recall the modulus of $\Gamma$ is defined to be
$$
\mathrm{mod}\Gamma_n=\inf_{\rho}\iint_{X}\rho^2(z)dxdy
$$
with the infimum being over all allowable metrics $\rho$ for $\Gamma$.

A symmetry of $X$ suggests a comparison of the modulus of curve families in $X$ to the modulus of curve families in $X^*$. We establish an asymptotic comparison between the modulus of a sequence of curve families connecting a fixed compact subset of $X$ to infinity to the modulus of an analogous sequence in the front side $X^*$. This will be used later to establish a necessary and sufficient condition for the symmetric surfaces to {be parabolic}.

\begin{theorem}
\label{thm:symm-parabolic}
Consider  an infinite symmetric Riemann surface $X$ with an orientation reversing isometry $R:X\to X$. Let $\{X_n\}_{n=1}^\infty$ be an exhaustion of $X$ by finite area subsurfaces with compact geodesic boundary that are invariant under $R$. Denote by $\Gamma_n$ the family of curves that connects $\partial X_1$ and $\partial X_n$ inside $X_n\setminus X_1$.  Let $\Gamma_n^*$ be the subfamily of $\Gamma_n$ that lies in the front side $X^*$ of $X\setminus f(R)$. Then
$$
\lim_{n\to\infty} \mathrm{mod}\Gamma_n=0
$$
if and only if
$$
\lim_{n\to\infty} \mathrm{mod}\Gamma_n^*=0.
$$
\end{theorem}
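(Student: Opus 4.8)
The plan is to derive the equivalence from a two-sided, constant-factor comparison of moduli, namely the chain
\[
\mathrm{mod}\,\Gamma_n^*\ \le\ \mathrm{mod}\,\Gamma_n\ \le\ 2\,\mathrm{mod}\,\Gamma_n^*,
\]
valid for every $n$; once this is established the equivalence of the two limits is immediate. Throughout I would take the hyperbolic metric of $X$ as the background conformal metric. Since the modulus of a curve family is a conformal invariant, computing areas $dA$ and $\rho$-lengths $ds$ hyperbolically costs nothing, and it has the decisive advantage that the reflection $R$, being a hyperbolic isometry, preserves both hyperbolic area and hyperbolic arclength. The left inequality is pure monotonicity: because $\Gamma_n^*\subset\Gamma_n$, every metric admissible for $\Gamma_n$ is admissible for $\Gamma_n^*$, so $\mathrm{mod}\,\Gamma_n^*\le\mathrm{mod}\,\Gamma_n$. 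This already yields $\lim\mathrm{mod}\,\Gamma_n=0\Rightarrow\lim\mathrm{mod}\,\Gamma_n^*=0$ and carries no real content.

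The substantive direction is the right inequality, which I would obtain by symmetrizing an admissible metric for the front side. Fix $n$ and let $\rho^*$ be admissible for $\Gamma_n^*$. Define a metric $\rho$ on all of $X$ by $\rho=\rho^*$ on $X^*$ and $\rho=\rho^*\circ R$ on $X^{**}$, the values on the measure-zero locus $R_f$ being irrelevant. Since $R$ is a hyperbolic isometry interchanging $X^*$ and $X^{**}$, the change of variables $z\mapsto R(z)$ gives
\[
\iint_X \rho^2\,dA \;=\; \iint_{X^*}(\rho^*)^2\,dA+\iint_{X^{**}}(\rho^*\circ R)^2\,dA \;=\; 2\iint_{X^*}(\rho^*)^2\,dA.
\]
Granting that $\rho$ is admissible for $\Gamma_n$ and taking the infimum over $\rho^*$ then yields $\mathrm{mod}\,\Gamma_n\le 2\,\mathrm{mod}\,\Gamma_n^*$, hence the converse implication $\lim\mathrm{mod}\,\Gamma_n^*=0\Rightarrow\lim\mathrm{mod}\,\Gamma_n=0$.

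Admissibility of $\rho$ I would verify by a folding argument. Consider the map $\pi\colon X\to \overline{X^*}=X^*\cup R_f$ that is the identity on $X^*$ and equals $R$ on $X^{**}$; it is continuous because $R$ fixes $R_f$ pointwise, and it is a piecewise hyperbolic isometry. Given $\gamma\in\Gamma_n$, its image $\hat\gamma=\pi\circ\gamma$ lies in $\overline{X^*}\cap(X_n\setminus X_1)$ and still joins $\partial X_1$ to $\partial X_n$, since $X_1$, $X_n$, and hence their boundaries, are $R$-invariant. Splitting $\gamma$ into its arcs in $X^*$ and in $X^{**}$ and using $\rho=\rho^*\circ R$ together with the isometry property of $R$ on the latter, one obtains $\int_\gamma\rho\,ds=\int_{\hat\gamma}\rho^*\,ds$. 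As $\hat\gamma$ is a front-side curve joining the two boundaries, admissibility of $\rho^*$ forces $\int_{\hat\gamma}\rho^*\,ds\ge 1$, whence $\int_\gamma\rho\,ds\ge 1$.

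The hard part will be making this folding step rigorous at the fixed locus $R_f$: a folded curve $\hat\gamma$ may touch, or even run along, the geodesics of $R_f$, so a priori it need not belong to the open front-side family $\Gamma_n^*$ on which $\rho^*$ was assumed admissible. I would resolve this by showing that the front-side modulus is unchanged when curves are allowed to meet $R_f$, i.e. that $\Gamma_n^*$ and the enlarged family of curves in $\overline{X^*}$ joining $\partial X_1$ to $\partial X_n$ have the same modulus. This should follow from the fact that $R_f$ is a measure-zero union of smooth geodesics: the areas are unaffected, and any front-side curve meeting $R_f$ can be pushed into the open side $X^*$ with its hyperbolic length changing by an arbitrarily small amount, so the admissibility constraint, and hence the extremal value, is the same for the two families. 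With this identification the folding estimate goes through, the chain of inequalities is complete, and the equivalence of the limits follows.
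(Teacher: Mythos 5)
Your proposal follows essentially the same route as the paper: the easy direction by monotonicity of modulus under passing to a subfamily, and the substantive direction by reflecting an admissible front-side metric across $R$ (doubling the area integral) and folding an arbitrary curve of $\Gamma_n$ onto the front side so that its $\rho$-length equals the $\rho^*$-length of a front-side competitor. The one place you genuinely deviate is the treatment of the fixed locus $R_f$, and that is where your argument has a flaw. You leave $\rho$ unspecified on $R_f$ and instead propose a separate lemma that enlarging $\Gamma_n^*$ to curves in $\overline{X^*}$ does not change the modulus, to be proved by pushing a curve off $R_f$ ``with its hyperbolic length changing by an arbitrarily small amount.'' Admissibility, however, is a constraint on $\rho$-length for an \emph{arbitrary} nonnegative Borel metric $\rho^*$, not on hyperbolic length: a perturbation that is small in the hyperbolic metric can change $\int\rho^*\,ds$ by an arbitrarily large amount (indeed can move the curve off the set where $\rho^*$ is concentrated), so this perturbation argument does not establish that the extremal problems for the two families coincide.

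The paper avoids the issue with a one-line device you should adopt: define $\rho_n=\infty$ on $X_n\cap R_f$. Since $R_f$ has measure zero this does not affect the area integral, and it splits the curves of $\Gamma_n$ into two cases. A curve meeting $R_f$ in a set of positive length automatically has infinite $\rho_n$-length, hence satisfies the admissibility inequality for free. A curve meeting $R_f$ only in a null set folds, under your map $\pi$, to a curve whose intersection with $R_f$ is negligible for the length integral, and which lies in the (bordered) front side and joins $\partial X_1$ to $\partial X_n$; this is the competitor to which admissibility of $\rho_n^*$ applies. With that modification your chain $\mathrm{mod}\,\Gamma_n^*\le\mathrm{mod}\,\Gamma_n\le 2\,\mathrm{mod}\,\Gamma_n^*$ goes through and the proof is complete.
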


\begin{proof}
Since $\Gamma_n^*$ is a subfamily of $\Gamma_n$, we have $\mathrm{mod}\Gamma_n^*\leq\mathrm{mod}\Gamma_n$. Thus $\lim_{n\to\infty} \mathrm{mod}\Gamma_n=0$ implies $\lim_{n\to\infty} \mathrm{mod}\Gamma_n^*=0$.

We need to prove the opposite direction. Let $X_n^*=X_n\cap X^*$ be the front side of $X_n$ and assume $\lim_{n\to\infty} \mathrm{mod}\Gamma_n^*=0$. Then there exists   an allowable metric $\rho_n^*$ for $\Gamma_n^*$ on $X_n^*$ such that $$\iint_{X_n^*}[\rho_n^*(z)]^2dxdy\to 0$$ as $n\to\infty$. 

We define $\rho_n(z):=\rho_n^*(z)$ for $z\in X_n^*$, $\rho_n(z):=\rho_n^*(R(z))$ for $z\in X_n\cap X^{**}$ and $\rho_n(z)=\infty$ for $z\in X_n\cap R_f$. 

We claim that $\rho_n(z)$ is allowable for $\Gamma_n$. Indeed, let $\gamma\in\Gamma_n$. If $\gamma\subset X_n^*$ then $\int_{\gamma}\rho_n(z)|dz|=\int_{\gamma}\rho_n^*(z)|dz|\geq 1$. If $\gamma\in\Gamma_n$ intersects $R_f$ in a set of positive length then $\int_{\gamma}\rho_n(z)|dz|=\infty >1$. If $\gamma \in\Gamma_n$ intersects $X_n\cap X^{**}$ and it does not intersect $R_f$ in a set of positive length, then we define $\gamma^*\in\Gamma_n^*$ by mapping with $R$ each component of $\gamma\cap (X_n\cap X^{**})$ to an arc in $X_n^*$ and keeping the other points of $\gamma$ fixed. Then we have $\int_{\gamma^*}\rho_n^*(z)|dz|\geq 1$ because $\rho_n^*$ is allowable for $\Gamma_n^*$ and $\gamma^*\in\Gamma_n^*$. By the definition of $\rho_n$ and $\gamma^*$, we have that $\int_{\gamma^*}\rho_n^*(z)|dz|=\int_{\gamma}\rho_n(z)|dz|$ and $\rho_n$ is allowable for $\Gamma_n$. 

By the definition of $\rho_n$, we have that
$$
\iint_{X_n}\rho_n(z)^2dxdy=2\iint_{X_n^*}\rho_n^*(z)^2dxdy\to 0
$$
as $n\to\infty$. Therefore $\lim_{n\to\infty}\mathrm{mod}\Gamma_n=0$ and the theorem is proved.
\end{proof}

\section{The equivalence of completeness and parabolicity for surfaces with finite ends}

By \cite[Proposition 3.1]{BasmajianSaric}, a Riemann surface $X$ has a Fuchsian covering group of the first kind if and only if every topological pants decomposition of $X$ can be straightened into a geodesic pants decomposition. When constructing a Riemann surface $X$ by gluing infinitely many geodesic pairs of pants, the surface $X$ with the induced metric is complete if and only if the Fuchsian covering group is of the first kind. For this reason we may occasionally call Riemann surfaces with Fuchsian group of the first kind {\it complete} as in the union of pairs of pants make a complete metric space.

We recall that there exist Riemann surfaces whose covering Fuchsian groups are of the first kind that are not parabolic. 
Examples of such surfaces are abundant when the space of topological ends is large, e.g, a Cantor set. McMullen \cite{McMullen} proved that when the Cantor tree surface has a geodesic pants decomposition with cuff lengths bounded below and above by two positive constants then the surface is not parabolic (i.e. it admits a Green's function) while its Fuchsian group is of the first kind. In fact, by \cite[Theorem 1.4]{Saric} the lengths of cuffs can go to zero in a controlled fashion and the same property will hold. 

Therefore, it is of interest to find under which conditions, for surfaces with small space of ends, completeness implies parabolicity. We use Theorem \ref{thm:symm-parabolic} to give a sufficient condition which will extend previously obtained results in \cite[Theorems 1.5 and 1.7]{BHS}.

\subsection{The half-twist infinite flute surfaces} Recall that a flute surface is a planar hyperbolic surface whose space of ends  consists of countably many punctures that accumulate to a single (non-isolated) topological end.
It is known that there exist flute surfaces which have covering Fuchsian groups of the first kind that are not {parabolic}. Conformally, such surfaces are obtained by puncturing the unit disk at countably many points with punctures accumulating to all points of the unit circle (see Kinjo \cite{Kinjo} and \cite[Proposition 4.3]{BasmajianSaric}). When the flute surface is obtained by gluing geodesic pairs of pants with zero twists (see Figure {2}), then the flute surface is complete (equivalently, the covering group is of the first kind) if and only if it is parabolic (see \cite[Theorem 1.5]{BHS}). 

 \begin{figure}[h]
\leavevmode \SetLabels
\L(.26*.4) $\ell_1$\\
\L(.44*.4) $\ell_2$\\
\L(.6*.4) $\ell_3$\\
\L(.775*.4) $\ell_4$\\
\endSetLabels
\begin{center}
\AffixLabels{\centerline{\epsfig{file =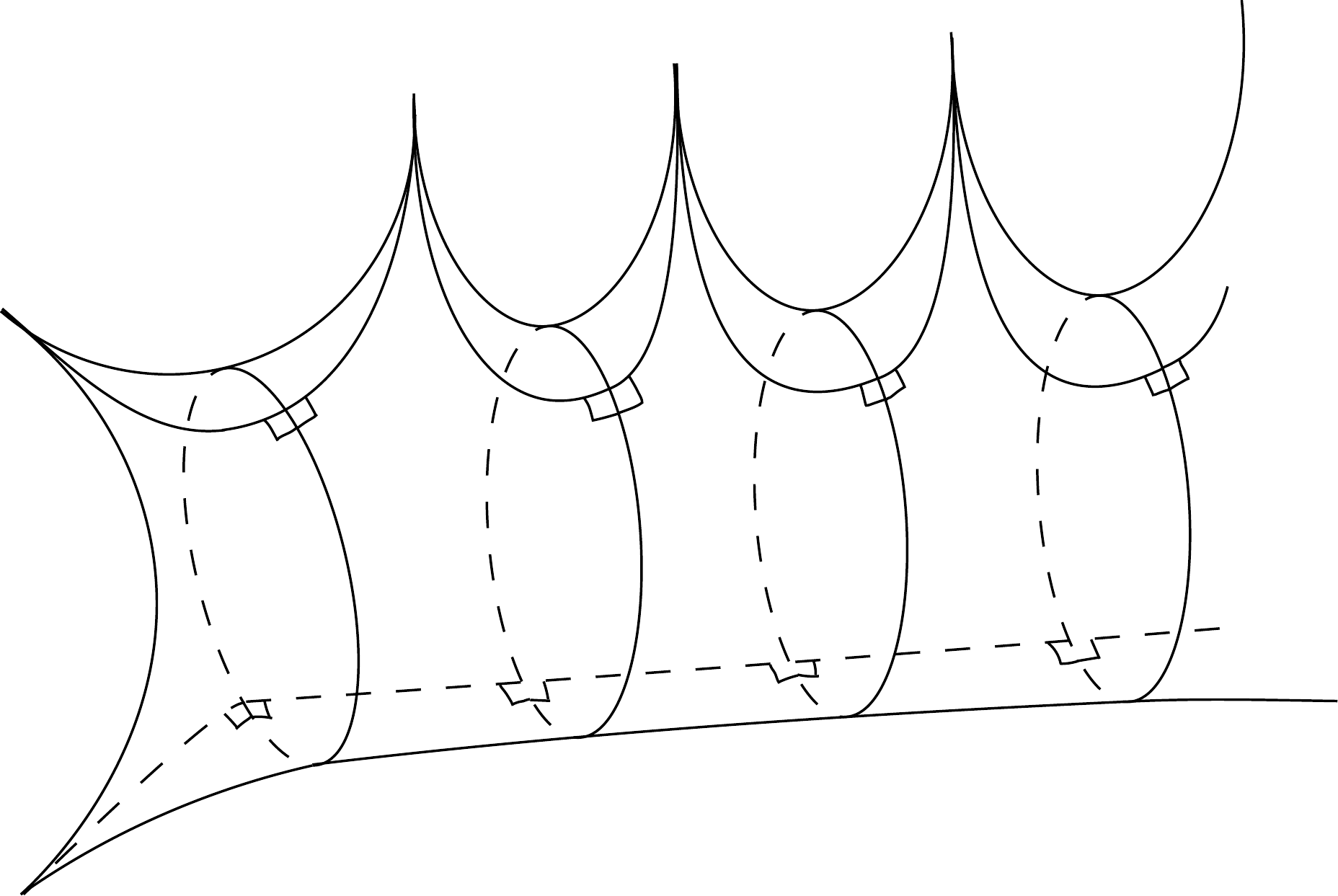,width=4in,height=3in,angle=0} }}
\vspace{-20pt}
\end{center}
\caption{The zero twist flute surface.}
\end{figure}

When the flute surface $X$ is obtained by fixing all twists $t_n\equiv 1/2$, it is called a {\it half-twist flute surface} (see Figure {3}). In \cite[Theorem 1.7]{BHS}, it is proved that if the lengths $\ell_n$ are concave and increasing then $X$ is complete if and only if $X$ {is parabolic}. Using different methods, we extend the validity of this result to all flute surfaces with twists $t_n\in\{ 0,1/2\}$ regardless of the convexity or size of  $\ell_n$. 

 \begin{figure}[h]
\leavevmode \SetLabels
\L(.12*.68) $t_1=0$\\
\L(.25*.2) $t_2=0$\\
\L(.37*.17) $t_3=\frac{1}{2}$\\
\L(.455*.85) $t_4=\frac{1}{2}$\\
\L(.63*.9) $t_5=0$\\
\L(.83*.8) $t_6=\frac{1}{2}$\\
\endSetLabels
\begin{center}
\AffixLabels{\centerline{\epsfig{file =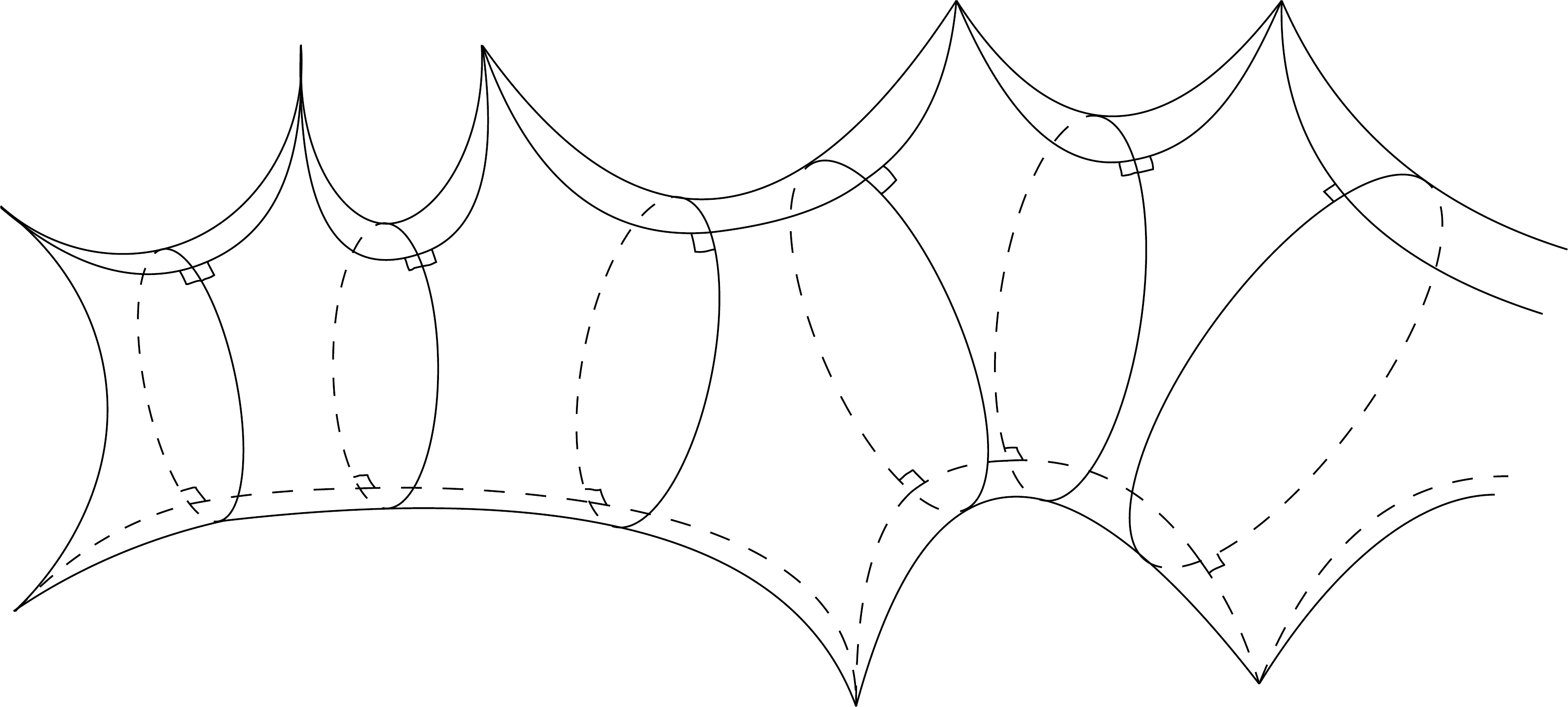,height=2.5in,width=4in,angle=0}}}
\vspace{-30pt}
\end{center}
\caption{The half and zero twists flute surface.} 
\end{figure}

\begin{theorem}
\label{thm:equiv-par-com}
Let $X$ be a flute surface whose twists satisfy $t_n\in\{ 0, 1/2\}$ for all $n$. The following are equivalent:
\begin{itemize}
\item the covering group of $X$ is of the first kind, i.e. $X$ is the union of geodesic pairs of pants without funnels or half-planes,
\item $X$ does not admit {a} Green's function, i.e. $X$ {is parabolic}.
\end{itemize}
\end{theorem}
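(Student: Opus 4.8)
The plan is to prove the two implications by different means: the direction ``parabolic $\Rightarrow$ first kind'' is general and uses only the equivalences recalled in the introduction, while the direction ``first kind $\Rightarrow$ parabolic'' is the substantial one and proceeds by exploiting the symmetry of $X$ together with Theorem~\ref{thm:symm-parabolic}. For the first direction, suppose $X=\mathbb{H}/\Gamma$ is parabolic. By the equivalences in the introduction, parabolicity amounts to the boundary at infinity of $X$ carrying zero harmonic measure (equivalently, the divergence of the Poincar\'e series of $\Gamma$). If $\Gamma$ were of the second kind, its domain of discontinuity on $\partial\mathbb{H}$ would be non-empty, so the convex core of $X$ would have a funnel or a geodesic half-plane attached along an open boundary geodesic; the ideal boundary of such a piece carries positive harmonic measure and supports a Green's function, contradicting parabolicity. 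Hence $\Gamma$ is of the first kind, and this implication needs no hypothesis on the twists.

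For the converse, assume $\Gamma$ is of the first kind, so that $X$ is the complete union of its geodesic pairs of pants with no funnels or half-planes. First I would check that $X$ is symmetric. A geodesic pair of pants carries an orientation-reversing isometric involution fixing its three seams; the hypothesis $t_n\in\{0,\tfrac12\}$ guarantees that along each cuff $\alpha_n$ the feet of the seams coming from the two adjacent pants are either equal (when $t_n=0$) or antipodal (when $t_n=\tfrac12$), and in both cases the pair of feet is preserved by the restriction of the pantwise reflection to $\alpha_n$. Consequently the pantwise involutions agree on every common cuff and patch into a global orientation-reversing involution $R:X\to X$ whose fixed set $R_f$ is a disjoint union of complete geodesics assembled from the seams, dividing $X$ into two isometric halves $X^*$ and $X^{**}$. (This is exactly where $t_n\in\{0,\tfrac12\}$ is used: for any other twist the seam feet are not preserved and no such reflection exists.) Taking $X_k$ to be the union of the first $k$ pairs of pants gives an $R$-invariant exhaustion by finite-area subsurfaces with compact geodesic boundary, so Theorem~\ref{thm:symm-parabolic} applies and reduces parabolicity of $X$ to the vanishing of the front-side moduli $\mathrm{mod}\,\Gamma_n^*$.

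It then remains to show $\mathrm{mod}\,\Gamma_n^*\to 0$ for a front side coming from a first-kind flute. The front side $X^*$ is a planar hyperbolic surface with geodesic boundary $R_f$ and a single accumulation end, tiled by the half-pants $P_k\cap X^*$ glued along the half-cuffs $\alpha_k^*=\alpha_k\cap X^*$ of length $\ell_k/2$, and every curve of $\Gamma_n^*$ must cross these half-cuffs in turn. Writing $\lambda=1/\mathrm{mod}$ for extremal length and denoting by $\lambda_k^*$ the extremal length of the arcs crossing the $k$-th slab of $X^*$, the serial rule gives
\[
\lambda(\Gamma_n^*)\ \ge\ \sum_{k=1}^{n-1}\lambda_k^*,
\]
so it suffices to prove $\sum_k\lambda_k^*=\infty$ whenever $\Gamma$ is of the first kind. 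I would estimate $\lambda_k^*$ from the hyperbolic geometry of the half-pants and show that if instead $\sum_k\lambda_k^*<\infty$ then the chain of half-collars opens into a funnel or a half-plane, contradicting first-kind-ness; this yields $\lambda(\Gamma_n^*)\to\infty$, that is $\mathrm{mod}\,\Gamma_n^*\to 0$, and hence $X$ is parabolic by Theorem~\ref{thm:symm-parabolic}.

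The main obstacle is this last estimate: the geometric control of the crossing extremal lengths $\lambda_k^*$ on the front side and the equivalence of their summability with the failure of first-kind-ness. The half-twists make it genuinely delicate, since a half-twist along cuffs of unbounded length is not a quasiconformal deformation, so one cannot simply compare $X$ with the zero-twist flute of Theorem~\ref{thm:zero-twist}, and the offset of the seam feet by $\ell_k/2$ alters the gluing of the half-pants and hence the controlling series. The point of the present theorem is to extract the clean dichotomy ``first kind $\Leftrightarrow$ parabolic'' from this comparison \emph{without} identifying the precise divergent series, the latter being the separate and harder task carried out in Theorem~\ref{thm:sum-flute-1/2}.
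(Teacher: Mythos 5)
Your reduction coincides with the paper's up to the point where everything hinges: the easy direction via funnels and half-planes is correct, the assembly of the global reflection $R$ from the pantwise involutions (this is where $t_n\in\{0,\tfrac12\}$ enters) matches the paper's front/back pentagon decomposition, and the passage to the front side via Theorem~\ref{thm:symm-parabolic} is exactly the paper's first step. The gap is in your final step, and it is not a missing computation but a strategy that provably cannot succeed: the implication ``$\Gamma$ of the first kind $\Rightarrow\sum_k\lambda_k^*=\infty$'' is false. The slab of $X^*$ between $\alpha_k^*$ and $\alpha_{k+1}^*$ is the front pentagon of the pair of pants with cuffs $\alpha_k$ and $\alpha_{k+1}$, so the crossing extremal length $\lambda_k^*$ depends only on $\ell_k$ and $\ell_{k+1}$ and not on the twists; the twists only decide how consecutive pentagons are glued along the half-cuffs. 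Hence $\sum_k\lambda_k^*$ is the same for the zero-twist and the half-twist flute with the same length data, whereas first-kindness is not. Concretely, take the Hakobyan slice lengths with $a=1$, $b=4$: then $\sum_n e^{-\ell_n/2}<\infty$, so the zero-twist flute with these lengths is of the second kind and not parabolic (Theorem~\ref{thm:zero-twist}); its moduli $\mathrm{mod}\,\Gamma_n^*$ are bounded below by a positive constant, so $\lambda(\Gamma_n^*)$ is bounded above and the serial rule forces $\sum_k\lambda_k^*\le\sup_n\lambda(\Gamma_n^*)<\infty$. Yet the half-twist flute with the \emph{same} lengths is of the first kind and parabolic because $\min\{a,b\}\le 2$ (Theorem~\ref{thm:half-twist-O_G}). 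For that surface your serial-rule lower bound is therefore finite and gives no information. This is exactly the phenomenon the introduction flags: any ``sum over slabs'' criterion is blind to the twists and, for half twists, is far from necessary.

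The paper escapes this by using that the front side $X^*$ is not merely planar with one end but \emph{simply connected}: it lifts isometrically to a single infinite ideal polygon $\tilde X^*\subset\mathbb{D}$; an escaping-ray argument shows that $\Gamma$ is of the first kind if and only if the sides of $\tilde X^*$ accumulate, besides the ideal vertices, at exactly one point of $S^1$; and in that case $\mathrm{mod}\,\Gamma_n^*\to 0$ follows from the classical soft fact that a curve family forced through a single point has zero modulus --- no quantitative slab estimate is needed, which is precisely why the theorem can separate the qualitative dichotomy from the series criterion later established in Theorem~\ref{thm:half-twist-O_G}. To repair your proof, replace the serial-rule step by an argument of this conformal type.
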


\begin{proof}
We first partition $X$ into tight geodesic pairs of pants as in Figure 3 and divide each pair of pants into front and back  geodesic pentagons, where pentagons have four right angles and one zero angle.  
Divide $X$ into front side $X^*$ and back side $X^{**}$ by drawing bi-infinite geodesics connecting appropriate cusps as in Figure 3. Since all twists are in $\{ 0,1/2\}$, it follows that the reflection in each pair of pants that maps front to back pentagons extends to a global orientation reversing isometry that preserves the set of bi-infinite geodesics. 

Let $X_n$ be the union of the first $n$ geodesic pairs of pants of $X$ as in Figure 3. Then each $X_n$ is a finite area hyperbolic surface with a single closed geodesic on its boundary of length $\ell_n$. Let $\Gamma_n$ be the curve family in $X_n\setminus X_1$ that connects the boundary components $\partial X_1$ and $\partial X_n$. Ahlfors-Sario \cite[Page 229]{AhlforsSario} and its extension \cite[Proposition 7.3]{BHS} states that $X$ {is parabolic} if and only if $\lim_{n\to\infty}\mathrm{mod}\Gamma_n=0$. Denote by $\Gamma_n^*$  a  subfamily of $\Gamma_n$ that lies in $X^*\cap X_n$. 
By Theorem \ref{thm:symm-parabolic},  $X$ {is parabolic} if and only if $\lim_{n\to\infty}\mathrm{mod}\Gamma_n^*=0$. 

The front side $X^*$ is simply connected and it has a single lift to the universal covering $\mathbb{D}$ that is isometric to it. The lift $\tilde{X}^*$ is an ideal polygon with infinitely many sides and each cusp of $X$ corresponds to a vertex of $\tilde{X}^*$ on $S^1$ (see Figure 4). The edges of the infinite polygon $\tilde{X}^*$ may accumulate to a single point on $S^1$ or to two points of $S^1$. If the accumulation is to two points on $S^1$ then the arc of $S^1$ between the two points is also accumulated by the polygon and the covering group of the surface $X$ is not of the first kind. 

 \begin{figure}[h]
\leavevmode \SetLabels
\L(.36*.5) $\tilde{\alpha}_1$\\
\L(.45*.7) $\tilde{\alpha}_2$\\
\L(.525*.3) $\tilde{\alpha}_3$\\
\L(.6*.7) $\tilde{\alpha}_4$\\
\L(.7*.3) $\tilde{\alpha}_5$\\
\L(.73*.6) $\tilde{\alpha}_6$\\
\endSetLabels
\begin{center}
\AffixLabels{\centerline{\epsfig{file =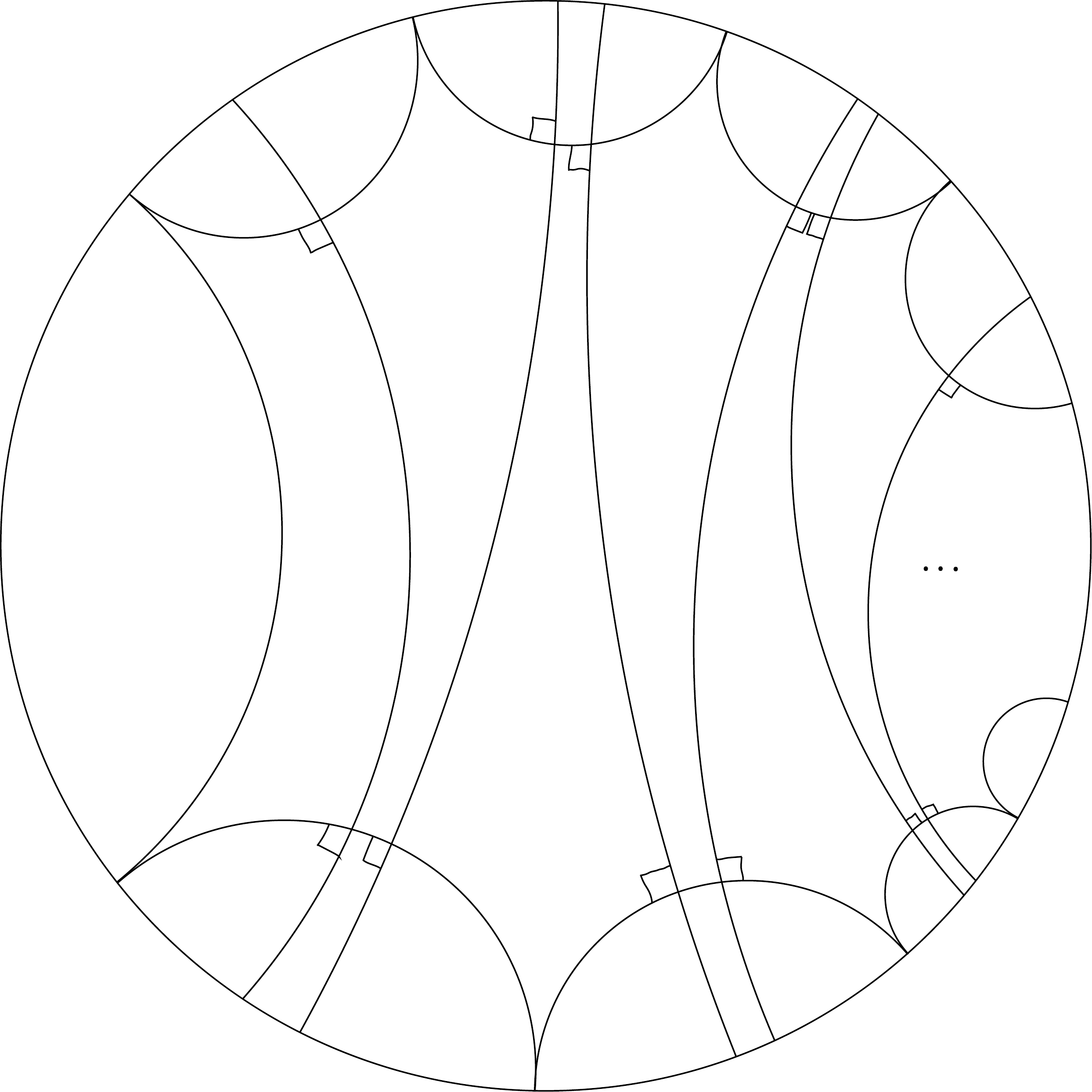,width=4in,width=4in,angle=0}}}
\vspace{-30pt}
\end{center}
\caption{Infinite polygon $\tilde{X}^*$ in $\mathbb{D}$ isometric to $X^*$.} 
\end{figure}

If the accumulation is to one point on $S^1$, we claim that the covering group is of the first kind. To see this, note that the covering group is of the first kind if and only if any geodesic ray starting in $X_1$ and exiting the union of the pairs of pants has infinite length (see \cite{BasmajianSaric}). Assume on the contrary that there exists a geodesic ray $r$ exiting each pair of pants {of} finite length. Then  
we map the connected components of the ray $r$ in $X^{**}$ by symmetry $R$ to geodesic arcs in $X^*$. We obtained a piecewise geodesic arc $r^*$ in $X^*$ which leaves every front pentagon and has finite length. 

Let $\tilde{r}^*$ be the lift of $r^*$ to $\mathbb{D}$ that is contained in the infinite sided polygon that is a lift of $X^*$. The closure of $\tilde{r}^*$ is a compact subset of $\mathbb{D}$. The geodesics $\tilde{\alpha}_n$ which are lifts of the cuffs $\alpha_n$ intersect $\tilde{r}^*$ and therefore converge to a geodesic $\tilde{\alpha}_{\infty}$. The endpoints of $\tilde{\alpha}_{\infty}$ are the accumulations of the sides of $\tilde{X}^*$ which contradicts the assumption that the accumulation is one point on $S^1$. Therefore the covering group is of the first kind.

Thus the covering group is of the first kind if and only if the sides of the infinite polygon accumulate to one point on $S^1$.

If the covering group is not of the first kind then $X$ {is not parabolic}. It remains to be proved that if the polygon accumulates to a single point on $S^1$ then $X$ {is parabolic}. Indeed, the family of curves $\Gamma_n^*$ is converging to the family of curves that have one endpoint equal to the accumulation point of the infinite polygon. It is well-known that the curve family in the plane that goes through a point has zero modulus (see \cite{Garnett}). Thus $X$ is parabolic in this case by Theorem \ref{thm:symm-parabolic}.
\end{proof}

As a  direct corollary to the above proof we obtain

\begin{corollary}
\label{cor:complete-one-sided}
A symmetric flute surface $X$ has covering group of the first kind if and only if, in addition to its ideal vertices, the infinite ideal polygon in $\mathbb{D}$ that is a lift of the front side of $X$ accumulates to a single point on $S^1$.
\end{corollary}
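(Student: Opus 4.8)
The plan is to extract the equivalence directly from the argument already carried out in the proof of Theorem \ref{thm:equiv-par-com}, where both implications were in fact established while analyzing the infinite ideal polygon $\tilde{X}^*$. Recall from that proof that the sides of $\tilde{X}^*$ are precisely the lifts $\tilde{\alpha}_n$ of the cuffs $\alpha_n$, that the ideal vertices of $\tilde{X}^*$ on $S^1$ are the lifted cusps of $X$, and that a nested sequence of sides accumulates either to a single point of $S^1$ or to two distinct points of $S^1$; in the latter case the entire boundary arc between those two points is also accumulated by the polygon.

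First I would dispatch the contrapositive of the direction ``first kind $\Rightarrow$ single point accumulation.'' Suppose the sides accumulate to two distinct points $p,q\in S^1$. Then the open arc of $S^1$ between $p$ and $q$ that is cut off by the polygon carries no vertices and no accumulation of sides, hence lies in the domain of discontinuity of $\Gamma$. Consequently $\Gamma$ is of the second kind, i.e. not of the first kind.

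The substantive step is the converse: single-point accumulation forces $\Gamma$ to be of the first kind. Here I would invoke the criterion from \cite{BasmajianSaric} that $\Gamma$ is of the first kind exactly when every geodesic ray issuing from $X_1$ and eventually leaving every pair of pants has infinite length. Arguing by contradiction, assume a finite-length exiting ray $r$ exists. Folding the back-side segments of $r$ into the front side by means of the reflection $R$ produces a piecewise-geodesic arc $r^*\subset X^*$ of finite length that leaves every front pentagon, and its lift $\tilde{r}^*$ to $\mathbb{D}$ then has compact closure $\overline{\tilde{r}^*}$ inside the open disk. Since each of the infinitely many sides $\tilde{\alpha}_n$ crossed by $\tilde{r}^*$ meets this compact set, they converge to a bi-infinite geodesic $\tilde{\alpha}_\infty$ whose two endpoints on $S^1$ are distinct. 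But those endpoints are accumulation points of the sides of $\tilde{X}^*$, contradicting the single-point hypothesis. Hence no finite-length exiting ray exists and $\Gamma$ is of the first kind.

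The hard part is this converse, and specifically the claim that $\overline{\tilde{r}^*}$ is a compact subset of the open disk: one must verify that folding the back-side pieces of $r$ into $X^*$ keeps the total length finite and that the resulting lifted arc stays a definite distance from $S^1$, which is exactly what forces the crossing sides to converge to a genuine bi-infinite geodesic rather than to a single boundary point. Since all of these points were already checked in the proof of Theorem \ref{thm:equiv-par-com}, the corollary follows at once, with the dichotomy ``one versus two accumulation points'' supplying the two cases.
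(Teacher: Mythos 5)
Your argument is correct and is essentially identical to the paper's: the corollary is extracted from the proof of Theorem \ref{thm:equiv-par-com} via the same dichotomy (one versus two accumulation points on $S^1$), with the two-point case yielding an arc of discontinuity and the one-point case handled by folding a hypothetical finite-length exiting ray into $X^*$ and deriving a contradiction from the compact closure of its lift. Nothing further is needed.
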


\subsection{Parabolicity of surfaces with finitely many ends}
\label{sec:finite-ends-par}
In this subsection we will consider Riemann surfaces that have infinite genus and finitely many non-planar topological ends. In other words, we assume that $X$ is a Riemann surface obtained by taking a finite area hyperbolic surface $X_0$ whose boundary consists of finitely many closed geodesics $\{\delta_1,\ldots ,\delta_i\}$ and gluing to each boundary component $\delta_i$ a hyperbolic surface $X_i$ with one boundary geodesic and infinite genus that accumulates to a single topological end (see Figure 5). The attached surfaces $X_i$ can be thought of as tails of infinite Loch-Ness monster surfaces.

\begin{figure}[h]
\leavevmode \SetLabels
\L(.52*.7) $X_0$\\
\L(.8*.7) $X_1$\\
\L(.49*.2) $X_2$\\
\L(.2*.72) $X_3$\\
\L(.63*.615) $\delta_1$\\
\L(.485*.475) $\delta_2$\\
\L(.34*.64) $\delta_3$\\
\endSetLabels
\begin{center}
\AffixLabels{\centerline{\epsfig{file =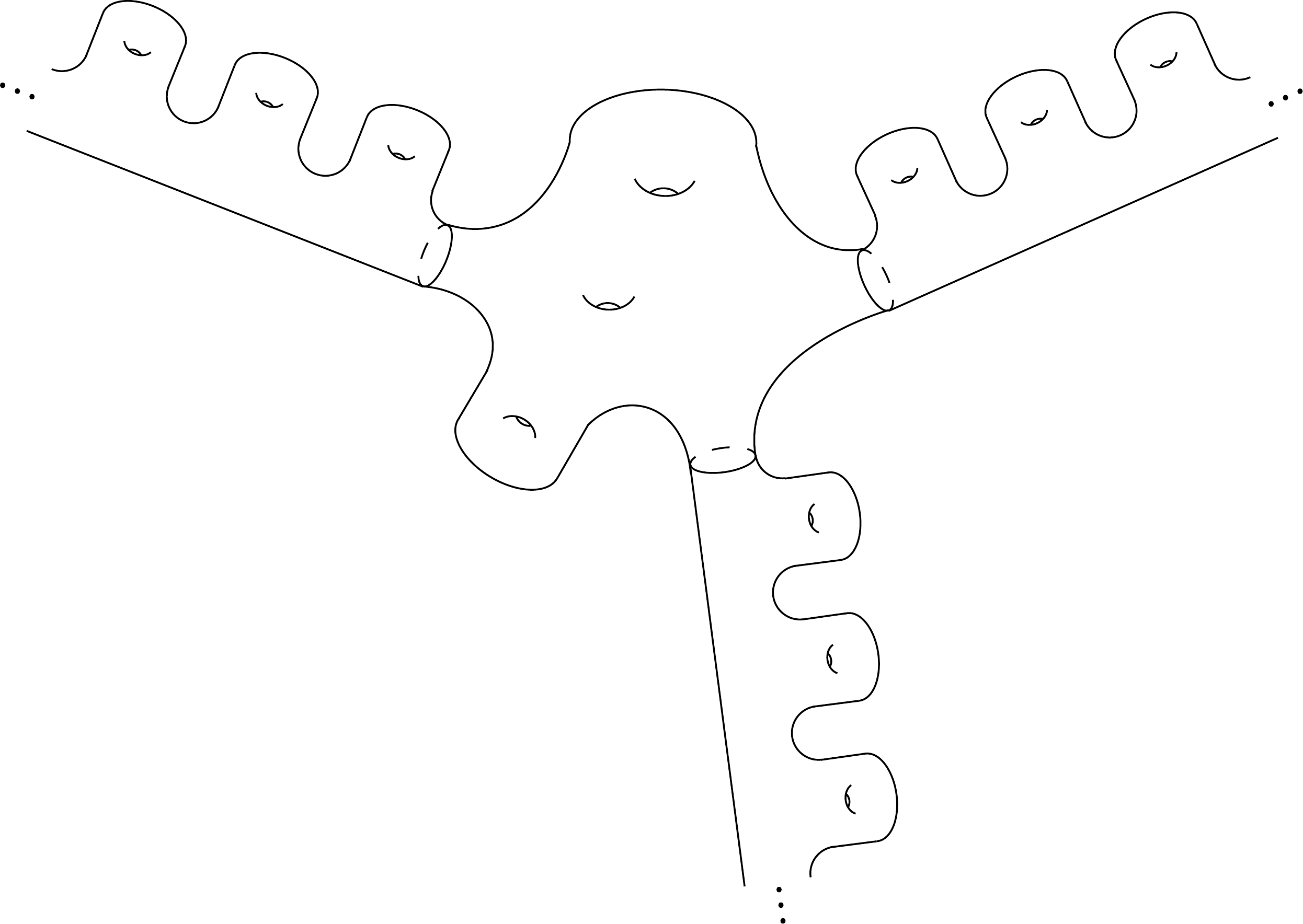,width=4in,height=3in,angle=0} }}
\vspace{-20pt}
\end{center}
\caption{A surface with finitely many non-planar ends.} 
\end{figure}

For each attached surface $X_i$, let $\beta_n$ be closed geodesics that cut off a  genus one surface with geodesic boundary and let $\alpha_n$ be closed geodesics that accumulate towards the infinite end (see Figure 6). We also add another simple closed geodesic $\gamma_n$ in each torus cut off by $\beta_n$. The surface $X_i$ is divided into geodesic pairs of pants by the family of geodesics $\{\delta_i\}\cup \{\alpha_n,\beta_n,\gamma_n\}_n$. Divide each pair of pants of $X_i$ into two right angled hexagons using orthogeodesic arcs between cuffs $\{\delta_k\}\cup \{\alpha_n,\beta_n,\gamma_n\}_n$.

 \begin{figure}[h]
\leavevmode \SetLabels
\L(.05*.2) $\delta_i$\\
\L(.2*.4) $\beta_1$\\
\L(.45*.4) $\beta_2$\\
\L(.69*.4) $\beta_3$\\
\L(.325*.2) $\alpha_1$\\
\L(.56*.2) $\alpha_2$\\
\L(.787*.2) $\alpha_3$\\
\L(.245*.84) $\gamma_1$\\
\L(.475*.84) $\gamma_2$\\
\L(.705*.84) $\gamma_3$\\
\endSetLabels
\begin{center}
\AffixLabels{\centerline{\epsfig{file =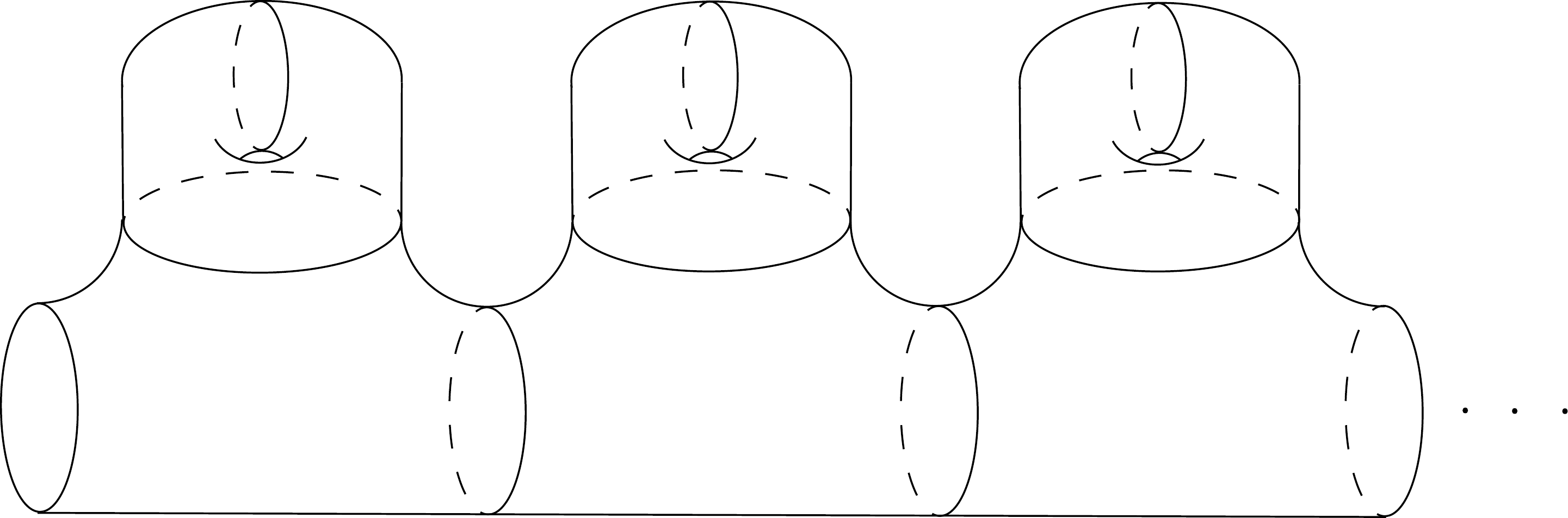,width=4in,height=2in,angle=0} }}
\vspace{-20pt}
\end{center}

\caption{The pants decomposition of an end surface.} 
\end{figure}

 If the foots of the orthogeodesics from one side of $ \{\alpha_n,\beta_n,\gamma_n\}_n$ meet foots of the {orthogeodesics} from the other side of an end surface $X_i$ then there is a natural decomposition of $X_i$ into front side $X_i^*$ and back side $X_i^{**}$. Choose one of the two hexagons in the pair of pants with cuff $\delta_i$. Then there is a hexagon in the next pair of pants across $\alpha_1$ that shares a side with the front hexagon. This second hexagon is called a front hexagon and we continue in this manner across all $\alpha_n$ to obtain a family of front hexagons converging to the end of $X_i$. For each $\beta_n$ there is a unique hexagon across $\beta_n$ that meets the front hexagons and we will also call {those} hexagons front hexagons. The union of all front hexagons make a connected subsurface $X_i^*$ and the union of the back hexagons makes the complementary connected subsurface $X_i^{**}$. 
 By the construction, there is an orientation reversing isometry $R$ of $X_i$ such that $R(X_i^*)=X_i^{**}$ which maps front hexagons onto back hexagons (see Figure 7). In this situation, the end surface is said to be {\it symmetric}. The surface $X$ is said to have {\it symmetric ends} if every $X_i$ is symmetric.

 \begin{figure}[h]
\begin{center}
\AffixLabels{\centerline{\epsfig{file =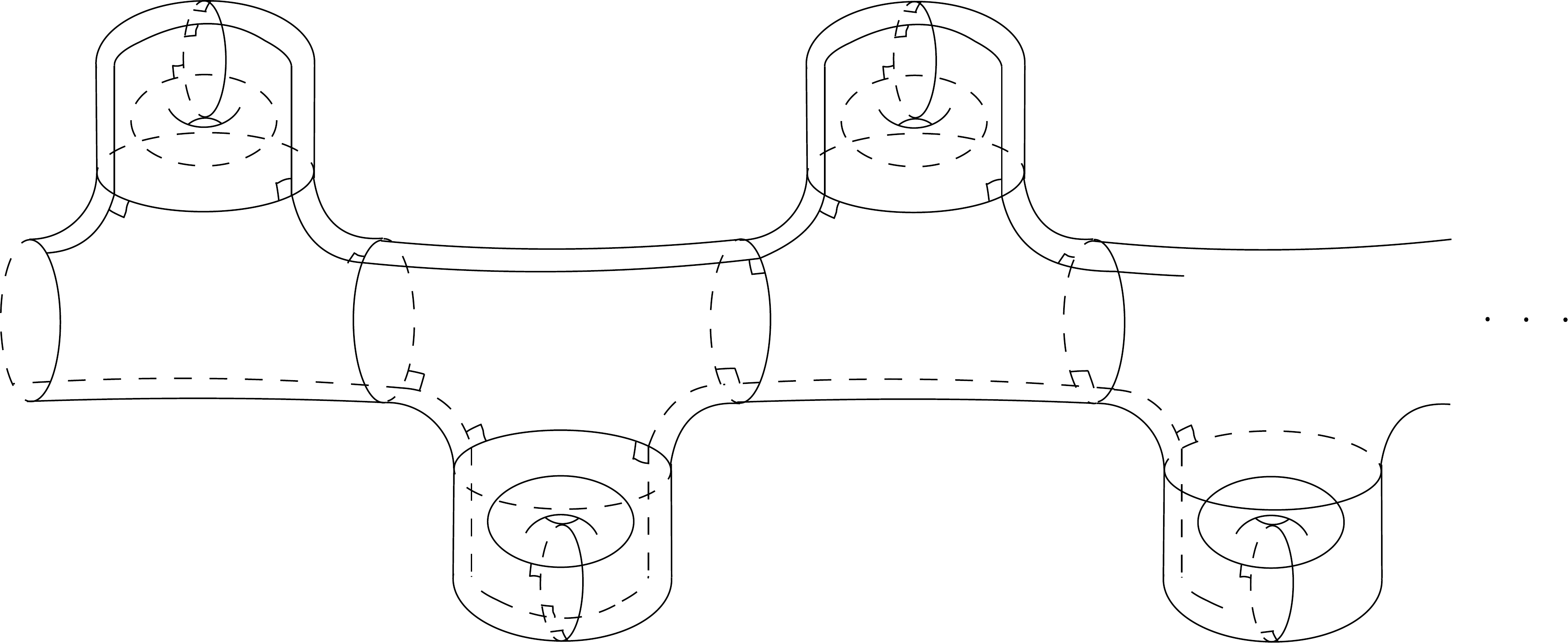,width=4.5in,height=2in,angle=0} }}
\caption{A symmetric end surface with half twists.} 
\end{center}
\end{figure}

\begin{theorem}
\label{thm:symmetric-finite-ends}
Let $X$ be a Riemann surface with finitely many symmetric non-planar ends such that the lengths of the geodesics $\beta_n$ and $\gamma_n$ from Figure 6 are between two positive constants. Then the following are equivalent:
\begin{itemize}
\item the Fuchsian covering group of $X$ is of the first kind,
\item the Riemann surface $X$ admits no Green's function. 
\end{itemize}
\end{theorem}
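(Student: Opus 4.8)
The plan is to reduce the statement to the half/zero-twist flute case already settled in Theorem \ref{thm:equiv-par-com}, treating each handle as a geometrically bounded perturbation. As for flutes, the direction ``parabolic $\Rightarrow$ first kind'' is the classical one: a Fuchsian group of the second kind has a nonempty interval of discontinuity, so the convex core of $X$ is a proper subsurface and $X$ carries a funnel whose ideal boundary has positive harmonic measure, hence a Green's function; thus $X\in O_G$ forces $\Gamma$ to be of the first kind. The content is therefore the converse. To prove it I would first localize to a single end: the core $X_0$ is compact, and a curve of $\Gamma_n$ reaching $\partial X_n$ must exit through exactly one end surface $X_i$, so the subfamilies $\Gamma_n^{(i)}$ are supported in the disjoint regions $X_i$, their moduli add, and (there being finitely many ends) $\lim_n\mathrm{mod}\,\Gamma_n=0$ iff $\lim_n\mathrm{mod}\,\Gamma_n^{(i)}=0$ for every $i$. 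By \cite[Page 229]{AhlforsSario} and \cite[Proposition 7.3]{BHS}, $X$ is parabolic iff $\lim_n\mathrm{mod}\,\Gamma_n=0$, so it suffices to analyze one symmetric end surface at a time.

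Next I would invoke the symmetry. Each $X_i$ carries the orientation reversing isometry $R_i$ interchanging front and back hexagons, and the argument proving Theorem \ref{thm:symm-parabolic} is purely local to $X_i$: symmetrizing an allowable metric for the front subfamily $\Gamma_n^{(i),*}$ across $R_i$ yields an allowable metric for $\Gamma_n^{(i)}$ of twice the area. Hence $\lim_n\mathrm{mod}\,\Gamma_n^{(i)}=0$ iff $\lim_n\mathrm{mod}\,\Gamma_n^{(i),*}=0$, and the problem becomes one of estimating moduli on the front side $X_i^*$.

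The main step is to compare $X_i^*$ with the front side of a flute. Conceptually, cutting $X_i$ along every $\beta_n$ removes the one-holed tori and leaves a planar surface whose cuffs are $\delta_i$ together with the $\alpha_n$; replacing each handle by a cusp produces a flute surface with twists in $\{0,\tfrac12\}$ along the $\alpha_n$, to which Theorem \ref{thm:equiv-par-com} and Corollary \ref{cor:complete-one-sided} apply. The hypothesis that $\ell(\beta_n)$ and $\ell(\gamma_n)$ lie between two fixed positive constants is exactly what makes each excised handle uniformly bounded in the hyperbolic metric, so that a front-side curve can be rerouted around a handle at a cost bounded independently of $n$. Rigorously, rather than performing surgery I would build an allowable metric on $X_i^*$ directly from one on the flute front side, setting it equal to $+\infty$ on the bounded handle regions; since a definite, $n$-independent amount of area per handle has no effect on whether the limiting modulus vanishes, the two front moduli are comparable up to a multiplicative constant. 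The same boundedness shows that a geodesic ray leaving the pants accrues only bounded length inside each handle, so first-kind-ness is governed by the $\alpha_n$ exactly as for the capped flute: the lifts $\widetilde{\alpha}_n$ accumulate to a single point of $S^1$ precisely when $\lim_n\mathrm{mod}\,\Gamma_n^{(i),*}=0$. Combining these comparisons closes the equivalence.

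I expect the genuine obstacle to be the topology of the front side. Unlike the planar flute, $X_i^*$ need not be simply connected, because the reflection-compatible gluing of the two $\gamma_n$-sides of a handle hexagon can identify two sides of the front region and thereby create nontrivial loops, so $\widetilde{X_i^*}$ is no longer a single embedded ideal polygon in $\D$. The delicate point is to show that these loops, each cut off by the bounded curves $\beta_n$ and $\gamma_n$, can be quarantined inside uniformly bounded subsurfaces and hence do not affect the vanishing of $\lim_n\mathrm{mod}\,\Gamma_n^{(i),*}$; once the handles are isolated in this way, the single-accumulation-point analysis of Theorem \ref{thm:equiv-par-com} transfers to the flute spine and, together with the classical direction above, yields the stated equivalence.
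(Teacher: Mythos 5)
Your skeleton matches the paper's: the direction ``parabolic $\Rightarrow$ first kind'' is classical, the moduli localize to the finitely many ends because the subfamilies have disjoint supports, and Theorem \ref{thm:symm-parabolic} reduces everything to the front side $X_i^*$. You also correctly identify the real obstacle, namely that $X_i^*$ is not simply connected (while $X_i^*\setminus(\cup_n\gamma_n)$ is), so the ``curves through a single boundary point'' argument only covers curves that avoid the $\gamma_n$.

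The step you offer to close this gap does not work. Setting an allowable metric equal to $+\infty$ on the handle regions makes $\iint\rho^2\,dx\,dy=\infty$, since those regions have positive area; and the softened version --- ``a definite, $n$-independent amount of area per handle has no effect'' --- is false, because $X_i^{n*}$ contains on the order of $n$ handles, so a fixed positive area contribution per handle adds total area growing linearly in $n$ and destroys the conclusion $\lim_n\mathrm{mod}\,\Gamma_i^{n*}=0$. (The $\rho=\infty$ trick in Theorem \ref{thm:symm-parabolic} is legitimate only because $R_f$ has measure zero.) Likewise, ``replacing each handle by a cusp'' to reduce to a flute is a surgery whose effect on the conformal structure, hence on moduli and on first-kindness, is not controlled without further argument. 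The paper's resolution is different and essential: first use Shiga's theorem to apply a global quasiconformal map making all cut-off tori isometric with zero twists on $\beta_n,\gamma_n$ (this is where the two-sided bounds on $\ell(\beta_n),\ell(\gamma_n)$ enter); then construct a single quasiconformal map $f$ of $X_i^*$ \emph{into} $X_i^*\setminus(\cup_n\gamma_n)$ by squeezing the front half of each torus into the standard half-collar about $\beta_n$ (one explicit map, built from conformal maps to an annulus and a rectangle plus a vertical contraction, reused for every $n$ since the tori are isometric). Quasi-invariance of the modulus under this one map, combined with the vanishing modulus of the curve family in the simply connected piece $X_i^*\setminus(\cup_n\gamma_n)$, finishes the proof. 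Without some substitute for this uniform quasiconformal ``quarantine'' of the handles, your argument has a genuine gap at its main step.
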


\begin{remark}
Note that {the} $\alpha_n$ are not assumed to be bounded. In fact, the most interesting applications are when the lengths of $\alpha_n$ go to infinity.
\end{remark}

\begin{proof}
It is enough to prove that if the covering group of $X$ is of the first kind then $X$ {is parabolic}. 
Under the assumptions on $X$, each end surface $X_i$ is symmetric and the two sides $X_i^*$ and $X_i^{**}$ are planar but not simply connected.

We define a compact exhaustion of $X$ by taking $X^n$ to be the compact subsurface  of $X$ whose boundary components are geodesics $\alpha_n$ in each $X_i$ for $i=1,\ldots ,k$.  
Let $\Gamma^n$ be the family of curves in $X^n\setminus X_0$ that connects the boundary of $ X_0$ with the boundary of $X^n$. By Ahlfors-Sario \cite{AhlforsSario}, it is enough to prove that $\lim_{n\to\infty}\mathrm{mod}\Gamma^n =0$. Denote by $\Gamma_i^n$ the subfamily of $\Gamma^n$ that lies in $X_i^n$. Then $\Gamma^n$ is a disjoint union of $\Gamma_i^n$ for $i=1,\ldots ,k$. Since the supports of $\Gamma_i^n$ are pairwise disjoint, we have
$$
\mathrm{mod}\Gamma^n =\sum_{i=1}^k\mathrm{mod}\Gamma_i^n.
$$
Therefore, it is enough to prove that $\lim_{n\to\infty}\mathrm{mod}\Gamma_i^n=0$ for all $i=1,\ldots ,k$. 

We recall that the modulus is a quasiconformal quasi-invariant. Therefore $\mathrm{mod}\Gamma_i^n$ tends to zero on $X_i$ if and only if it is tends to zero on a quasiconformal image of $X_i$. Since the lengths of $\beta_n$ and $\gamma_n$ are between two positive constants, there exists a quasiconformal map onto an infinite surface where $\beta_n$, $\gamma_n$ and the twists on these closed geodesics are all equal to zero (see Shiga \cite{Shiga}). Without loss of generality, we can  assume that all tori cut off by the $\beta_n$ in $X_i$ are isometric.

The {front of the surface} $X_i^*$  is not simply connected (see Figure 8). However,  $X_i^*\setminus (\cup_n\gamma_n)$ is simply connected and it has a lift to the universal cover as in Figure 8. Let $X_i^n=X^n\cap X_i$ and $X_i^{n*}=X_i^n\cap X_i^*$ be the front side of $X_i^n$. Let $\Gamma_i^{n*}$ be the subfamily of $\Gamma_i^n$ that lies entirely in $X_i^{n*}$. By Theorem \ref{thm:symm-parabolic} it is enough to prove that, for all $i=1,\ldots ,k$,
$$
\lim_{n\to\infty}\mathrm{mod}\Gamma_i^{n*}=0.
$$

 \begin{figure}[h]
\begin{center}
\AffixLabels{\centerline{\epsfig{file =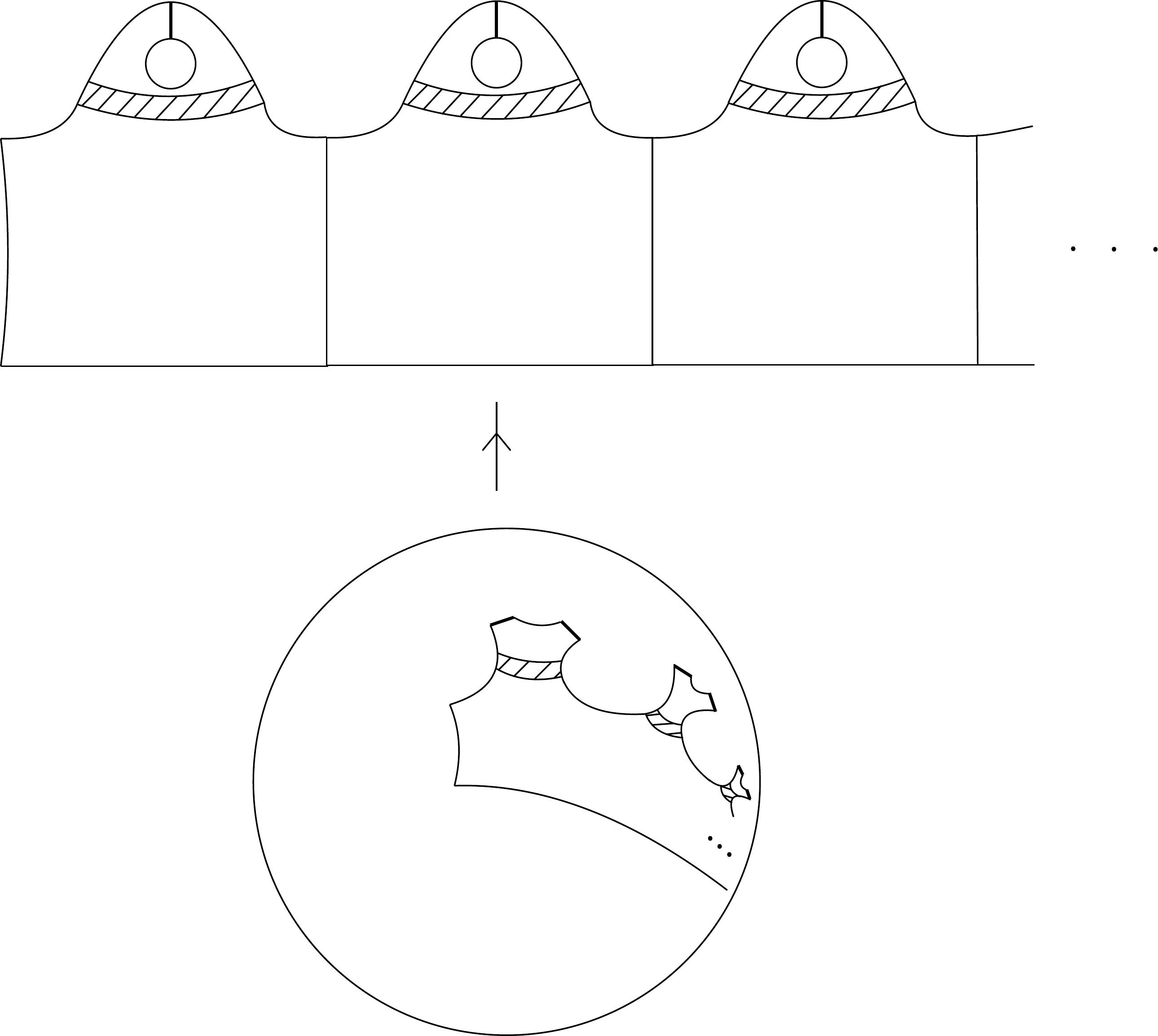,width=4in,height=3in,angle=0} }}
\end{center}
\caption{The front side $X_i^*$ and its lift to the universal covering.} 
\end{figure}

Since $X$ has covering group of the first kind, it follows that a single component of the lift to the universal covering $\mathbb{D}$ of $X_i^*\setminus (\cup_n\gamma_n)$ accumulates to exactly one point on $S^1$. Therefore the family of all curves connecting  $\delta_i\cap X_i^*$ to the end of $X_i^*$ and not intersecting the family $\{ \gamma_n\}_n$ has zero modulus. Indeed, since this curve family lies in $X_i^*\setminus (\cup_n\gamma_n)$ and the lifting map is conformal, it follows that this curve family is mapped by a conformal map to a curve family in the complex plane that passes through a single point. It is classical that the modulus is zero for such curve families (see \cite{Garnett}) and that the limit of the modulus of a sequence of curve families whose curves converge to curves passing through a point is zero.

 However, the above argument does not apply to curves in $X_i^*$ that intersect the family $\{\gamma_n\}_n$. To show that the limit of the modulus of a sequence  is zero for the sequence of curve families  connecting $\delta_i\cap X_i^*$ and $\alpha_n\cap X_i^{n*}$ inside $X_i^{n*}$, we will find a quasiconformal map that maps $X_i^{*}$ into the lift of $X_i^*\setminus (\cup_n\gamma_n)$.  This will finish the proof since a quasiconformal map quasi-preserves the modulus and we already established that the limit of the modulus of the sequence of curve families in $X_i^*\setminus (\cup_n\gamma_n)$ is zero.

To find the quasiconformal map, consider the standard half-collar around each $\beta_n$ that is inside the torus cut off by $\beta_n$. The half-collar is disjoint from $\gamma_n$ and it is  contained in $X_i^*\setminus (\cup_n\gamma_n)$. Let $X_i'$ be $X_i^*$ minus the tori cut off by $\beta_n$ union the half-collars around $\beta_n$. We will find a quasiconformal map $f$ from $X_i^*$ to (but not onto) $X_i'$. We define $f$ to be the identity on $X_i^*$ minus the tori and on $\beta_n\cap X_i^*$. It remains to find a quasiconformal map which maps the front half of the tori cut off by $\beta_n$ to the front half of the half collar that is the identity on $\beta_n\cap X_i^*$. Since all tori are quasiconformal, it is enough to find one such quasiconformal map and use it for all $n$.

 \begin{figure}[h]
\leavevmode \SetLabels
\L(.05*.9) $T_n$\\
\L(.2*.643) $\beta_n$\\
\L(.43*.83) $\mathrm{conf.}$\\
\L(.76*.6) $b_n$\\
\L(.8*.47) $\mathrm{conf.}$\\
\L(.5*.2) $g$\\
\L(.37*.1) $g(Q_n)$\\
\endSetLabels
\begin{center}
\AffixLabels{\centerline{\epsfig{file =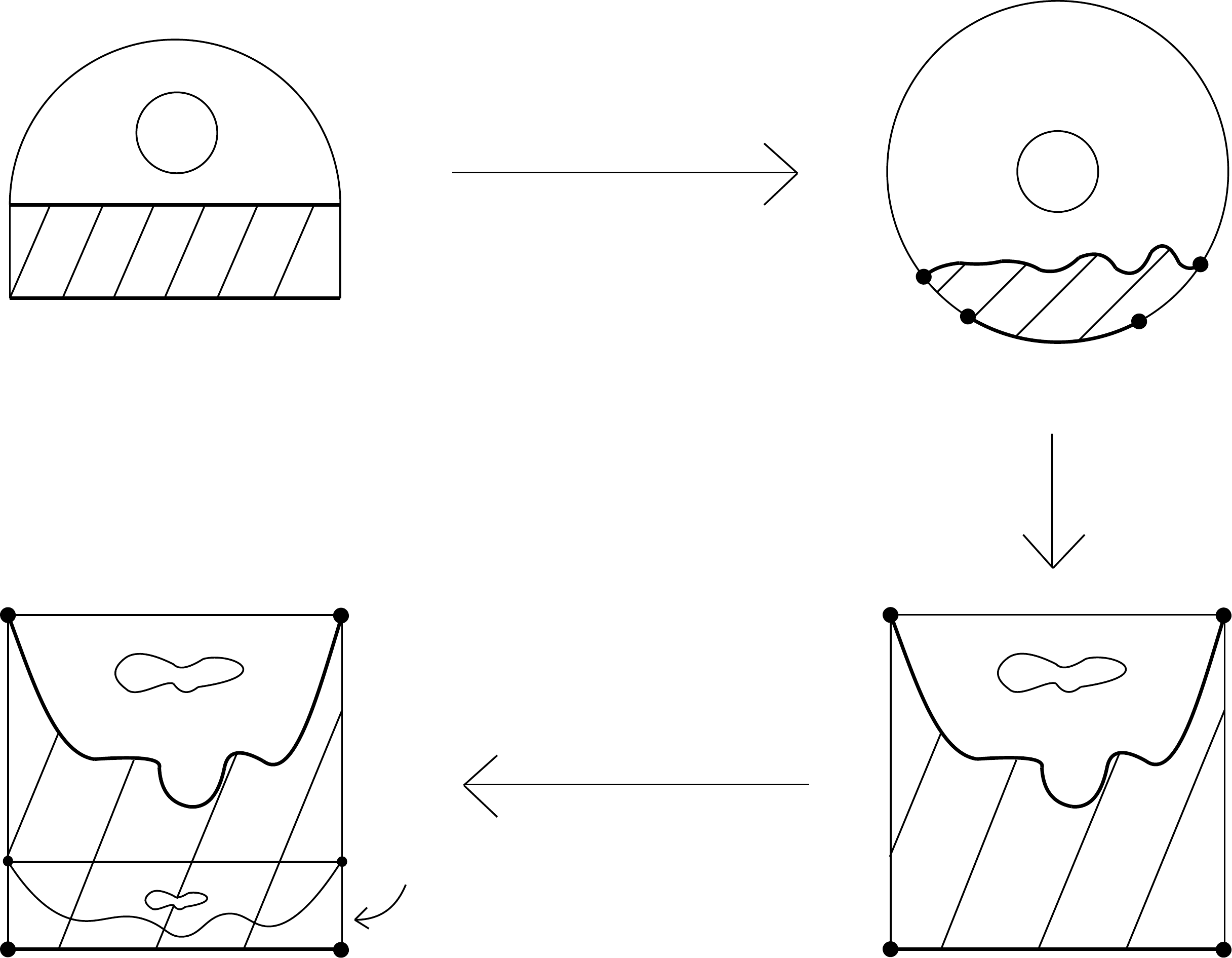,width=4in,height=3in,angle=0} }}
\end{center}
\caption{The quasiconformal map.} 
\end{figure}

The front half $T_n$ of the tori cut off by $\beta_n$ is a doubly connected region whose outside boundary consists of $\beta_n\cap X_i^*$ and an orthogeodesic from $\beta_n$ to itself and the inner boundary consists of the orthogeodesic from $\gamma_n$ to itself (see Figure 9). We map $T_n$ conformally to a Euclidean annulus $\{ z: r\leq |z|\leq 1\}$. Denote the image of $\beta_n\cap X_i^*$ by $b_n\subset \{ |z|=1\}$. The boundary of the half collar is mapped to an arc $c_n$ in $\{ z: r< |z|\le 1\}$ with endpoints in $\{ |z|=1\}\setminus b_n$. The arc $c_n$ separates $b_n$ from the inner boundary $\{ |z|=r\}$. Choose a conformal map from {$\{ z: |z|\le 1\}$} onto a rectangle of height $1$  such that $b_n$ is the bottom horizontal side on the real axis and the endpoints of $c_n$ are the two top vertices. By the abuse of notation, we will denote by $c_n$ the image of $c_n$ in the rectangle. The curve $c_n$ has {one} lowest point with positive height $h$. The map $g$ which shrinks vertically the rectangle by the factor $h$ is mapping the image of $T_n$ in the rectangle under the arc $c_n$ (which is the image of the half-collar). Note that $g$ has quasiconformal constant $1/h$. By conjugating $g$ with the composition of the above conformal maps we obtain a quasiconformal map $f$ that sends $T_n$ to the front of the half-collar around $\beta_n$. This finishes the proof.
\end{proof}

As a  direct corollary to the above proof we obtain

\begin{corollary}
\label{cor:complete-one-sided-finite-ends}
A symmetric  surface $X$ with finitely many end surfaces $X_i$, for $i=1,2,\ldots n$, has covering group of the first kind if and only if, for each $i\in \{1,2,\ldots ,n\}$, the infinite  polygon in $\mathbb{D}$ that is a lift of $X_i^*\setminus (\cup_n\gamma_n)\subset X$ accumulates to a single point on $S^1$.
\end{corollary}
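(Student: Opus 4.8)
The plan is to follow the template of Corollary~\ref{cor:complete-one-sided}, carrying out the argument one end at a time and supplying the two ingredients that are genuinely new in the finite-ends setting: the passage from the non--simply connected front side $X_i^*$ to the simply connected surface $X_i^*\setminus(\cup_n\gamma_n)$, and the control of how an exiting ray meets the handle curves $\gamma_n$. Throughout I would use the criterion recorded earlier that $\Gamma$ is of the first kind if and only if every path exiting the union of pairs of pants toward an end has infinite length; since the core $X_0$ has finite area, funnels and half-planes can be attached only inside the ends $X_i$, so first kind is equivalent to this completeness holding at each end separately and it suffices to treat a single index $i$.

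The forward implication (first kind $\Rightarrow$ single point) is essentially extracted inside the proof of Theorem~\ref{thm:symmetric-finite-ends}, where it is shown that when $\Gamma$ is of the first kind a single component of the lift of $X_i^*\setminus(\cup_n\gamma_n)$ accumulates to exactly one point of $S^1$. I would phrase this through the nested lifts $\tilde\alpha_n$ of the cuffs $\alpha_n$, which are the sides of the infinite polygon $\tilde P_i$ that march out toward the end: their endpoints lie in $\Lambda(\Gamma)$ and, being nested, they converge to a (possibly degenerate) geodesic $\tilde\alpha_\infty$, and completeness of the end forces this limit to be a single boundary point.

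For the converse (single point $\Rightarrow$ first kind) I would argue by contradiction exactly as in the flute case. Assuming $\Gamma$ is not of the first kind, I pick a finite-length geodesic ray $r$ exiting the pairs of pants of $X_i$ and reflect its back pieces by the symmetry $R$ to obtain a finite-length piecewise geodesic arc $r^*\subset X_i^*$ that still exits toward the end. Here is the new point: because the lengths $\ell(\gamma_n)$ are bounded above, the standard collars of the $\gamma_n$ have width bounded below by a fixed $w>0$ and are pairwise disjoint, so the finite-length arc $r^*$ can cross $\cup_n\gamma_n$ only finitely often. Hence the tail of $r^*$ lies entirely in $X_i^*\setminus(\cup_n\gamma_n)$ and lifts to a single connected arc $\tilde r^*$ of finite length inside the polygon $\tilde P_i$, whose closure is a compact subset of $\mathbb{D}$.

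Finally, this tail crosses $\alpha_n$ for all large $n$, so the corresponding sides $\tilde\alpha_n$ of $\tilde P_i$ all meet the fixed compact set $\overline{\tilde r^*}$; a nested sequence of geodesics that all meet a common compact subset of $\mathbb{D}$ cannot shrink to a boundary point, so it converges to a bi-infinite geodesic $\tilde\alpha_\infty$ with two distinct endpoints, and these are accumulation points of the sides of $\tilde P_i$. This contradicts the single-point accumulation hypothesis and finishes the proof. I expect the main obstacle to be precisely the $\gamma_n$-crossing step, namely ensuring that replacing the front side $X_i^*$ by the simply connected surface $X_i^*\setminus(\cup_n\gamma_n)$ does not destroy the escaping finite-length arc; this is exactly where the hypothesis that $\ell(\beta_n)$ and $\ell(\gamma_n)$ stay between two positive constants is indispensable.
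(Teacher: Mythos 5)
Your proposal is correct and follows essentially the same route as the paper, which derives this corollary directly from the ray-reflection argument of Theorem \ref{thm:equiv-par-com} together with the setup in the proof of Theorem \ref{thm:symmetric-finite-ends} (nested lifts $\widetilde{\alpha_n}$ as in Remark \ref{rem:nested-alpha-acc}, contradiction via a finite-length exiting arc whose lift stays in a compact subset of $\mathbb{D}$). Your explicit collar-width argument showing that a finite-length exiting ray crosses $\cup_n\gamma_n$ only finitely often, so that its tail lies in the simply connected surface $X_i^*\setminus(\cup_n\gamma_n)$, is exactly the point the paper leaves implicit and is correctly handled using the bounds on $\ell(\gamma_n)$.
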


\begin{remark}
\label{rem:nested-alpha-acc}
For a fixed lift of $X_i^*\setminus (\cup_n\gamma_n)$ in $\mathbb{D}$, let $\widetilde{\alpha_n}$ be the lift of the closed geodesic $\alpha_n$ that connects {its} two  boundary sides. Then the accumulation to one point of $S^1$ in the above corollary is equivalent to the accumulation of the nested sequence $\widetilde{\alpha_n}$ to one point of $S^1$.
\end{remark}

\section{The parabolicity of symmetric surfaces via cuff lengths}

In this section we decide when a symmetric Riemann surface is parabolic from its Fenchel-Nielsen coordinates. The result will complement some of the results obtained in \cite{BHS}.

\subsection{The half-twist flute surfaces} Let $X$ be a flute surface with the Fenchel-Nielsen coordinates $\{ (\ell_n,t_n\equiv 1/2)\}_n$. This surface is called a {\it half-twist} flute surface since we always twist by one half. We establish the following theorem:

\begin{theorem}
\label{thm:half-twist-O_G}
Let $X=\{ (\ell_n,1/2)\}_n$ be a half-twist flute surface with increasing sequence of cuff lengths $\ell_n$. Then $X$ is parabolic if and only if
$$
\sum_{n=1}^\infty e^{-\sigma_n/2}=\infty , 
$$
where $\sigma_n=\ell_n-\ell_{n-1}+\cdots +(-1)^{n-1}\ell_1$.
\end{theorem}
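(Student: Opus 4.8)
The plan is to route the entire statement through the property that the covering group is of the first kind, which I abbreviate (FK). Since every twist of a half-twist flute surface equals $1/2\in\{0,1/2\}$, Theorem \ref{thm:equiv-par-com} already gives that $X$ is parabolic if and only if (FK) holds. Furthermore, the implication ``$\sum_n e^{-\sigma_n/2}<\infty\Rightarrow\Gamma$ is of the second kind'' is precisely \cite[Theorem 9.6]{BHS}, whose contrapositive yields ``(FK) $\Rightarrow \sum_n e^{-\sigma_n/2}=\infty$''. Thus the only genuinely new content is the single implication
$$
\sum_{n=1}^\infty e^{-\sigma_n/2}=\infty \ \Longrightarrow\ \text{(FK)},
$$
and I would devote the bulk of the argument to it, combining it with the two cited equivalences to close the loop among parabolicity, (FK), and divergence of the series.

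The first step toward this implication is to convert (FK) into a statement about the universal cover. By Corollary \ref{cor:complete-one-sided} together with Remark \ref{rem:nested-alpha-acc}, the group is of the first kind exactly when the nested sequence of lifts $\widetilde{\alpha_n}$ of the cuffs accumulates to a single point of $\partial\mathbb{H}$. So the task reduces to the following purely geometric assertion: assuming $\sum_n e^{-\sigma_n/2}=\infty$, the geodesics $\widetilde{\alpha_n}$ must shrink down to one boundary point. I would set up the configuration by joining each consecutive pair $\widetilde{\alpha_n},\widetilde{\alpha_{n+1}}$ by their common orthogeodesic $\eta_n$, noting that the half-twist forces the two feet on each $\widetilde{\alpha_n}$ (from the left and the right) to be a distance $\tfrac{\ell_n}{2}$ apart; the alternating left/right displacement is exactly what produces the alternating partial sums $\sigma_n=\ell_n-\ell_{n-1}+\cdots+(-1)^{n-1}\ell_1$. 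Starting from a point on $\widetilde{\alpha_1}$, concatenating the summits of the Saccheri quadrilaterals with bases $\eta_n$ gives a path whose $n$-th summit has length asymptotic to $e^{-\sigma_{n-1}/2}+e^{-\sigma_{n+1}/2}$, so the hypothesis makes this concatenation infinitely long; this is the analogue of the unit-offset estimate in the Surface Subgroup argument of Kahn--Markovic \cite{KahnMarkovic}.

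Because deducing single-point accumulation directly from the divergence of this summit length seems prohibitive, the refinement I would carry out is to insert one auxiliary geodesic between each consecutive pair $\widetilde{\alpha_n},\widetilde{\alpha_{n+1}}$, producing a sequence of wedges that form part of an ideal triangulation of $\mathbb{H}$. Along this nested sequence I would compute the shears using \cite[Theorem C]{Saric2011}, and then estimate the length of a piecewise-horocyclic transversal running across the wedges. The point is that divergence of the shear-based series (which I would match to $\sum_n e^{-\sigma_n/2}=\infty$) forces this horocyclic path to have infinite length, and infinite horocyclic length is equivalent to the endpoints of the $\widetilde{\alpha_n}$ collapsing to a single boundary point rather than to an arc. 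This yields (FK) and completes the chain of equivalences.

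The main obstacle I anticipate is precisely the control of the diverging offset $\tfrac{\ell_n}{2}\to\infty$: the half-twists shear consecutive lifts apart by an amount tending to infinity, yet we must prove the lifts nonetheless converge to a single point. The delicate part is the bookkeeping of the shears and the verification that the length of the piecewise-horocyclic path is comparable to $\sum_n e^{-\sigma_n/2}$, so that its divergence is genuinely detected by the boundary behaviour. I would isolate the necessary geometric estimates for the wedges, the shear formula, and the horocyclic path in an appendix, so that the main argument can proceed cleanly from the reduction to (FK), through the accumulation criterion, to the shear computation.
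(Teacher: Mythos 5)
Your proposal follows essentially the same route as the paper: reduce parabolicity to the covering group being of the first kind via Theorem \ref{thm:equiv-par-com}, dispose of the converse direction by citing [BHS, Theorem 9.6], and prove the main implication by passing to the lift of the front side, inserting one auxiliary geodesic per consecutive pair of cuff lifts to form wedges, computing the shears, and bounding the length of the piecewise-horocyclic path from below by a constant times $\sum_n e^{-\sigma_n/2}$ --- exactly the content of Lemmas \ref{lem:shear-even} and \ref{lem:shear-odd} together with Propositions A.1 and A.3. The only caveat is that your write-up defers the explicit shear formulas and the telescoping cancellation estimates to an appendix rather than carrying them out, but the strategy coincides with the paper's.
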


Before proving the above theorem, we will establish several lemmas. 
Consider two ideal geodesic triangles $\Delta_1$ and $\Delta_2$ with disjoint interiors and a common boundary side $g$. We orient $g$ such that $\Delta_1$ is to its left and consider the two orthogeodesics to $g$ from the vertices of $\Delta_1$ and $\Delta_2$ not on $g$. 
The {\it shear} $s(g)$ of the configuration $(\Delta_1,\Delta_2)$ is the signed distance (with respect to the orientation of $g$) between the foot on $g$ of the orthogeodesic in $\Delta_1$ to the foot of the orthogeodesic in $\Delta_2$. The shear of the configuration $(\Delta_2,\Delta_1)$ equals the shear of the configuration $(\Delta_1,\Delta_2)$.

Recall that the front side $X^*$ of the half-twist {flute} surface $X$ is simply connected and its single lift to the universal covering $\mathbb{D}$ is an infinite polygon (see Figure 10). We denote by $\tilde{X}^*$ a single lift of $X^*$ to $\mathbb{D}$.
By Corollary \ref{cor:complete-one-sided}, the covering group of $X$ is of the first kind if and only if in addition to the countable sets of vertices, the infinite polygon $\tilde{X}^*$ has only one more point of accumulation on $S^1$.  
 
Therefore we need to establish that $\tilde{X}^*$ has only one extra accumulation point on $S^1$ in addition to its ideal vertices. The fronts of the closed geodesic boundaries (cuffs) $\{\alpha_n\}_n$ of the pants decomposition of $X$ lift to orthogeodesic arcs between the sides of the infinite polygon $\tilde{X}^*$. Denote by $g_{2n-1}$ the geodesic in $\mathbb{D}$ which contains the lift of $\alpha_n$ as in Figure 10. We orient $g_{2n-1}$ such that $g_{2n+1}$ is to its right. Let $g_{2n}$ be the geodesic in $\mathbb{D}$ whose endpoints are the initial point of $g_{2n-1}$ and the terminal endpoint of $g_{2n+1}$ (see Figure 10). We are interested in computing the shears $s(g_n)$ for $n\geq 2$, where $g_n$ is a diagonal of the ideal quadrilateral whose vertices  are the endpoints of $g_{n-1}$ and $g_{n+1}$.
 
  \begin{figure}[h]
\leavevmode \SetLabels
\L(.36*.4) $g_1$\\
\L(.47*.6) $g_2$\\
\L(.57*.9) $g_3$\\
\L(.5*.4) $g_4$\\
\L(.61*.5) $g_5$\\
\L(.7*.6) $g_6$\\
\L(.7*.4) $g_7$\\
\endSetLabels
\begin{center}
\AffixLabels{\centerline{\epsfig{file =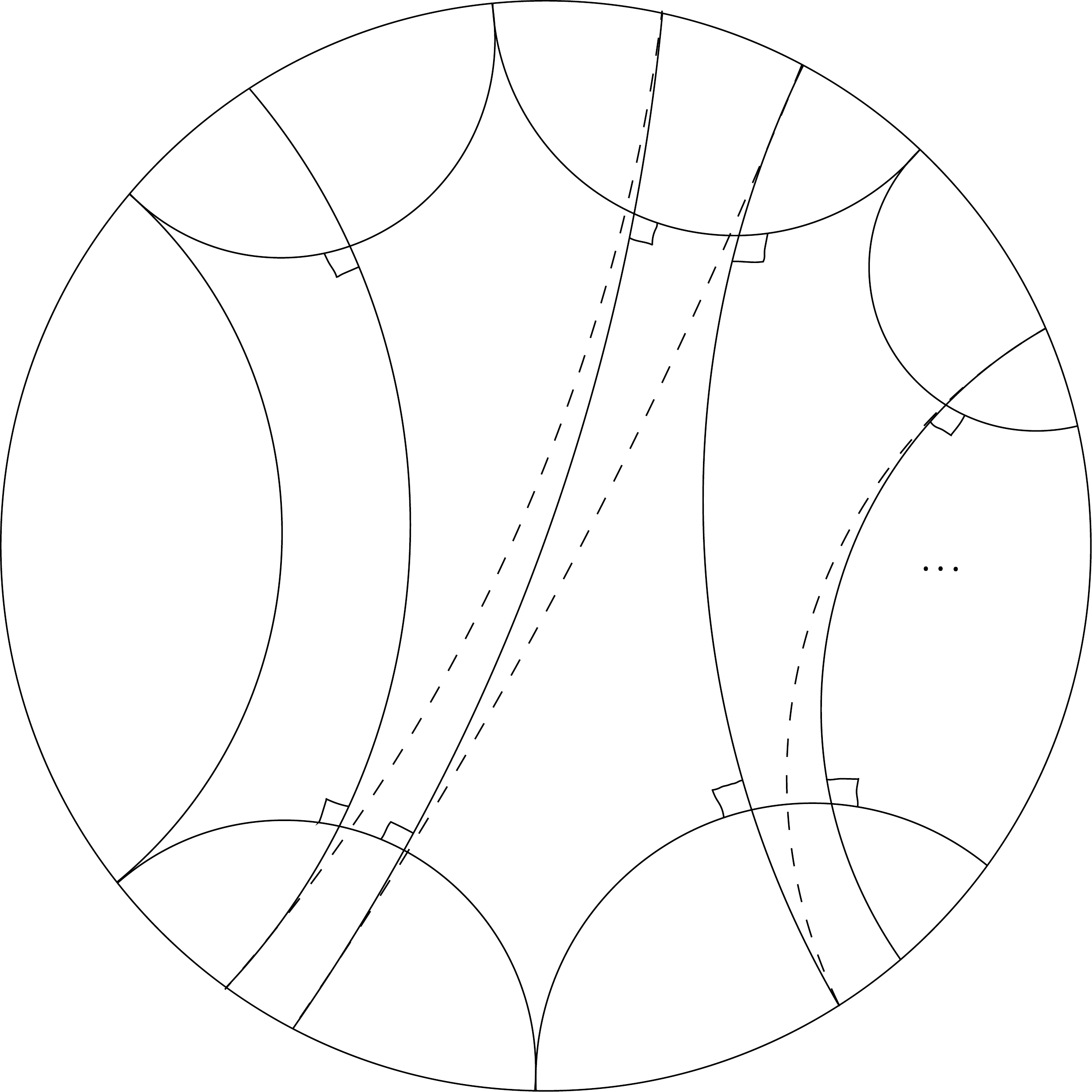,width=3.5in,height=3.5in,angle=0}}}
\vspace{-30pt}
\end{center}
\caption{The infinite polygon $\tilde{X}^*$ and its limit points on $S^1$.} 
\end{figure}

 The common orthogonal geodesic arc $\eta_n$ between $g_{2n-1}$ and $g_{2n+1}$ lies on $\tilde{X}^*$. The common orthogonal $\eta_n$ together with the parts of $g_{2n-1}$ and $g_{2n+1}$ in $\tilde{X}^*$ make three sides of a geodesic pentagon in $\tilde{X}^*$ that has four right angles and a zero angle at the vertex of $\tilde{X}^*$ between $g_{2n-1}$ and $g_{2n+1}$. The other two sides of the pentagon are geodesic rays on the boundary of $\tilde{X}^*$. The lengths of the sides of the pentagon on $g_{2n-1}$ and $g_{2n+1}$ are $\ell_n/2$ and $\ell_{n+1}/2$, respectively. By the formulas for the Saccheri rectangle, we obtain
$$
\ell (\eta_n)=\tanh^{-1}\Big{(}\frac{1}{\cosh \frac{\ell_n}{2}}\Big{)}+\tanh^{-1}\Big{(}\frac{1}{\cosh \frac{\ell_{n+1}}{2}}\Big{)},
$$
which implies, for large n (for the upper estimate) and for a universal constant $C>0$ ($C=8$ works),

\begin{equation}
\label{eq:eta_n}
e^{-\frac{\ell_{n+1}}{2}} < \ell (\eta_n) < Ce^{-\frac{\ell_n}{2}}.
\end{equation}

\begin{lemma}
\label{lem:shear-even}
Under the above notation, the shear along $g_{2n}$ for the quadrilateral whose vertices are at the endpoints of $g_{2n-1}$ and $g_{2n+1}$ is given by
$$
s(g_{2n})=2\log\sinh\frac{\ell (\eta_n)}{2}.
$$
\end{lemma}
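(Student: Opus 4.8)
The plan is to normalize the configuration by an isometry and then reduce the shear to a single cross-ratio. First I would apply an isometry of $\D$ (equivalently, pass to the upper half-plane $\H$) carrying the common orthogeodesic $\eta_n$ of $g_{2n-1}$ and $g_{2n+1}$ onto the imaginary axis, with its midpoint at $i$. Since $g_{2n-1}$ and $g_{2n+1}$ meet $\eta_n$ orthogonally, they become Euclidean semicircles centred at the origin; and because the hyperbolic length of the segment of the imaginary axis between their highest points is $\ell(\eta_n)$, writing $d=\ell(\eta_n)/2$ their radii are $e^{-d}$ and $e^{d}$. Hence the four ideal endpoints of $g_{2n-1}$ and $g_{2n+1}$ are $\pm e^{-d}$ and $\pm e^{d}$ on $\R$, these being the vertices of the ideal quadrilateral in the statement, and $g_{2n}$ is the diagonal joining the initial endpoint of $g_{2n-1}$ to the terminal endpoint of $g_{2n+1}$.

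Next I would use the description of the shear as a signed distance between feet of perpendiculars, made explicit by a cross-ratio. For an ideal triangle one of whose sides lies on a geodesic $h$ and whose opposite ideal vertex is $p$, the foot on $h$ of the orthogeodesic from $p$ is, after sending $h$ to the imaginary axis by a Möbius map, the point whose Euclidean height equals $|p'|$, where $p'$ is the image of $p$. I would therefore apply the Möbius transformation $w=(z-A)/(z-C)$ sending the two endpoints $A,C$ of the diagonal $g_{2n}$ to $0$ and $\infty$; under it the remaining two vertices $B,D$ (the apexes of the two ideal triangles cut off by $g_{2n}$) map to real points on opposite sides of the imaginary axis. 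A direct evaluation gives that their images have absolute values $e^{d}\sinh d$ and $e^{d}/\sinh d$, so the ratio of the heights of the two feet is $\sinh^{2}d$. The signed distance between the feet along $g_{2n}$ is the logarithm of this ratio, giving $\ |s(g_{2n})|=2\log\sinh d=2\log\sinh\tfrac{\ell(\eta_n)}{2}$, up to sign.

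The only genuinely delicate point is the sign. The magnitude $2\bigl|\log\sinh\tfrac{\ell(\eta_n)}{2}\bigr|$ drops out of the cross-ratio computation regardless of conventions, and since $\ell(\eta_n)$ is small the number $\log\sinh\tfrac{\ell(\eta_n)}{2}$ is negative. To match the asserted value I would check, using the orientation of $g_{2n-1}$ (with $g_{2n+1}$ to its right) together with the orientation of $g_{2n}$ fixed in Figure 10, that the triangle declared to lie on the left of $g_{2n}$ is the one whose apex has its foot at the greater height; then the signed distance from the left foot to the right foot equals $\log(\sinh^{2}d)=2\log\sinh\tfrac{\ell(\eta_n)}{2}$, which is negative and therefore exactly the claimed value. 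I expect this orientation and sign bookkeeping, rather than the cross-ratio computation, to be the one step that must be carried out with care; once it is settled the identity $s(g_{2n})=2\log\sinh\tfrac{\ell(\eta_n)}{2}$ follows.
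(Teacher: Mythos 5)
Your proposal is correct and follows essentially the same route as the paper: both first map $\eta_n$ to the imaginary axis so that $g_{2n-1},g_{2n+1}$ become concentric semicircles with radii $e^{\mp\ell(\eta_n)/2}$ (up to scale), then apply a second M\"obius map normalizing the diagonal $g_{2n}$ and read the shear off a cross-ratio, obtaining $\log\sinh^2\frac{\ell(\eta_n)}{2}$. The paper sends three vertices to $0,-1,\infty$ and takes the logarithm of the image of the fourth, whereas you compute the two feet of perpendiculars directly; these are the same computation, and your attention to the sign (the shear is indeed negative for small $\ell(\eta_n)$) matches the remark following the lemma.
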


\begin{remark}
Note that $s(g_{2n})<0$ for $n$ large enough because $\ell (\eta_n)\to 0$ as $n\to\infty$. 
\end{remark}

\begin{proof}
The distance between $g_{2n-1}$ and $g_{2n+1}$ is $\ell(\eta_n )$. Let $A:\mathbb{D}\to\mathbb{H}$ be a M\"obius map such that $\eta_n$ is mapped onto the $y$-axis.  The geodesics $A(g_{2n-1})$ and $A(g_{2n+1})$ have endpoints $(-x_n,x_n)$ and $(-y_n,y_n)$, respectively. Without loss of generality, we can assume that $x_n<y_n$ and we have
$$
\ell (\eta_n)=\log\frac{y_n}{x_n}.
$$

Let $B:\mathbb{H}\to\mathbb{H}$ be a M\"obius map such that $B(x_n)=0$, $B(-x_n)=-1$ and $B(-y_n)=\infty$. Then $B(z)=\frac{y_n-x_n}{2x_n}\frac{z-x_n}{z+y_n}$,
$B(y_n)>0$ and
$$
s(g_{2n})=\log (B(y_n))=\log \Big{[}\frac{y_n-x_n}{2\sqrt{x_n}\sqrt{y_n}}\Big{]}^2=\log\sinh^2\frac{\ell (\eta_n)}{2}.
$$
\end{proof}

The remaining cases are finding shears on $g_n$ with $n$ odd. The expression is more complicated as it depends on the lengths of two adjacent $\eta_i$ and on the length of the cuff that lifts to $g_n$. The formula will slightly differ for the indices that have remainders $1$ and $3$ under division by $4$. 

\begin{lemma}
\label{lem:shear-odd}
Consider the lift of $X^*$ and geodesics $g_n$ as above. The shear of $g_n$ is defined with respect to the quadrilateral whose vertices are the  ideal endpoints of $g_{n-1}$ and $g_{n+1}$. When $\ell_n$ is large enough, we have
$$
s(g_{4n+1})=\sinh^{-1}\frac{1}{\sinh\ell (\eta_{2n}) }+\sinh^{-1}\frac{1}{\sinh\ell (\eta_{2n+1}) }-\frac{\ell_{2n+1}}{2}
$$
and
$$
s(g_{4n+3})=\sinh^{-1}\frac{1}{\sinh\ell (\eta_{2n+1}) }+\sinh^{-1}\frac{1}{\sinh\ell (\eta_{2n+2}) }+\frac{\ell_{2n+2}}{2}.
$$
\end{lemma}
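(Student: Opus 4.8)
The plan is to follow the normalization used in the proof of Lemma \ref{lem:shear-even} and to break the shear into three collinear pieces measured along $g_n$. Recall that $s(g_n)$ is the signed distance, along $g_n$, between the feet of the two orthogeodesics dropped from the third vertices of the two ideal triangles adjacent to $g_n$. First I would identify these two triangles for $g_{4n+1}$. The quadrilateral defining $s(g_{4n+1})$ has as vertices the four ideal endpoints of $g_{4n}$ and $g_{4n+2}$; since $g_{4n}$ joins the initial endpoint of $g_{4n-1}$ to the terminal endpoint of $g_{4n+1}$, while $g_{4n+2}$ joins the initial endpoint of $g_{4n+1}$ to the terminal endpoint of $g_{4n+3}$, the diagonal $g_{4n+1}$ cuts the quadrilateral into a triangle whose third vertex is an endpoint of $g_{4n-1}$ and a triangle whose third vertex is an endpoint of $g_{4n+3}$. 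The neighbouring geodesics $g_{4n-1}$ and $g_{4n+1}$ share the common perpendicular $\eta_{2n}$, while $g_{4n+1}$ and $g_{4n+3}$ share $\eta_{2n+1}$; the feet of these two perpendiculars on $g_{4n+1}$ bound the lift of the front half of the cuff $\alpha_{2n+1}$ and are therefore a distance $\ell_{2n+1}/2$ apart, this being the offset produced by the half-twist. (For $g_{4n+3}$ the relevant perpendiculars are $\eta_{2n+1}$ and $\eta_{2n+2}$ and the middle segment has length $\ell_{2n+2}/2$.)

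The heart of the argument is one auxiliary distance computation. Normalizing by a M\"obius map so that $g_{4n+1}$ is the imaginary axis in $\mathbb{H}$ and the foot of $\eta_{2n}$ is the point $i$, perpendicularity forces $\eta_{2n}$ to run along the unit semicircle, and $g_{4n-1}$ is then the semicircle meeting $\{|z|=1\}$ orthogonally at the point $e^{i\theta_0}$ with $\tan(\theta_0/2)=e^{-\ell(\eta_{2n})}$. A short computation gives that this semicircle is centered at $\sec\theta_0$ with radius $\tan\theta_0$, so its ideal endpoints are the reciprocal reals $\sec\theta_0\pm\tan\theta_0$; dropping the perpendicular from either endpoint $\xi$ to the imaginary axis puts its foot at height $|\xi|$. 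Hence the signed distance along $g_{4n+1}$ from the foot of $\eta_{2n}$ to that foot has magnitude $|\log(\sec\theta_0+\tan\theta_0)|=\sinh^{-1}(\tan\theta_0)$, and the half-angle identity together with $\tan(\theta_0/2)=e^{-\ell(\eta_{2n})}$ yields $\tan\theta_0=1/\sinh\ell(\eta_{2n})$, so the distance equals $\sinh^{-1}\frac{1}{\sinh\ell(\eta_{2n})}$. The identical formula with $\eta_{2n+1}$ controls the foot coming from the $g_{4n+3}$-vertex.

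With these pieces the shear is assembled as a sum of three consecutive signed segments along $g_{4n+1}$: from the foot of the orthogeodesic off the $g_{4n-1}$-vertex to the foot of $\eta_{2n}$, then across the half-cuff from the foot of $\eta_{2n}$ to the foot of $\eta_{2n+1}$, and finally from the foot of $\eta_{2n+1}$ to the foot of the orthogeodesic off the $g_{4n+3}$-vertex. The two outer segments contribute $\sinh^{-1}\frac{1}{\sinh\ell(\eta_{2n})}$ and $\sinh^{-1}\frac{1}{\sinh\ell(\eta_{2n+1})}$, and the middle one contributes $\pm\ell_{2n+1}/2$. I expect the main obstacle to be precisely this sign bookkeeping: one must track, in the fixed orientation of $g_{4n+1}$, on which side of each $\eta$-foot the endpoint $\xi$ (that is, $\sec\theta_0+\tan\theta_0$ versus its reciprocal) lies, and verify that the half-cuff is traversed against the orientation for indices $\equiv 1\pmod 4$ and with it for indices $\equiv 3\pmod 4$. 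This alternation is exactly the effect of the half-twists, which interchange the roles of initial and terminal endpoints from one cuff to the next, and it is also why the hypothesis that $\ell_n$ be large enough is used: by inequality (\ref{eq:eta_n}) it forces $\ell(\eta_n)\to 0$, so the two outer feet fall beyond the half-cuff segment and the three signed pieces occur in the stated order. Carrying this out produces the two displayed formulas.
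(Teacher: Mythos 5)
Your proposal is correct and follows essentially the same route as the paper: decompose the signed distance between the two orthogeodesic feet on $g_{4n+1}$ (resp.\ $g_{4n+3}$) into the two outer segments of length $\sinh^{-1}\frac{1}{\sinh\ell(\eta_\cdot)}$ and the middle half-cuff segment of length $\frac{\ell_\cdot}{2}$, with the sign of the middle term alternating with the residue of the index mod $4$ because of the half-twists. The only cosmetic difference is that you derive the outer lengths by an explicit upper half-plane computation where the paper reads them off the Lambert quadrilaterals $ABQP$ and $RSDC$, and both you and the paper settle the sign bookkeeping by inspecting the configuration (Figures 10--12) rather than by a separate argument.
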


\begin{proof}
Consider $g_{4n+1}$ and the corresponding quadrilateral as in Figure 11. The geodesic $g_{4n+1}$ is the diagonal and denote by $A$ and $D$ the other two vertices of the quadrilateral on the left and right side of $g_{4n+1}$, correspondingly. Then  $A$ is the initial endpoint of $g_{4n-1}$ and $D$ is the terminal endpoint of $g_{4n+3}$. Let $P\in g_{4n+1}$ be the foot of the orthogeodesic from $A$ to $g_{4n+1}$, and let $S$ be the foot of the orthogeodesic from $D$. Let $B\in g_{4n-1}$ and $Q\in g_{4n+1}$ be the endpoints of the orthogeodesic $\eta_{2n}$ between $g_{4n-1}$ and $g_{4n+1}$, and let $R\in g_{4n+1}$ and $C\in g_{4n+3}$ be the endpoints of the orthogeodesic $\eta_{2n+1}$ between $g_{4n+1}$ and $g_{4n+3}$ (see Figure 11).

 \begin{figure}[h]
\leavevmode \SetLabels
\L(.29*.71) $B$\\
\L(.51*.71) $Q$\\
\L(.465*.62) $S$\\
\L(.51*.43) $P$\\
\L(.47*.34) $R$\\
\L(.61*.31) $C$\\
\L(.18*.2) $A$\\
\L(.82*.72) $D$\\
\L(.51*.1) $g_{4n+1}$\\
\endSetLabels
\begin{center}
\AffixLabels{\centerline{\epsfig{file =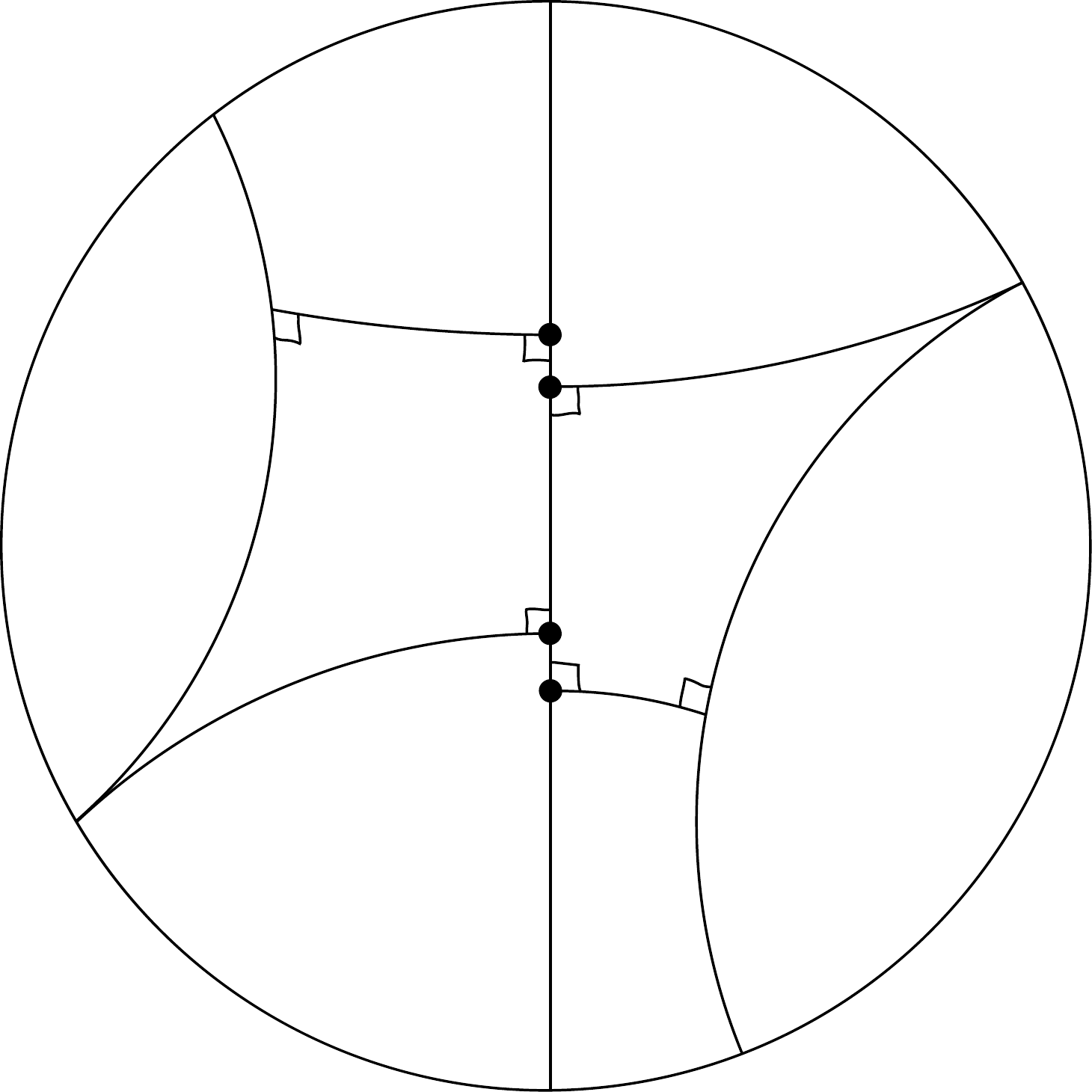,width=3.5in,height=3.5in,angle=0}}}
\vspace{-30pt}
\end{center}
\caption{$s(g_{4n+1})=\ell (PQ)+\ell (RS)-\frac{\ell_{2n+1}}{2}$.} 
\end{figure}

The starting choice of the half-twists and the fact that the index of $g_i$ has remainder $1$ under division by $4$ guarantees that the arc $RQ$ contains arc $PS$ (see Figures 10 and 11). From Figure 11 we obtain 
$$s(g_{4n+1})=\ell (PQ)+\ell (RS)-\frac{\ell_{2n+1}}{2},$$
where  $\ell (PQ)$ and $\ell (RS)$ are the lengths of arcs $PQ$ and $RS$, respectively. From the Lambert quadrilaterals $ABQP$ and $RSDC$, we obtain $\ell (PQ)=\sinh^{-1}\frac{1}{\sinh \ell (\eta_{2n})}$ and $\ell (RS)=\sinh^{-1}\frac{1}{\sinh \ell (\eta_{2n+1})}$ which gives the formula for $s(g_{4n+1})$.

Consider $g_{4n+3}$ and the corresponding quadrilateral as in Figure 12. The geodesic $g_{4n+3}$ is the diagonal and denote by $A$ and $D$ the other two vertices of the quadrilateral on the left and  right side of $g_{4n+3}$, correspondingly. Then  $A$ is the initial endpoint of $g_{4n+1}$ and $D$ is the terminal endpoint of $g_{4(n+1)+1}$. Let $P\in g_{4n+3}$ be the foot of the orthogeodesic from $A$ to $g_{4n+3}$, and let $S$ be the foot of the orthogeodesic from $D$. Let $B\in g_{4n+1}$ and $Q\in g_{4n+3}$ be the endpoints of the orthogeodesic $\eta_{2n+1}$ between $g_{4n+1}$ and $g_{4n+3}$, and let $R\in g_{4n+3}$ and $C\in g_{4(n+1)+1}$ be the endpoints of the orthogeodesic $\eta_{2n+2}$ between $g_{4n+3}$ and $g_{4(n+1)+1}$ (see Figure 12).

 \begin{figure}[h]
\leavevmode \SetLabels
\L(.47*.79) $S$\\
\L(.47*.61) $R$\\
\L(.61*.61) $C$\\
\L(.7*.94) $D$\\
\L(.51*.34) $Q$\\
\L(.475*.15) $P$\\
\L(.36*.3) $B$\\
\L(.32*.01) $A$\\
\L(.3*.5) $g_{4n+1}$\\
\L(.51*.1) $g_{4n+3}$\\
\L(.6*.41) $g_{4(n+1)+1}$\\
\endSetLabels
\begin{center}
\AffixLabels{\centerline{\epsfig{file =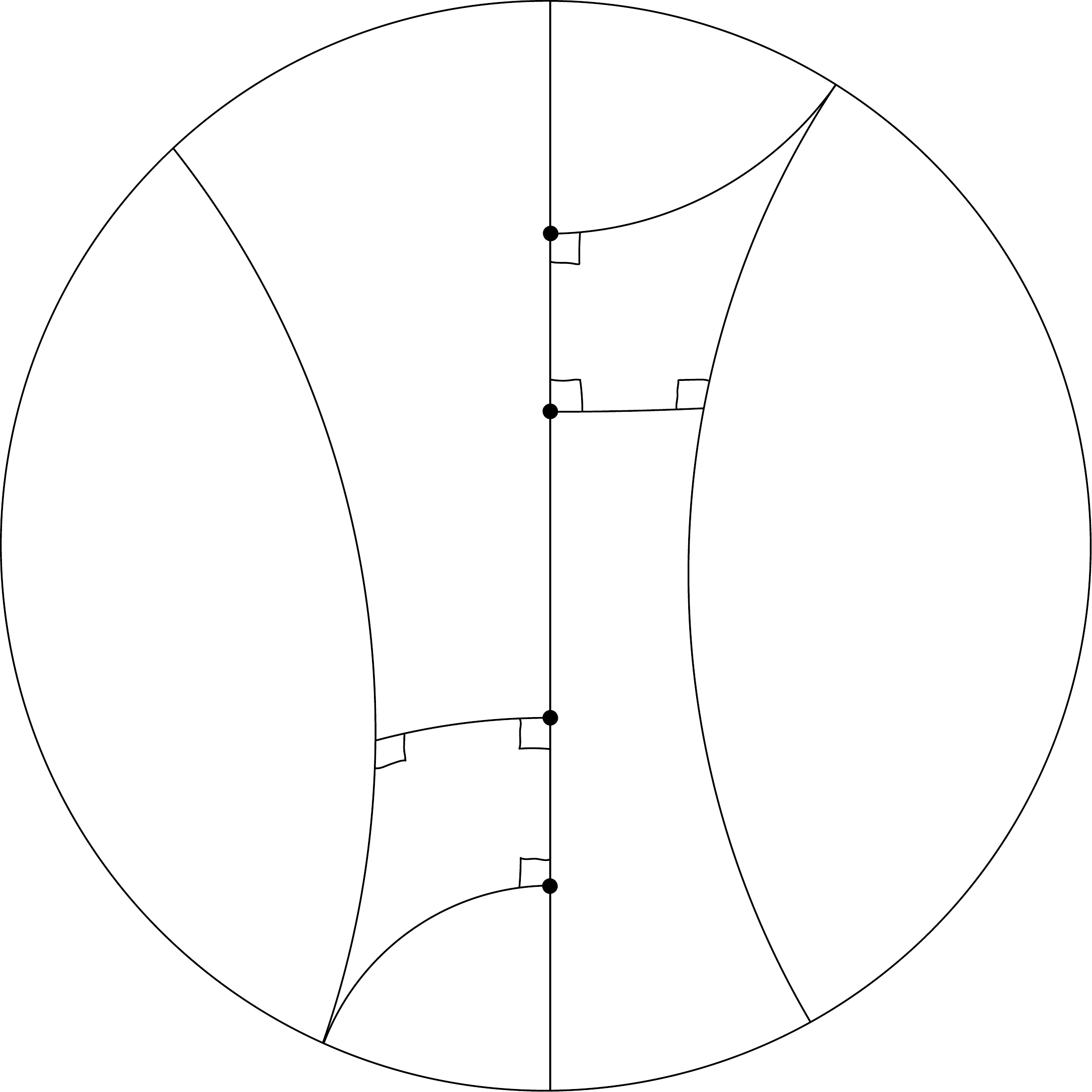,width=3.5in,height=3.5in,angle=0}}}
\vspace{-30pt}
\end{center}
\caption{$s(g_{4n+3})=\ell (PQ)+\frac{\ell_{2n+2}}{2}+ \ell (RS).$} 
\end{figure}

The starting choice of the half-twists and the fact that the index of $g_i$ has remainder $3$ under division by $4$ guarantees that the arc $QR$ is contained in arc $PS$ (see Figures 10 and 12). From Figure 12 we obtain 
$$s(g_{{4n+3}})=\ell (PQ)+\ell (RS)+\frac{1}{2}\ell_{2n+2},$$
where  $\ell (PQ)$ and $\ell (RS)$ are the lengths of arcs $PQ$ and $RS$, respectively. From the Lambert quadrilaterals $ABQP$ and $RSDC$, we obtain $\ell (PQ)=\sinh^{-1}\frac{1}{\sinh \ell (\eta_{2n+1})}$ and $\ell (RS)=\sinh^{-1}\frac{1}{\sinh \ell (\eta_{2n+2})}$ which gives the formula for $s(g_{4n+3})$.
\end{proof}

\vskip 1 cm

\noindent {\it Proof of Theorem 5.1.}  By Theorem \ref{thm:equiv-par-com}, it is enough to prove that $X=\{ (\ell_n,1/2)\}_n$ is complete. Since $X$ is symmetric with symmetry that is exchanging the front and the back side, it is enough to prove that the infinite polygon lift of $X^*$ to $\mathbb{D}$ has only one accumulation point on $S^1$, in addition to its vertices (see Corollary \ref{cor:complete-one-sided}). Therefore, it will be enough to prove that the sequence of nested geodesics $\{ g_n\}_{n=1}^{\infty}$ (from Figure 10) does not accumulate in $\mathbb{D}$.

It is immediate that $\sum_{n=1}^{\infty} \ell (\eta_n)=\infty$ implies that $X^*$ has only one point of accumulation on $S^1$ {in} addition to its vertices. Therefore we assume that $\sum_{n=1}^{\infty} \ell (\eta_n)<\infty$ in the rest of the proof. This implies that 
\begin{equation}
\label{eq:prod-finite}
1\leq \prod_{n=1}^{\infty}(1+\ell (\eta_n))<\infty.
\end{equation}

By Proposition A.1 in Appendix (or by the proof of \cite[Theorem C]{Saric2011}), the sequence $\{ g_n\}_{n=1}^{\infty}$ does not accumulate in $\mathbb{D}$ if and only if the piecewise horocyclic arc connecting the adjacent geodesics has infinite length. Denote by $s_n=s(g_n)$ the shear of $g_n$ with respect to the ideal quadrilateral whose  vertices are the ideal endpoints of $g_{n-1}$ and $g_{n+1}$ for $n\geq 2$. We do not define the shear of $g_1$. We start the piecewise horocyclic path on $g_1$ such that the part in the wedge between $g_1$ and $g_2$ has length $e^{-s_1}$. By Proposition A.3, the length of the part of the piecewise horocyclic path between $g_n$ and $g_{n+1}$ is
$$
e^{-s_1-s_2-\cdots -s_n}
$$
when $n$ is odd, and it equals
$$
e^{s_1+s_2+\cdots +s_n}
$$
when $n$ is even.

We will use the inequalities 
$$
e^{\sinh^{-1}\frac{1}{\sinh x}}>\frac{2}{x}
$$
and 
$$
\sinh x > x
$$
for $x>0$.

By Lemmas \ref{lem:shear-odd} and \ref{lem:shear-even}  and the above inequalities, we get, for $n\geq 1$,
\begin{equation}
\label{eq:est-shears}
\begin{array}l
e^{s_{4n+1}}> \frac{4}{\ell (\eta_{2n})\ell (\eta_{2n+1})} e^{-\frac{\ell_{2n+1}}{2}},\\
\\
e^{s_{4n+3}}> \frac{4}{\ell (\eta_{2n+1})\ell (\eta_{2n+2})} e^{\frac{\ell_{2n+2}}{2}},\ \mathrm{and}\\
\\
e^{s_{2n}}> \frac{[\ell (\eta_{n})]^2}{4}.
\end{array}
\end{equation}
Note that the constants $4$ are essential for what follows since they will be cancelled out which {will} facilitate the needed inequality comparison.

Since we need a lower estimate of the length of the piecewise horocyclic path, we note that $\ell (h)$ is greater than 
\begin{equation}
\label{eq:sum-4n}
\sum_{n=1}^{\infty}e^{s_{4n}+s_{4n-1}+\ldots +s_2+s_1}.
\end{equation}
The sum (\ref{eq:sum-4n}) can be written as
$$
\sum_{n=1}^{\infty}\prod_{k=0}^{n-1} e^{s_{4(k+1)}+s_{4k+3}+s_{4k+2}+s_{4k+1}}
$$
and by the {estimates in} (\ref{eq:est-shears}) we get
\begin{equation}
\begin{split}
\prod_{k=0}^{n-1} e^{s_{4(k+1)}+s_{4k+3}+s_{4k+2}+s_{4k+1}}> \frac{[\ell (\eta_{2k+2})]^2}{4} \cdot \frac{4}{\ell(\eta_{2k+1})
\ell (\eta_{2k+2})}\cdot e^{\frac{\ell_{2k+2}}{2}}\\ \cdot\frac{[\ell (\eta_{2k+1})]^2}{4} \cdot \frac{4}{\ell(\eta_{2k})\ell (\eta_{2k+1})} \cdot e^{\frac{-\ell_{2k+1}}{2}}=\frac{\ell (\eta_{2k+2})}{\ell (\eta_{2k})} e^{\frac{\ell_{{2k+2}}-\ell_{2k+1}}{2}}.
\end{split}
\end{equation}

This implies 
\begin{equation}
\label{eq:sum-prod-est}
\sum_{n=1}^{\infty}\prod_{k=0}^{n-1} e^{s_{4(k+1)}+s_{4k+3}+s_{4k+2}+s_{4k+1}}> 
C\sum_{n=1}^{\infty}\prod_{k=1}^{n-1} \frac{\ell (\eta_{2k+2})}{\ell (\eta_{2k})}e^{-\frac{\ell_{2k+1}}{2}+\frac{\ell_{2k+2}}{2}}.
\end{equation}

By cancellations we get
$$
\prod_{k=1}^{n-1} \frac{\ell (\eta_{2k+2})}{\ell (\eta_{2k})}e^{-\frac{\ell_{2k+1}}{2}+\frac{\ell_{2k+2}}{2}}= 
\frac{\ell (\eta_{2n})}{\ell (\eta_2)} e^{(\ell_{2n}-\ell_{2n-1}+\cdots {+\ell_4-\ell_3})/2}{.}
$$

By (\ref{eq:eta_n}), we have that $\ell (\eta_{2n})> e^{-\frac{\ell_{2n+1}}{2}}$, which together with (\ref{eq:sum-prod-est}) and the above equality gives for a modified constant $C$ that
\begin{equation}
\label{eq:sum-4n-est}
\sum_{n=1}^{\infty}e^{s_{4n}+s_{4n-1}+\ldots +s_2+s_1}> C\sum_{n=1}^{\infty} e^{-\frac{\sigma_{2n+1}}{2}}.
\end{equation}
   
By Lemmas \ref{lem:shear-odd} and \ref{lem:shear-even} we obtain
\begin{equation}
\label{eq:sum-4n+1-est}
\begin{split}
e^{-s_{4n+1}-s_{4n}-\ldots -s_2}= \big{[}e^{-\sinh^{-1}\frac{1}{\sinh \ell (\eta_{2n})}}\cdot e^{-\sinh^{-1}\frac{1}{\sinh \ell (\eta_{2n+1})}}\cdot e^{\frac{\ell_{2n+1}}{2}}\big{]}\cdot \\\big{[}\frac{1}{\sinh^2\frac{\ell (\eta_{2n})}{2}} \big{]}\cdot
 \big{[}e^{-\sinh^{-1}\frac{1}{\sinh \ell (\eta_{2n-1})}}\cdot e^{-\sinh^{-1}\frac{1}{\sinh \ell (\eta_{2n})}}\cdot e^{{-}\frac{\ell_{2n}}{2}}\big{]} \cdot\\ \big{[}\frac{1}{\sinh^2\frac{\ell (\eta_{2n-1})}{2}} \big{]}\cdot\ldots \cdot \big{[}\frac{1}{\sinh^2\frac{\ell (\eta_{1})}{2}} \big{]}
\end{split}
\end{equation}

By using the inequalities $e^{-\sinh^{-1}\frac{1}{\sinh x}}> \frac{x}{5}$ and $\frac{e^{-\sinh^{-1}\frac{1}{\sinh x}}}{\sinh \frac{x}{2}}>\frac{1}{1+x}$ for small $x>0$, we conclude that the right hand side of (\ref{eq:sum-4n+1-est}) is greater than
$$
\big{[}\prod_{i=1}^{2n}\frac{1}{1+\ell (\eta_i)}\big{]}^2 e^{-\sinh^{-1}\frac{1}{\sinh \ell (\eta_{2n+1})}} e^{\frac{\ell_{2n+1}-\ell_{2n}+\cdot +\ell_3-\ell_2}{2}}.
$$
By the above inequalities and by (\ref{eq:prod-finite}) we have {for another constant $C>0$} that
\begin{equation}
\label{eq:sum-4n+1-est-final}
\sum_{n=1}^{\infty} e^{-s_{4n+1}-s_{4n}-\ldots -s_2-s_1}> C\sum_{n=1}^{\infty} e^{-\frac{\sigma_{2n+2}}{2}}.
\end{equation}

By summing (\ref{eq:sum-4n-est}) and (\ref{eq:sum-4n+1-est-final}) we obtain {for some constant $C>0$} that the piecewise horocyclic path has length $\ell (h)$ greater than
$$
C\sum_{n=1}^{\infty} e^{-\frac{\sigma_{n}}{2}}
$$
and the assumption of the theorem implies that it is of infinite length. Thus $\tilde{X}^{*}$ accumulates to exactly one point in addition to its vertices. This implies that $X$ {is parabolic}. {\it End of the proof of Theorem 5.1.}

\subsection{A slice of the space of flutes}
Let $X_{a,b}$, with $a>0$ and $b>0$, and
$$
\ell_{2n}=a\ln (n+1)+b\ln n ,\ \ \ell_{2n+1}=(a+b)\ln (n+1)
$$
be a half-twist flute surface with the above lengths of geodesics on the boundary of a pants decomposition (see \cite{BHS}). It is immediate that $\ell_n$ is an increasing sequence. By Theorem \ref{thm:half-twist-O_G} we have that $X_{a,b}$ {is parabolic} if and only if $\sum_{n=1}^{\infty}e^{-\frac{\sigma_n}{2}}=\infty$. By computations in \cite[Example 9.9]{BHS}, we get that $\sum_{n=1}^{\infty}e^{-\frac{\sigma_n}{2}}\asymp \sum_{k=1}^{\infty}k^{-\frac{\min (a,b)}{2}}$. Then we conclude that in Figure 1 the two domains $X_{a,b}=?$ {consist entirely of parabolic flutes}.

\begin{figure}[h]
\leavevmode \SetLabels
\endSetLabels
\begin{center}
\AffixLabels{\centerline{\epsfig{file =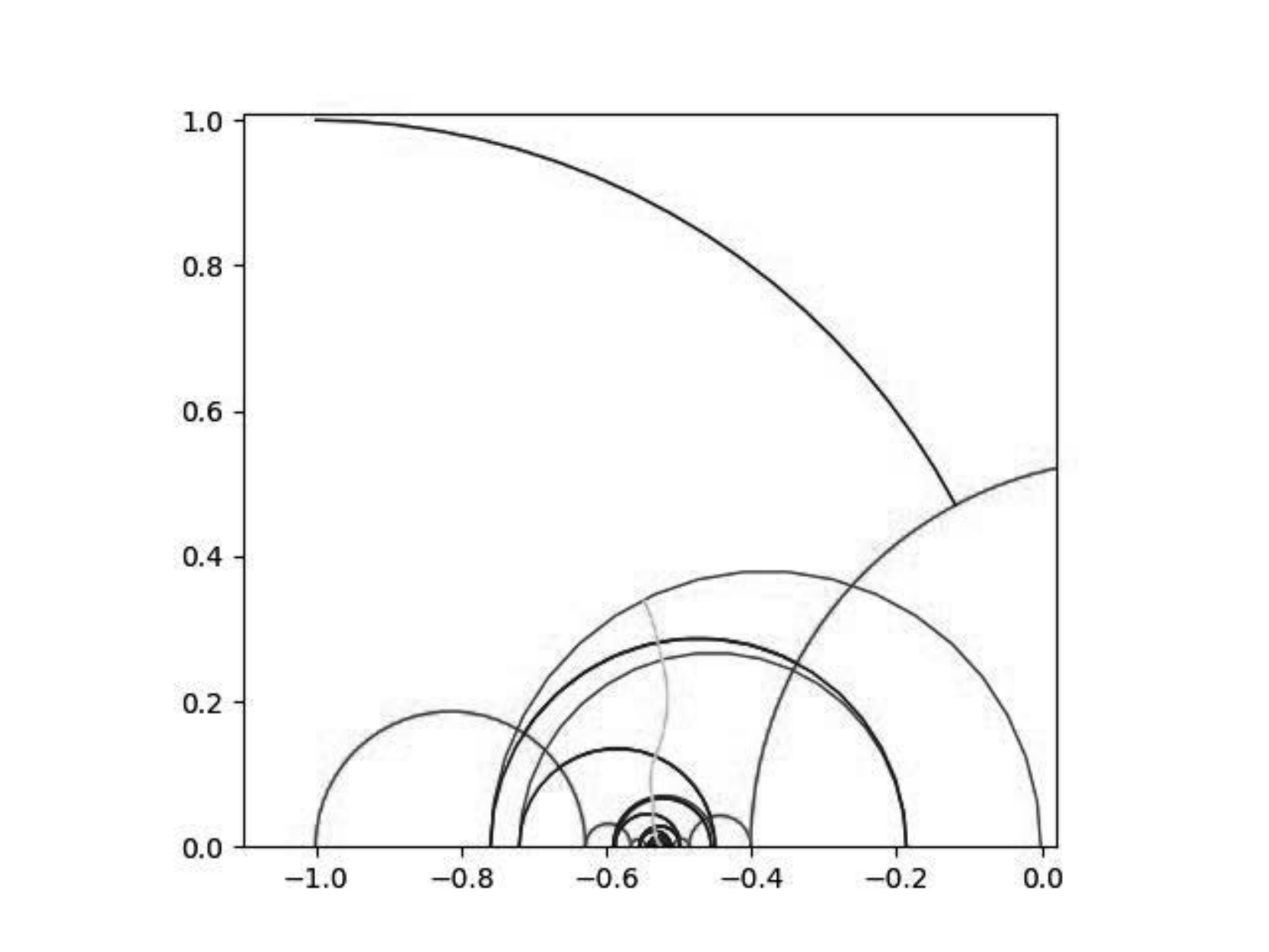,width=4in,height=2.5in,angle=0}}}
\vspace{-30pt}
\end{center}
\caption{A computer generated path of horocyclic concatenations for the flute surface $X_{a,b}$ with $a=4$ and $b=1$.} 
\end{figure}

\subsection{The symmetric finite ends surfaces} Let $X$ be a Riemann surface with finitely many ends accumulated by genus as in Section \ref{sec:finite-ends-par}.
Denote by $\{ X_i\}_{i=1}^k$ the end surfaces. For each $X_i$ assume that the twists on the geodesics $\{\alpha_n\}_{n=1}^{\infty}$ accumulating to the end is $1/2$ as in Figure 7. Denote by $\ell_n$ the lengths of simple closed geodesics $\alpha_n$ accumulating  to the single end. Let $\beta_n$ be the simple closed geodesics that cuts off a torus with one hole and let $\gamma_n$ be a simple closed geodesic in the torus as in Figure 7. 

\begin{theorem}
\label{thm:symmetric-1/2-finite-ends}
Let $X$ be a hyperbolic surface with finitely many  end surfaces $\{ X_i\}_{i=1}^k$ accumulated by genus as in Figure 7. Assume that the lengths of the simple closed geodesics $\beta_n$ and $\gamma_n$ are between two positive constants for each end surface. Let $\ell_n$ be the lengths of simple closed geodesics $\alpha_n$ accumulating at each end. Assume that $\ell_n$ is an increasing sequence and the twists on $\alpha_n$ are all equal to $1/2$.
{Then}, for each $X_i$,
$$
\sum_{n=1}^{\infty}e^{-\frac{\sigma_n}{2}}=\infty
$$
if and only if $X$ {is parabolic}, where $\sigma_n=\ell_n-\ell_{n-1}+\cdots +(-1)^{n-1}\ell_1$.
\end{theorem}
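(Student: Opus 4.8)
The plan is to run the argument of Theorem \ref{thm:half-twist-O_G} on each end surface $X_i$, treating the genus as a source of uniformly bounded corrections to the shear sums. First I would reduce the statement to an accumulation question. By Theorem \ref{thm:symmetric-finite-ends}, $X$ is parabolic if and only if its covering group $\Gamma$ is of the first kind, and by Corollary \ref{cor:complete-one-sided-finite-ends} together with Remark \ref{rem:nested-alpha-acc} this holds if and only if, for every end surface $X_i$, the nested sequence of lifts $\widetilde{\alpha_n}$ inside the infinite polygon that is a lift of $X_i^*\setminus(\cup_n\gamma_n)$ accumulates to a single point of $S^1$. Thus it suffices to show, for each fixed $i$, that this single-point accumulation occurs if and only if $\sum_{n=1}^\infty e^{-\sigma_n/2}=\infty$.

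Next I would normalize the handles. Since parabolicity is characterized by $\mathrm{mod}\,\Gamma^n\to 0$ and the modulus is a quasiconformal quasi-invariant, and since the lengths $\ell(\beta_n)$ and $\ell(\gamma_n)$ lie between two positive constants, I would apply (as in the proof of Theorem \ref{thm:symmetric-finite-ends}, via Shiga \cite{Shiga}) a bounded-distortion quasiconformal map that renders all one-holed tori cut off by the $\beta_n$ isometric, normalizing $\gamma_n$ and the twists on $\beta_n,\gamma_n$ to fixed values, while keeping each $\alpha_n$ together with its length $\ell_n$ and its half-twist unchanged. This leaves $\sigma_n$ unaltered and preserves parabolicity, so without loss of generality the attached tori form a single isometry type of bounded geometry and the surface remains symmetric with half-twists along the $\alpha_n$.

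Then I would carry out the shear computation as in Theorem \ref{thm:half-twist-O_G}. Lifting $X_i^*\setminus(\cup_n\gamma_n)$ to the infinite polygon in $\mathbb{D}$, I insert auxiliary diagonals to cut it into a sequence of wedges forming an ideal triangulation, and compute the shears using the explicit formulas of Lemmas \ref{lem:shear-even} and \ref{lem:shear-odd} (see \cite[Theorem C]{Saric2011}). The new feature relative to the flute case is that each handle inserts a fixed finite number of extra wedges between $\widetilde{\alpha_n}$ and $\widetilde{\alpha_{n+1}}$; because the tori are isometric of bounded geometry, the shears of these extra wedges are uniformly bounded in $n$. The diagonals crossing the $\widetilde{\alpha_n}$ still carry the half-twist offset $\ell_n/2$, so they reproduce the alternating $\pm\ell_n/2$ contributions that assemble into $\sigma_n$. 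Applying the piecewise horocyclic path criterion (Propositions A.1 and A.3), the bounded handle shears perturb the length of each horocyclic segment only by multiplicative factors bounded above and below independently of $n$, so the divergence or convergence of the total horocyclic path length is governed by the same series $\sum_n e^{-\sigma_n/2}$ as in the proof of Theorem \ref{thm:half-twist-O_G}. The path is infinite, equivalently $\widetilde{\alpha_n}$ accumulates to one point, precisely when this series diverges, which yields the claimed equivalence for $X_i$; combining over all $i$ through Corollary \ref{cor:complete-one-sided-finite-ends} gives the theorem.

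The main obstacle is this last bookkeeping step. In Theorem \ref{thm:half-twist-O_G} the extraction of $\sigma_n$ relied on exact cancellations among the constants and the $\ell_n$-terms across consecutive wedges, and I must verify that interleaving the bounded handle shears does not destroy these cancellations but only multiplies the resulting horocyclic lengths by uniformly controlled factors. This is exactly where the hypothesis that $\ell(\beta_n)$ and $\ell(\gamma_n)$ are pinched between two positive constants is indispensable: it yields both the bounded-distortion normalization of the tori and the uniform boundedness of the handle shears, without which the genus could contribute terms growing with $n$ and overwhelm the $\sigma_n$ asymptotics.
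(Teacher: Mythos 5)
Your overall skeleton (reduce to single-point accumulation of the nested lifts $\widetilde{\alpha_n}$ via Theorem \ref{thm:symmetric-finite-ends}, Corollary \ref{cor:complete-one-sided-finite-ends} and Remark \ref{rem:nested-alpha-acc}, normalize the handles by a quasiconformal map using Shiga, then run the shear and horocyclic-path machinery of Theorem \ref{thm:half-twist-O_G}) matches the paper's. But the step where the handles enter the computation contains a genuine gap. You propose to insert extra wedges for each handle and to control them by observing that their shears are uniformly bounded in $n$. That is not enough: by Proposition A.3 the length of the $n$-th horocyclic segment is $e^{\pm(s_1+\cdots+s_n)}$, so the extra shears enter through their \emph{partial sums}, not individually. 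If each handle contributes a fixed nonzero total shear $c$ (and after your normalization all handles are isometric, so the per-handle contribution is literally constant), the segment lengths acquire factors $e^{\pm nc}$, which would overwhelm $e^{-\sigma_n/2}$; uniform boundedness of the individual handle shears gives no cancellation, and the pinching of $\ell(\beta_n)$ and $\ell(\gamma_n)$ does not by itself force the per-handle sum to vanish. The paper avoids this issue entirely: it uses exactly the same nested family $g_1,g_2,g_3,\ldots$ as in the flute case --- one geodesic $g_{2n-1}$ containing $\widetilde{\alpha_n}$ and one added geodesic $g_{2n}$ between consecutive ones, with no extra wedges per handle --- so Lemmas \ref{lem:shear-even} and \ref{lem:shear-odd} apply verbatim. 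The only place the genus enters is the estimate on $\ell(\eta_n)$: instead of the pentagon with an ideal vertex one has a right-angled hexagon whose side opposite $\eta_n$ is a lift of half of $\beta_n$, and since $\ell(\beta_n)$ is bounded the hexagon formula still gives $e^{-\ell_{n+1}/2}<\ell(\eta_n)<Ce^{-\ell_n/2}$, after which the computation of Theorem \ref{thm:half-twist-O_G} goes through unchanged.

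A second, smaller gap: you treat the horocyclic-path criterion as if it gave both directions (``the path is infinite \ldots precisely when this series diverges''), but the shear computation only produces a \emph{lower} bound $\ell(h)>C\sum_{n}e^{-\sigma_n/2}$, hence only the implication that divergence of the series forces parabolicity. For the converse the paper argues separately: when $\sum_{n}e^{-\sigma_n/2}<\infty$ it builds an escaping path from the summits of the Saccheri quadrilaterals with bases $\eta_n$, the $n$-th summit having length at most $Ce^{-\sigma_{n-1}/2}$; this path has finite length, so the covering group is of the second kind and $X$ is not parabolic. You would need to supply this (or a matching upper bound on $\ell(h)$) to close the equivalence.
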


\begin{proof}
Assume $\sum_{n=1}^{\infty}e^{-\frac{\sigma_n}{2}}=\infty$ for any $X_i$. By using quasiconformal maps, we can assume that all the cut off tori are isometric to each other (i.e. the geodesics $\beta_n$ and $\gamma_n$ have the same length) and the twists on $\beta_n$ and $\gamma_n$ are equal to $0$. Then there exists a front to back decomposition and we denote by $X^*_i$ the front side of the end surface $X_i$. Let $\widetilde{\alpha_n}$ be the lifts of $\alpha_n$ that connect two boundary sides of a single lift of $X^*\setminus \cup_n\gamma_n$. Then {the} $\widetilde{\alpha_n}$ are nested and by Remark \ref{rem:nested-alpha-acc} it is enough to prove that {the} $\widetilde{\alpha_n}$ accumulate to a single point on $S^1$. We adopt  the computation from the proof of Theorem \ref{thm:half-twist-O_G}.

\begin{figure}[h]
\leavevmode \SetLabels
\endSetLabels
\begin{center}
\AffixLabels{\centerline{\epsfig{file =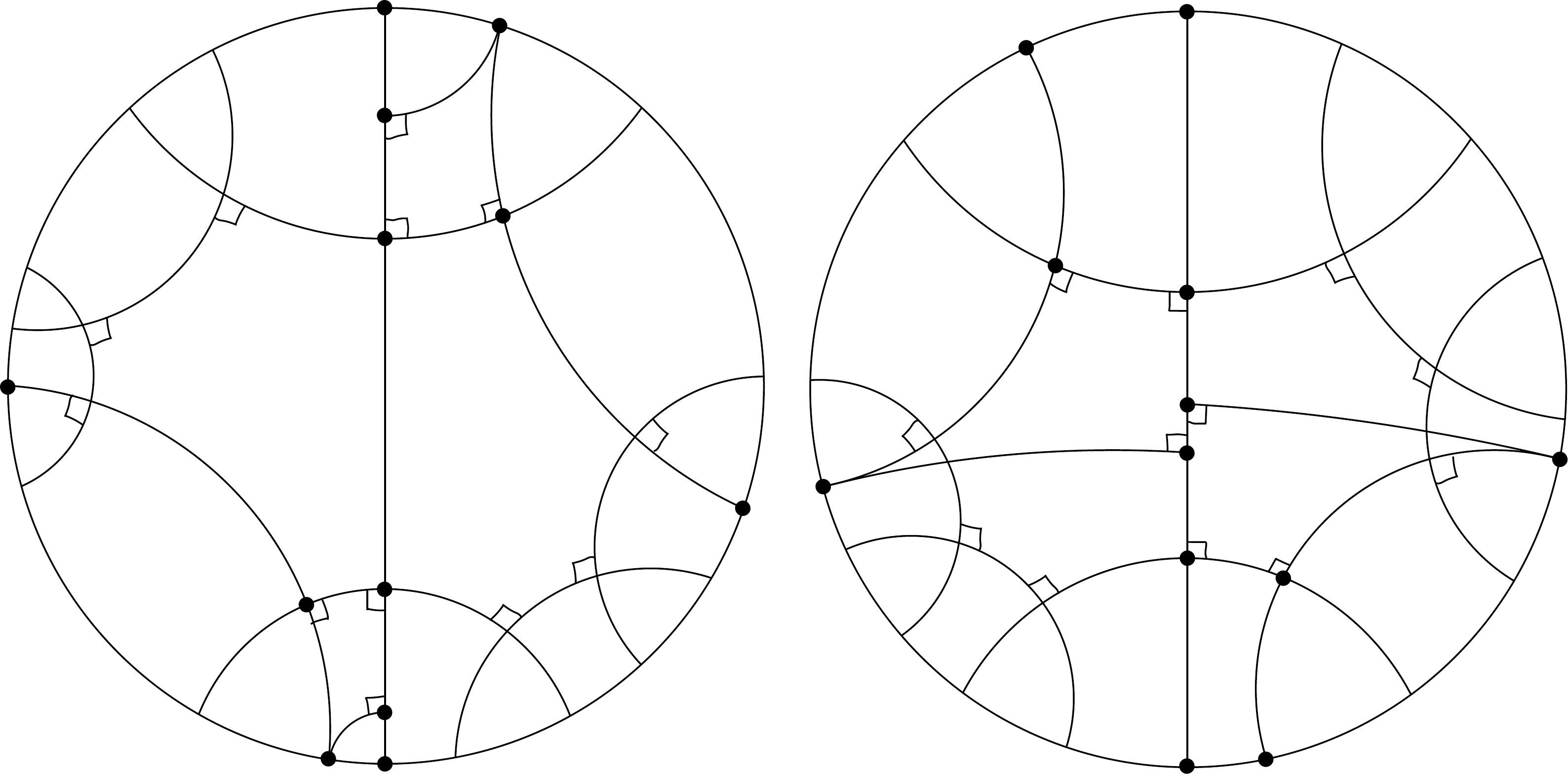,width=3.5in,height=1.8in,angle=0}}}
\vspace{-30pt}
\end{center}
\caption{Lift of half of pairs of pants of Loch-Ness monster and associated shears.} 
\end{figure}

Let $g_{2n-1}$ be the {geodesic that contains the lift} $\widetilde{\alpha_n}$ for $n=1,2,\ldots$ and let $g_{2n}$ be the added geodesic which {shares} one endpoint with $g_{2n-1}$ and {its} other endpoint with $g_{2n+1}$ as in Figure  10. Let $\eta_n$ be the orthogeodesic arc between the geodesics $g_{2n-1}$ and $g_{2n+1}$. Then $\eta_n$ is a side of a right angled hexagon that is on $X^*_i$ whose adjacent sides have lengths $\frac{\ell_n}{2}$ and $\frac{\ell_{n+1}}{2}$ and lie on $g_{2n-1}$ and $g_{2n+1}$, respectively. The side opposite $\eta_n$ is a lift of half of $\beta_n$ and therefore has length $\frac{\ell (\beta_n)}{2}$ (see Figure 14). By the hexagon formula and the assumption that $\ell_n$ is increasing and going to infinity, we get that
$$
e^{-\frac{\ell_{n+1}}{2}} < \ell (\eta_n) < C e^{-\frac{\ell_n}{2}}
$$
for some $C>0$ and all $n$.

The proof that $\sum_{n=1}^{\infty}e^{s_{4n}+s_{4n-1}+\ldots +s_2+s_1}> C\sum_{n=1}^{\infty} e^{-\frac{\sigma_{2n+1}}{2}}$ follows by the same lines as in the proof of Theorem \ref{thm:half-twist-O_G} because the geometric positions of the geodesics determining the shears on $g_n$ are identical as seen in {Figure 14}. The above estimate  $e^{-\frac{\ell_{n+1}}{2}}<\ell (\eta_n)$ finishes the proof of the inequality. 

The proof that $\sum_{n=1}^{\infty} e^{-s_{4n+1}-s_{4n}-\ldots -s_2-s_1}> C\sum_{n=1}^{\infty} e^{-\frac{\sigma_{2n+2}}{2}}$ also follows by the proof of Theorem \ref{thm:half-twist-O_G} and the geometric {positions} in Figure 14.

 Assume $\sum_{n=1}^{\infty}e^{-\frac{\sigma_n}{2}}<\infty$ for some $X_i$. Similar to what is done in [7, pgs 41-42] for half-twist flutes, we form a concatenation $p$ of the summits of Saccheri quadrilaterals with bases $\eta_n$ starting from the point on $\alpha_1$ furthest from $\eta_1$ that escapes the end $X_i$. Then the length of the n-th summit is at most $Ce^{-\frac{\sigma_{n-1}}{2}}$ for some positive constant $C$. The path $p$ is finite from our assumption and the fact that $X$ is not parabolic immediately follows.
\end{proof}

Consider the same Riemann surface $X$ with finitely many ends accumulated by genus except for each $X_i$ the twists on the geodesics $\{\alpha_n\}_{n=1}^{\infty}$ accumulating to its end are $0$ as in Figure 6.

\begin{theorem}
\label{thm:symmetric-0-finite-ends}
Let $X$ be a hyperbolic surface with finitely many  end surfaces $\{ X_i\}_{i=1}^k$ accumulated by genus as in Figure 6. Assume that the lengths of the simple closed geodesics $\beta_n$ and $\gamma_n$ are between two positive constants for each end surface. Let $\ell_n$ be the lengths of simple closed geodesics $\alpha_n$ accumulating at each end. Assume that $\ell_n$ is an increasing sequence and the twists on $\alpha_n$ are all equal to $0$.
Then, for each $X_i$,
$$
\sum_{n=1}^{\infty}e^{-\frac{l_n}{2}}=\infty
$$
if and only if $X$ is parabolic.
\end{theorem}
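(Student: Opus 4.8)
The plan is to follow the architecture of the proof of Theorem \ref{thm:symmetric-1/2-finite-ends}, replacing its delicate shear and horocyclic-path analysis by the much simpler zero-twist geometry. First I would invoke Theorem \ref{thm:symmetric-finite-ends}: since each end surface $X_i$ is symmetric and the lengths $\ell(\beta_n)$, $\ell(\gamma_n)$ are pinched between two positive constants, the surface $X$ is parabolic if and only if its covering group is of the first kind, so it suffices to characterize the first-kind condition in terms of $\sum_n e^{-\ell_n/2}$. As in the half-twist proof, I would then apply a quasiconformal map (Shiga \cite{Shiga}) to normalize, assuming without loss of generality that all tori cut off by the $\beta_n$ are isometric and that the twists along $\beta_n$ and $\gamma_n$ vanish; this is legitimate because the modulus of the relevant curve families, and hence parabolicity together with the first/second-kind dichotomy, are quasiconformal quasi-invariants. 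After this normalization each $X_i$ carries a front/back decomposition, and by Corollary \ref{cor:complete-one-sided-finite-ends} together with Remark \ref{rem:nested-alpha-acc} the group $\Gamma$ is of the first kind precisely when, in a fixed lift of $X_i^*\setminus(\cup_n\gamma_n)$ to $\mathbb{D}$, the nested geodesics $\widetilde{\alpha_n}$ accumulate to a single point of $S^1$.

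Next I would record the geometry of the common orthogeodesics $\eta_n$ between the consecutive lifts $\widetilde{\alpha_n}=g_{2n-1}$ and $\widetilde{\alpha_{n+1}}=g_{2n+1}$. Exactly as in the proof of Theorem \ref{thm:symmetric-1/2-finite-ends}, each $\eta_n$ is a side of a right-angled hexagon lying on $X_i^*$ whose adjacent sides, of lengths $\ell_n/2$ and $\ell_{n+1}/2$, lie on $g_{2n-1}$ and $g_{2n+1}$, and whose opposite side is half of $\beta_n$, of bounded length $\ell(\beta_n)/2$. The right-angled hexagon formula then yields the two-sided estimate $e^{-\ell_{n+1}/2}<\ell(\eta_n)<Ce^{-\ell_n/2}$ for some $C>0$ and all large $n$, just as displayed in that proof. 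The essential structural difference is that here the twists on the $\alpha_n$ are zero rather than $1/2$: the feet of $\eta_{n-1}$ and $\eta_n$ on $\widetilde{\alpha_n}$ now coincide, so these two orthogeodesic segments are perpendicular to $\widetilde{\alpha_n}$ at a common point and emanate from opposite sides, hence lie on a single geodesic line. Inducting on $n$, the concatenation $\bigcup_n\eta_n$ is therefore one geodesic ray $r$ issuing from $\widetilde{\alpha_1}$ and crossing every $\widetilde{\alpha_n}$ orthogonally, of total length $\sum_n\ell(\eta_n)$.

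The conclusion then follows by comparing lengths. Using the hexagon bounds together with the monotonicity of $\ell_n$, one has $\sum_n\ell(\eta_n)=\infty$ if and only if $\sum_n e^{-\ell_n/2}=\infty$, since the upper bound gives $\sum_n\ell(\eta_n)\le C\sum_n e^{-\ell_n/2}$, while the lower bound gives $\sum_n\ell(\eta_n)>\sum_n e^{-\ell_{n+1}/2}=\sum_{m\ge 2}e^{-\ell_m/2}$. If this common quantity is infinite, the ray $r$ has infinite length and so terminates at a single point $\xi\in S^1$; since each $\widetilde{\alpha_n}$ meets $r$ perpendicularly at a point escaping to $\xi$ along $r$, both endpoints of $\widetilde{\alpha_n}$ converge to $\xi$, so the $\widetilde{\alpha_n}$ accumulate to the single point $\xi$, whence $\Gamma$ is of the first kind and $X$ is parabolic by Theorem \ref{thm:symmetric-finite-ends}. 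If instead the quantity is finite, $r$ has finite length and converges to an interior point of $\mathbb{D}$; the perpendicular geodesics $\widetilde{\alpha_n}$ then converge to a complete geodesic with two distinct endpoints on $S^1$, so $\Gamma$ is of the second kind and $X$ is not parabolic. When $\ell_n$ is bounded both sums diverge trivially and $X$ is parabolic, consistent with the statement.

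The computations here are routine; the one point demanding care is the justification that, in the zero-twist normalization, the successive orthogeodesics $\eta_n$ genuinely align into a single geodesic ray despite the $\gamma_n$-handles present in $X_i^*$. This is exactly where the passage to the simply connected piece $X_i^*\setminus(\cup_n\gamma_n)$ and its quasiconformal straightening (as in the proof of Theorem \ref{thm:symmetric-finite-ends}) is needed, so that the relevant lift is an honest infinite ideal polygon in which the colinearity argument applies verbatim. Once this is in place the zero-twist case bypasses the shear machinery of Lemmas \ref{lem:shear-even} and \ref{lem:shear-odd} entirely, which is precisely what makes it strictly simpler than the half-twist Theorem \ref{thm:symmetric-1/2-finite-ends}.
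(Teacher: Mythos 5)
Your proposal is correct, and it reaches the conclusion by a genuinely different route from the paper's. The paper never passes to the universal cover for this theorem: it works directly on the surface and verifies completeness by showing that \emph{every} escaping geodesic ray has infinite length, after reflecting the ray to the front side. That forces a two-case analysis — rays that eventually avoid the attached tori have length bounded below by the path along the $\eta_n$ (which diverges since $e^{-\ell_{n+1}/2}<\ell(\eta_n)$), while rays entering infinitely many tori must cross infinitely many isometric collars around the $\beta_n$, whose uniform width (coming from the pinched lengths $\ell(\beta_n)$) forces infinite length. Your argument instead invokes Corollary \ref{cor:complete-one-sided-finite-ends} and Remark \ref{rem:nested-alpha-acc} to reduce everything to the accumulation of the nested lifts $\widetilde{\alpha_n}$, and then tracks only the single concatenated ray $\bigcup_n\eta_n$; the zero-twist normalization makes the feet of $\eta_{n-1}$ and $\eta_n$ on $\alpha_n$ coincide, so this concatenation is one geodesic orthogonal to every $\widetilde{\alpha_n}$, and its length (comparable to $\sum_n e^{-\ell_n/2}$ by the hexagon estimate) decides whether the $\widetilde{\alpha_n}$ collapse to one boundary point or converge to a geodesic in $\mathbb{D}$. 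You correctly isolate the one point needing care — that the alignment must be checked in the simply connected piece $X_i^*\setminus(\cup_n\gamma_n)$ after the quasiconformal straightening — and this is exactly where the hypotheses on $\beta_n,\gamma_n$ enter. What each approach buys: the paper's surface-level argument is self-contained and directly certifies completeness for all rays (at the cost of the extra collar case), whereas yours leans harder on the earlier corollary and needs only one well-chosen ray, which is why it avoids the case analysis entirely; the converse direction (finite $\eta_n$-path implies second kind, hence not parabolic) is essentially identical in both.
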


\begin{proof}
When $\sum_{n=1}^{\infty}e^{-\frac{l_n}{2}} =\infty$ in $X_i$, from the fact that $e^{-\frac{l_{n+1}}{2}} < l(\eta_{n})$  for all $n$, we get $\sum_{n=1}^{\infty}l(\eta_{n}) = \infty$. Our goal is to show that every escaping geodesic ray in $X_i$ is infinite length which implies $X$ is complete (and thus parabolic).

Call the front half of the end $X_i^{\star}$. Reflect an escaping geodesic ray $r$ on $X_i$ to its front. The result is a piecewise geodesic arc $r^{\star}$ in $X_i^{\star}$ with the same length as $r$.\\
\indent There are two cases to consider. The first case is that $r^{\star}$ enters finitely many (including possibly zero) attached toruses in $X_i$. In this case, $r^{\star}$ eventually enters no toruses. Notice that after this point the length of $r^{\star}$ between $\alpha_n$ and $\alpha_{n+1}$ is at least the length of $\eta_n$. The length of $r^{\star}$ then must be at least the length of a path along the $\eta_n$ starting from some sufficiently large index. Since $\sum_{n=k}^{\infty}l(\eta_{n}) =\infty$, the length of $r^{\star}$ is infinite.

The second case is that $r^{\star}$ enters infinitely many attached toruses. Since the lengths of $\beta_n$ and $\gamma_n$ are between two positive constants, we can assume without loss of generality that all of the attached toruses are isomorphic. That means the collar widths around each of the $\beta_n$ are the same. Since $r^{\star}$ passes through the full width of infinitely many identical collars, the length of $r^{\star}$ is infinite.

Assume $\sum_{n=1}^{\infty}e^{-\frac{l_n}{2}} < \infty$ and remember that $l(\eta_n) < Ce^{-\frac{l_n}{2}}$ for all $n$ for some positive constant $C$. Then the path along the $\eta_n$ on the surface is finite. Thus, the covering group is of the second kind and $X$ is not parabolic.
\end{proof}

\section*{Appendix}

Let $\{g_n\}_{n=1}^{\infty}$ be a sequence of nested geodesics in $\mathbb{D}$ such that any two adjacent geodesics $g_n$ and $g_{n+1}$ share an ideal endpoint, and no three geodesics share an ideal endpoint. The space between $g_n$ and $g_{n+1}$ is called a {\it wedge} and the common endpoint of $g_n$ and $g_{n+1}$ is called the {\it vertex} of the wedge. Each wedge is foliated by horocyclic arcs orthogonal to the sides that lie on horocycles whose center is the vertex of the corresponding wedge.

Fix a point $P_1$ on $g_1$. There exists a unique piecewise horocylic path $h$ starting at $P_1$ that consists of horocyclic arcs connecting the sides of the wedges of $\{ g_n\}_{n=1}^{\infty}$. In this section we will establish that the set of nested geodesics accumulates to a single point on the unit circle if and only if the path $h$ has infinite length.
We also prove the formulas for the lengths of the horocyclic arcs in the wedges of the geodesics $\{ g_n\}_{n=1}^{\infty}$  in terms of the shears on $\{ g_n\}$. Both results use the ideas from \cite[Theorem C]{Saric2011} and we give a proof for the convenience of the reader.

\vskip .2 cm

\noindent {\bf Proposition A.1.}
{\it Let $\{g_n\}_{n=1}^{\infty}$ be a nested sequence of geodesics and $h$ a piecewise horocyclic path as above. Then the nested sequence $\{ g_n\}_n$ accumulates to a single point on $S^1$ if and only if $h$ has infinite length.}

\vskip .2 cm

\begin{proof}
Assume that $h$ has a finite length. If $g_n$ {accumulates} to a single point on $S^1$ then the path $h$ has a sequence of points that converges toward that point on $S^1$. Since the distance from $P_1$ to any point on the boundary is infinite, it follows that $h$ has an infinite length which is a contradiction. Thus, $g_n$ {does} not accumulate to a single point on $S^1$. 

Conversely, assume that $g_n$  {accumulates} to a geodesic $g^*$ in $\mathbb{D}$. We need to prove that $h$ has a finite length. Let $a$ be the orthogeodesic arc between $g_1$ and $g^*$. The length of $a$ is finite and we will compare the length of $h$ to the length of $a$ (see Figure 15).

 \begin{figure}[h]
\leavevmode \SetLabels
\L(.3*.7) $P_1$\\
\L(.4*.755) $h$\\
\L(.45*.48) $a$\\
\L(.7*.5) $g^*$\\
\endSetLabels
\begin{center}
\AffixLabels{\centerline{\epsfig{file =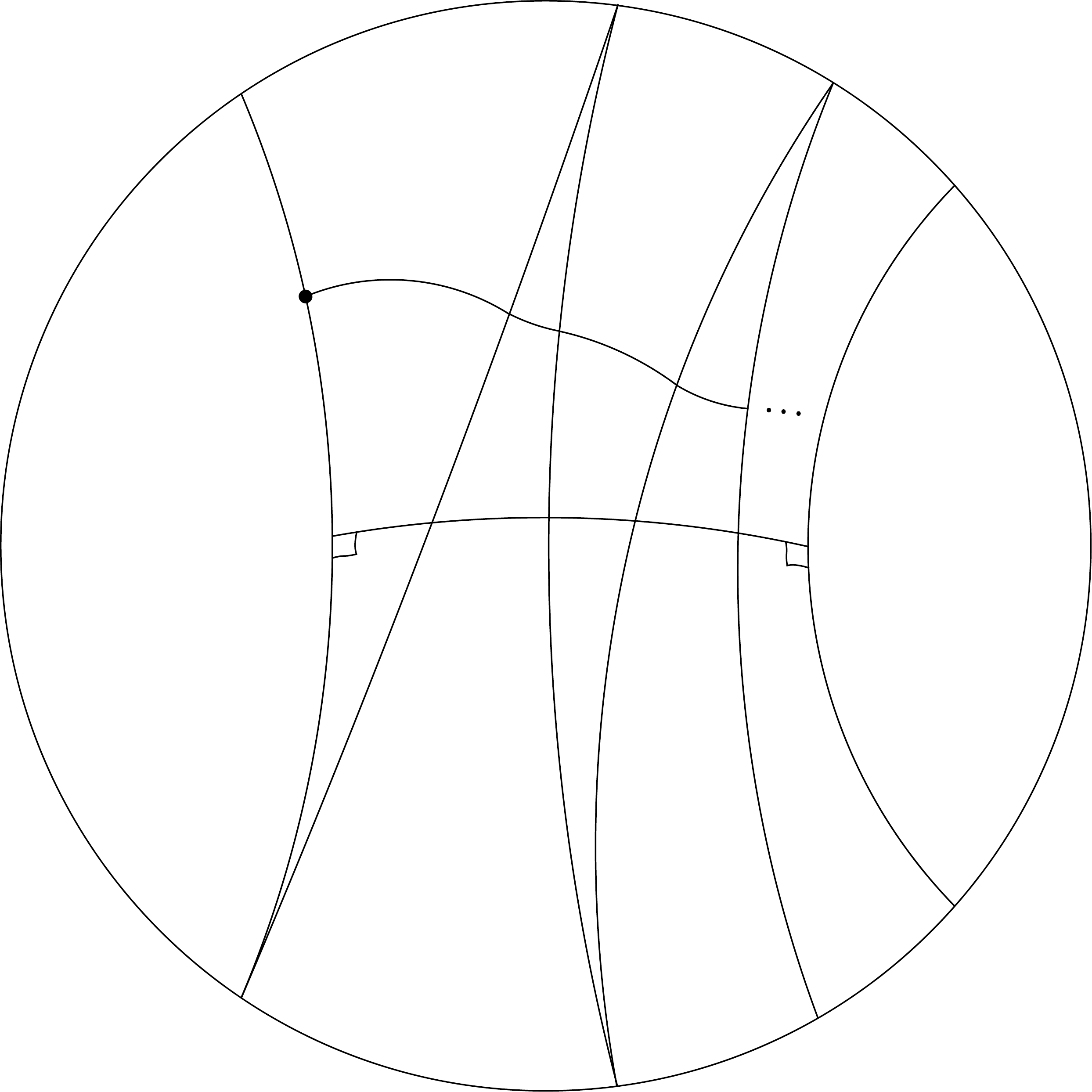,width=3.5in,height=3.5in,angle=0}}}
\vspace{-30pt}
\end{center}
\caption{The piecewise horocyclic arc $h$ and the orthogeodesic $a$.} 
\end{figure}

Denote by $W_n$ the wedge between $g_n$ and $g_{n+1}$. Let $a_n=a\cap W_n$ and $h_n=h\cap W_n$. Let $d_n$ be the geodesic arc on $g_n$ whose endpoints are $a\cap g_n$ and $h\cap g_n$. 

 \begin{figure}[h]
\leavevmode \SetLabels
\L(.33*.9) $g_n$\\
\L(.65*.7) $h_n$\\
\L(.8*.02) $g_{n+1}$\\
\L(.44*.4) $h_n'$\\
\L(.6*.15) $d_n$\\
\L(.32*.65) $d_n$\\
\L(.38*.145) $b_n$\\
\L(.302*.22) $a_n$\\
\L(.395*.3) $c_n$\\
\L(.39*.22) $\alpha_n$\\
\L(.356*.354) $\beta_n$\\
\endSetLabels
\begin{center}
\AffixLabels{\centerline{\epsfig{file =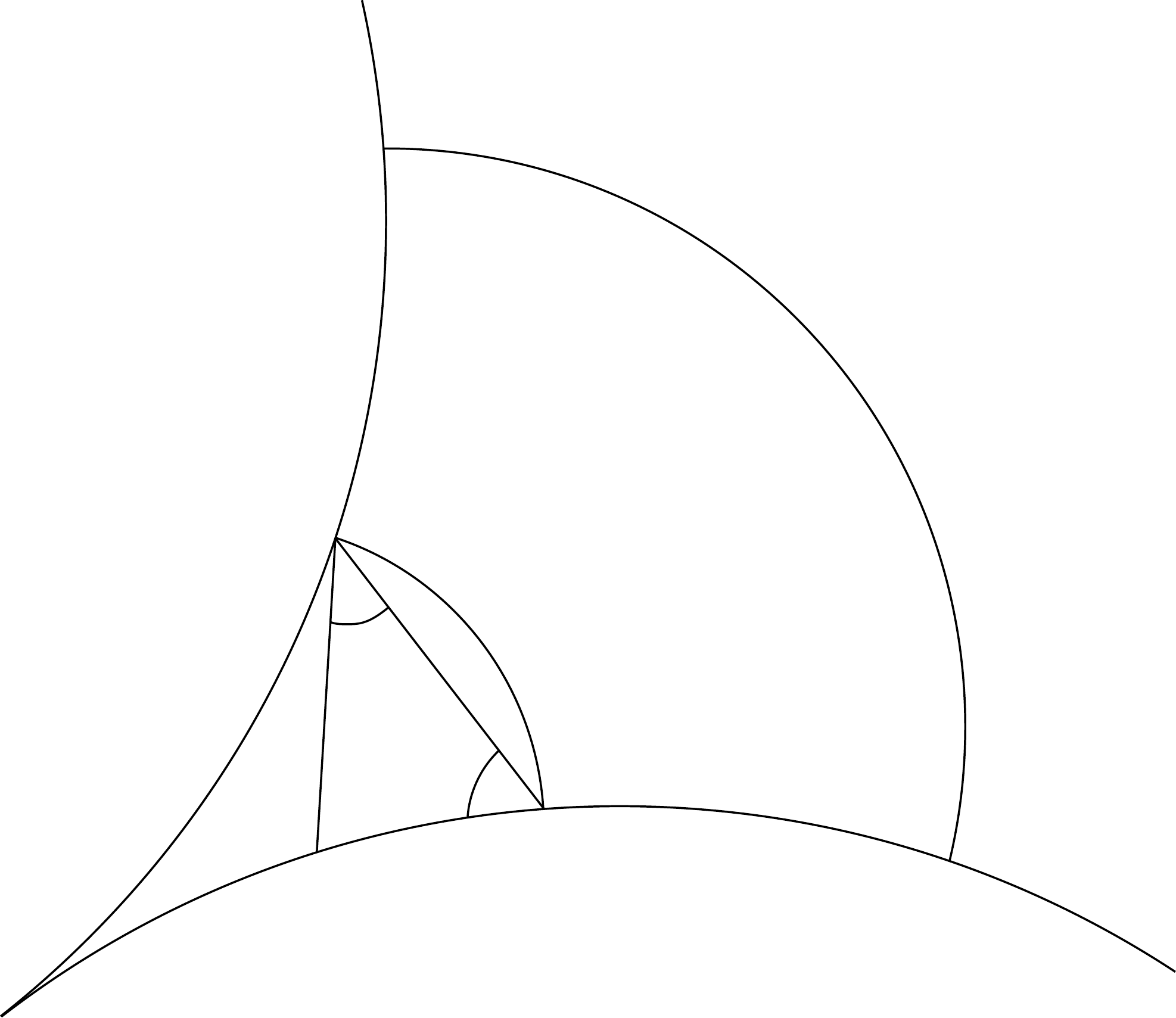,width=3.5in,height=3.5in,angle=0}}}
\vspace{-30pt}
\end{center}
\caption{The comparison between $a_n$ and $h_n$.} 
\end{figure}

Let $h_n'$ be the horocyclic arc in $W_n$ which starts at $a_n\cap g_n$. Let $b_n$ be the geodesic arc on $g_{n+1}$ between $a\cap g_{n+1}$ and $h_n'\cap g_{n+1}$. Let $c_n$ be the geodesic arc that connects the endpoints of $h_n'$. Let $\alpha_n$ be the angle facing $a_n$ and let $\beta_n$ be the angle facing $b_n$ in the geodesic triangle with sides $a_n$, $b_n$ and $c_n$ (see Figure 16). By the sine formula, we have
\begin{equation}
\label{thm:sine-law}
\frac{\sinh \ell(a_n)}{\sin\alpha_n}=\frac{\sinh \ell (b_n)}{\sin\beta_n}.
\end{equation}

We prove that $\alpha_n\in [\alpha_l,\alpha_u]$ for all $n$, where $0<\alpha_l<\alpha_u<\pi$. To do so, map the wedge $W_n\subset\mathbb{D}$ by a M\"obius map to the wedge in $\mathbb{H}$ with vertex $\infty$ whose boundary geodesics are $(0,\infty )$ and $(1,\infty )$ are images of $g_n$ and $g_{n+1}$, respectively. The images of $a_n$, $b_n$, $c_n$, $h_n'$  and $\alpha_n$ are denoted by the same letters (see Figure {16}). Since the length $\ell (a_n)$ is bounded above by $\ell (a)<\infty$ for all $n$, it follows that there is a positive lower bound $y_0>0$ on the heights of the points in $a_n$ for each $n$. The horocyclic arc $h_n'$ is on the Euclidean height at least $y_0>0$ and the Euclidean circle that contains the geodesic arc $c_n$ has center $1/2\in\mathbb{R}$ and radius greater than $y_0$. By elementary Euclidean geometry, these bounds imply the existence $0<\alpha_l<\alpha_u<\pi$ such  that $\alpha_n\in [\alpha_l,\alpha_u]$ for all $n$.

 \begin{figure}[h]
\leavevmode \SetLabels
\L(.5*.88) $c_n$\\
\L(.5*.72) $h_n$\\
\L(.5*.6) $a_n$\\
\L(.31*.03) $0$\\
\L(.473*.035) $\frac{1}{2}$\\
\L(.68*.03) $1$\\
\L(.59*.72) $\alpha_n$\\
\L(.68*.55) $b_n$\\
\endSetLabels
\begin{center}
\AffixLabels{\centerline{\epsfig{file =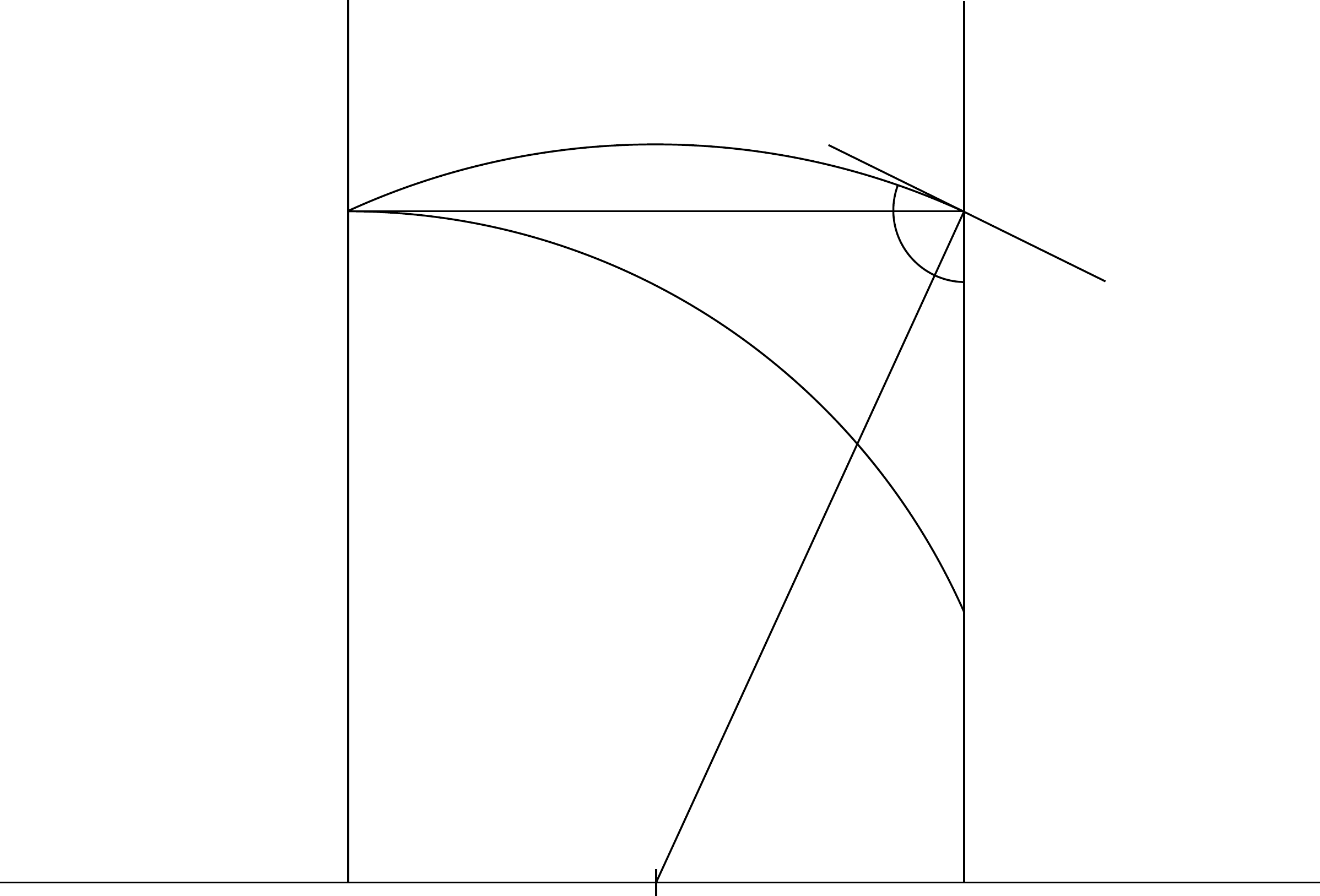,width=3.5in,height=3.5in,angle=0}}}
\vspace{-30pt}
\end{center}
\caption{The wedge with vertex $\infty$.} 
\end{figure}

By (\ref{thm:sine-law}) and $\alpha_n\in [\alpha_l,\alpha_u]$, we conclude that, for all $n$,
\begin{equation}
\label{eq:b_n-est}
\ell (b_n)\leq C\ell (a_n)
\end{equation}
for some constant $C$. By Figure 16 and equation (\ref{eq:b_n-est}), we have
$$
\ell (d_{n+1})=\ell (d_n)+\ell (b_n)\leq  \ell (d_n)+C\ell (a_n)
$$
which implies
\begin{equation}
\label{eq:dist-a_n-h_n}
\ell (d_{n+1})\leq \ell (d_1)+C\sum_{i=1}^n \ell (a_i)\leq \ell (d_1)+C\ell (a).
\end{equation}

Given quantities $A$ and $B$, the notation $A\asymp B$ means that there exists a constant $k>0$ such that $1/k\leq A/B\leq k$. 
By (\ref{eq:dist-a_n-h_n}), we have $\ell (h_n')\asymp\ell (h_n)$. By 
$\ell (c_n)\leq \ell (a_n)+\ell (b_n)\leq (1+C)\ell (a_n)\leq (1+C)\ell (a)$ and by the fact that $h_n'$ and $c_n$ share endpoints, we conclude that $\ell (h_n')\asymp\ell (c_n)$. Therefore
$$
\ell (h)=\sum_{n=1}^{\infty} \ell (h_n)\asymp\sum_{n=1}^{\infty}\ell (h_n')\asymp (1+C)\sum_{n=1}^{\infty}\ell (a_n)=(1+C)\ell (a)<\infty
$$
and the proposition is proved.
\end{proof}

We compute the length of the horocyclic arc $h_n$ in terms of the shears on the geodesics $\{ g_n\}_{n=1}^{\infty}$. 
Consider two wedges $W_1$ and $W_2$ that share a common boundary geodesic $g_2$ such that their vertices are the opposite ideal endpoints of $g_2$. Let $g_1$ and $g_3$ be the other boundary geodesics of $W_1$ and $W_2$, respectively. 
Orient $g_2$ such that $g_1$ is on its left. 

If $g_2$ shares the initial point with $g_1$ (and thus the terminal point with $g_3$) we say that the pair of wedges $(W_1,W_2)$ is {\it left-open} (see left side of Figure {18}). If $g_2$ shares the initial point with $g_3$ (and thus the terminal point with {$g_1$}) we say that the pair of wedges $(W_1,W_2)$ is {\it left-closed} (see right side of Figure {18}).

\begin{figure}[h]
\leavevmode \SetLabels
\L(.23*.7) $g_1$\\
\L(.33*.8) $g_2$\\
\L(.4*.5) $g_3$\\
\L(.58*.6) $g_1$\\
\L(.65*.5) $g_2$\\
\L(.75*.5) $g_3$\\
\endSetLabels
\begin{center}
\AffixLabels{\centerline{\epsfig{file =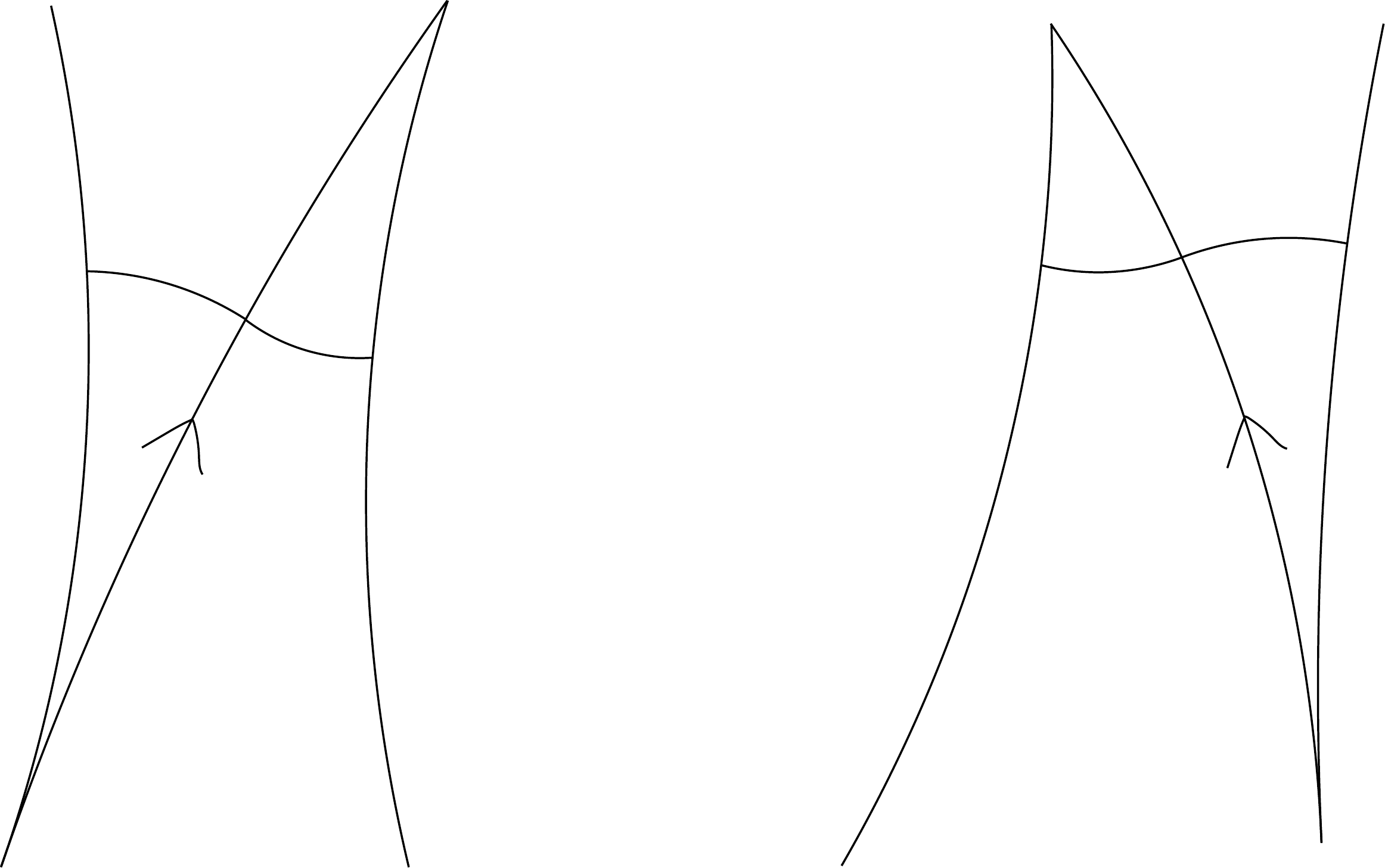,width=2.5in,height=2in,angle=0}}}
\vspace{-30pt}
\end{center}
\caption{The left-open and the left-closed pair of wedges.} 
\end{figure}

\vskip .2 cm

\noindent {\bf Lemma A.2.} {\it Let $(W_1,W_2)$ be a pair of adjacent wedges and let $h_1$ be a horocyclic arc orthogonal to and connecting the boundary sides of $W_1$. Let $s(g_2)$ be the shear along the common boundary geodesic $g_2$ of  
$(W_1,W_2)$ of the ideal quadrilateral with vertices equal to the endpoints of $g_1$ and $g_3$. Let $h_2$ be the horocyclic arc in $W_2$ orthogonal to its boundary that continues $h_1$.

If $(W_1,W_2)$ is left-open then
$$
\ell (h_2)=\frac{e^{s(g_2)}}{\ell (h_1)},
$$
where $\ell (h_1)$ and $\ell (h_2)$ are the lengths of $h_1$ and $h_2$.

If $(W_1,W_2)$ is left-closed then
$$
\ell (h_2)=\frac{e^{-s(g_2)}}{\ell (h_1)}.
$$}

\begin{proof}
Assume that $(W_1,W_2)$ is left-closed. Map $(W_1,W_2)$ by a M\"obius map into $\mathbb{H}$ such that the common geodesic is $g_2=(0,\infty )$ and the other vertex of $g_1$ is $-1$. Then necessarily we have $g_3=(0, e^{s(g_2)})$. The horocyclic arc $h_1$ is a horizontal Euclidean arc between $g_1$ and $g_2$ that meets $g_2$ (i.e. the $y$-axis) at a point $iy_1$ (see Figure 19).

\begin{figure}[h]
\leavevmode \SetLabels
\L(.23*.62) $-1$\\
\L(.42*.62) $0$\\
\L(.8*.62) $e^{s(g_2)}$\\
\L(.3*.97) $W_1$\\
\L(.45*.97) $W_2$\\
\L(.3*.85) $h_1$\\
\L(.438*.9) $h_2$\\
\L(.39*.905) $iy_1$\\
\L(.3*.3) $m(W_2)$\\
\L(.4*.15) $\frac{i}{y_1}$\\
\L(.45*.3) $m(W_1)$\\
\L(.135*.01) $-e^{-s(g_2)}$\\
\L(.83*.01) $1$\\
\L(.4*.01) $0$\\
\L(.5*.55) $m(z)=-\frac{1}{z}$\\
\endSetLabels
\begin{center}
\AffixLabels{\centerline{\epsfig{file =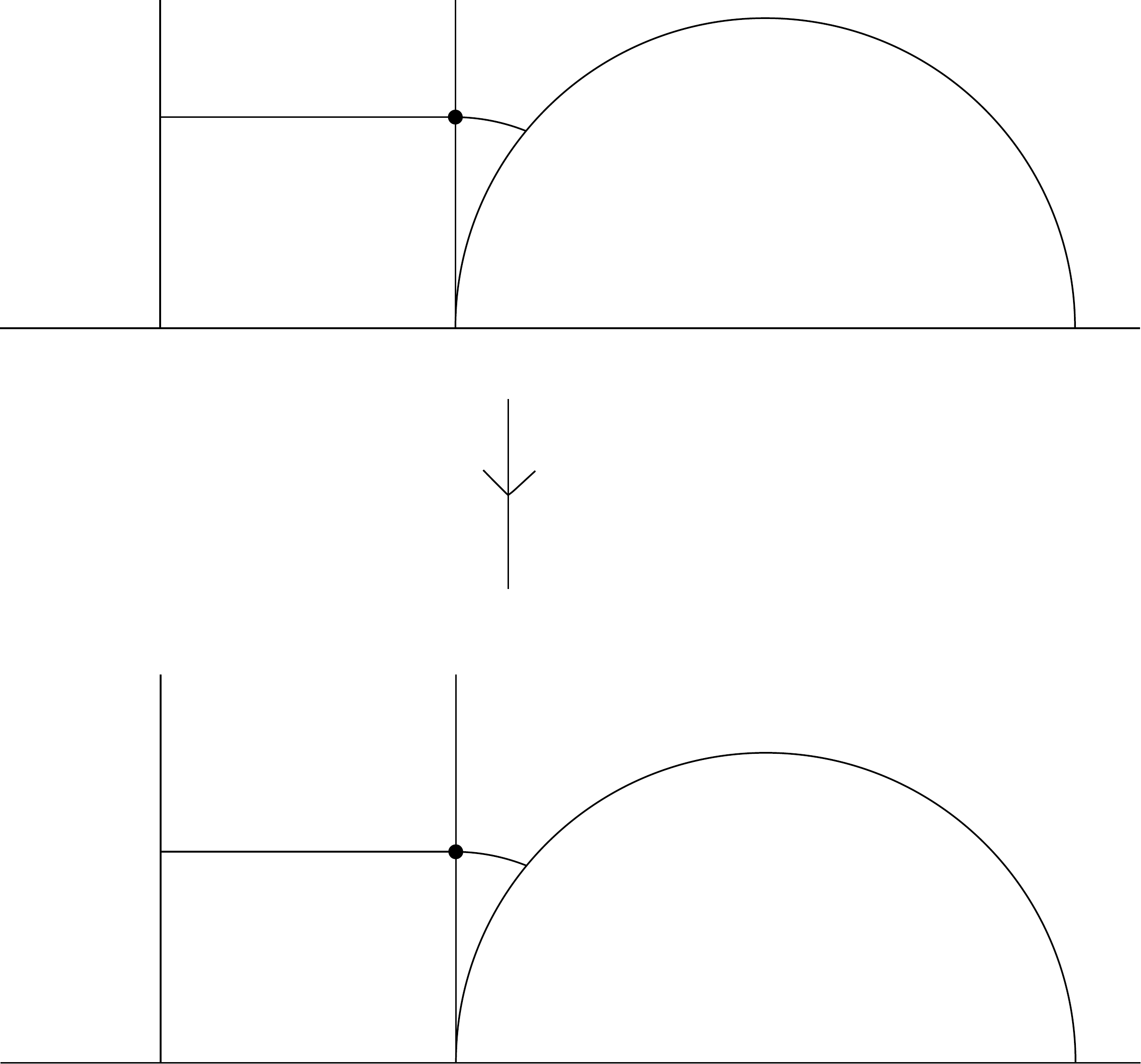,width=3.5in,height=3.5in,angle=0}}}
\vspace{-30pt}
\end{center}
\caption{The left-closed pair of wedges with the vertex $\infty$.} 
\end{figure}

We have that $\ell (h_1)=\frac{1}{y_1}$. Let $m(z)=-\frac{1}{z}$. Then $m(g_3)=(-e^{-s(g_2)},\infty )$ and $m(g_2)=g_2$. The horocyclic arc $m(h_2)$ is the horizontal Euclidean arc between $m(g_3)$ and $g_2$ with Euclidean height $\frac{1}{y_1}$. 
Therefore
$$
\ell (h_2)=\ell (m(h_2))=y_1e^{-s(g_2)}=\frac{e^{-s(g_2)}}{\ell (h_1)}.
$$

 The case when $(W_1,W_2)$ is left-open is dealt with in an analogous fashion. 
\end{proof}

We find the length of the $n$-th horocyclic arc $h_n$ based on the length of $h_1$ and the shears on the geodesics before $h_n$.
 
 \vskip .2 cm
 
\noindent {\bf Proposition A.3.} {\it
Let $\{ g_n\}_{n=1}^{\infty}$ be the above nested family of geodesics and let $s_n=s(g_n)$ be the corresponding shears. Let $h$ be a curve obtained by concatenation of horocyclic arcs $h_n$ orthogonal to and connecting two boundary geodesics of each wedge $W_n$ starting at a point $P_1\in g_1$. Choose $P_1\in g_1$ such that the horocyclic arc $h_1=h\cap W_1$ has length $e^{-s_1}$. Then, for $n$ odd,
$$
\ell (h_n)=e^{-s_1-s_2-\cdots -s_n}
$$
and, for $n$ even,
$$
\ell (h_n)=e^{s_1+s_2+\cdots +s_n}
$$
}

\begin{proof}
Consider the wedge $W_n$ for $n>1$. By Lemma A.2, we get  $\ell (h_2)=\frac{e^{s_2}}{\ell (h_1)}=e^{s_1+s_2}$. Applying Lemma A.2 again, we get $\ell (h_3)=\frac{e^{-s_3}}{\ell (h_2)}=\frac{e^{-s_3}}{e^{s_1+s_2}}=e^{-(s_1+s_2+s_3)}$, and so on. The result follows by induction.
\end{proof}

\begin{figure}[h]
\leavevmode \SetLabels
\endSetLabels
\begin{center}
\AffixLabels{\centerline{\epsfig{file =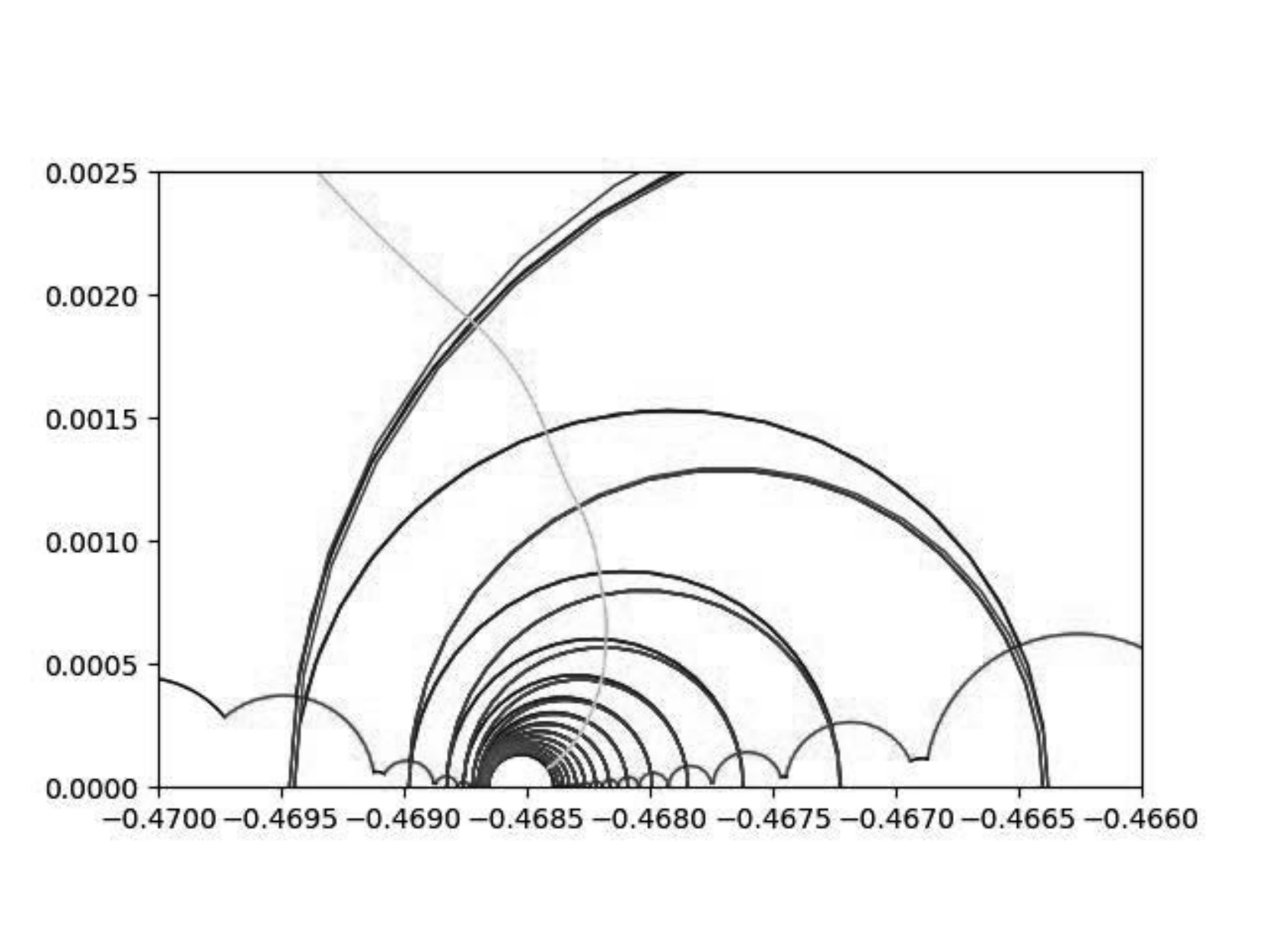,width=4in,height=2.5in,angle=0}}}
\vspace{-30pt}
\end{center}
\caption{A computer generated path of horocyclic concatenations for the Loch-Ness monster surface $X_{a,b}$ with $a=.5$ and $b=6.2$.} 
\end{figure}

\vskip .2 cm

\end{document}